\newtheorem{theorem}{Theorem}[subsection]
\newtheorem{lemma}[theorem]{Lemma}
\newtheorem{corollary}[theorem]{Corollary}
\newtheorem{proposition}[theorem]{Proposition}
\theoremstyle{definition}
\newtheorem{definition}[theorem]{Definition}
\newtheorem{example}[theorem]{Example}
\newtheorem{remark}[theorem]{Remark}
\numberwithin{equation}{section}
\newcommand{\mat}[1]{\ensuremath{
\left[\begin{matrix}#1
\end{matrix}\right]
}}
\def\xrarrow{\xrightarrow} 
\def\-{\text{-}}
 \newcommand{\into}{\hookrightarrow}
 \newcommand{\onto}{\twoheadrightarrow}
 \newcommand{\cof}{\rightarrowtail}
\def\<{\left<}
\def\>{\right>}
\newcommand{\Hom}{\ensuremath{\rm Hom}}%
\newcommand{\End}{\ensuremath{\rm End}}%
\newcommand{\ZZ}{{\ensuremath{\mathbb{Z}}}}
\newcommand{\RR}{{\ensuremath{\mathbb{R}}}}
\newcommand{\NN}{{{\mathbb{N}}}}
\newcommand{\commentout}[1]{}
\newcommand{\cA}{\ensuremath{{\mathcal{A}}}}
\newcommand{\cC}{\ensuremath{{\mathcal{C}}}}
\newcommand{\cD}{\ensuremath{{\mathcal{D}}}}
\newcommand{\cF}{\ensuremath{{\mathcal{F}}}}
\newcommand{\cL}{\ensuremath{{\mathcal{L}}}}
\newcommand{\cM}{\ensuremath{{\mathcal{M}}}}
\newcommand{\cP}{\ensuremath{{\mathcal{P}}}}
\newcommand{\cR}{\ensuremath{{\mathcal{R}}}}
\newcommand{\cT}{\ensuremath{{\mathcal{T}}}}
\def\a{\alpha}
\def\b{\beta}
\def\g{\gamma}
\def\e{\epsilon}
\def\f{\varphi}
\def\k{\kappa}
\def\Sig{\Sigma}
\def\th{\theta}
\title{Continuous Frobenius Categories}
\dedicatory{Dedicated to the memory of Dieter Happel} 
\author{Kiyoshi Igusa}
\address{Department of Mathematics, Brandeis University, Waltham, MA 02454}\email{igusa@brandeis.edu}
\author{Gordana Todorov}
\address{Department of Mathematics, Northeastern University, Boston, MA 02115}\email{g.todorov@neu.edu}
\thanks{The first author is supported by NSA Grant 111015.}
\begin{document}


\begin{abstract}{We introduce continuous Frobenius categories. These are topological categories which are constructed using representations of the circle over a discrete valuation ring. We show that they are Krull-Schmidt with one indecomposable object for each pair of (not necessarily distinct) points on the circle. By putting restrictions on these points we obtain various Frobenius subcategories. The main purpose of constructing these Frobenius categories is to give a precise and elementary description of the triangulated structure of their stable categories. We show in \cite{IT10} for which parameters these stable categories have cluster structure in the sense of \cite{BIRSc} and we call these \emph{continuous cluster categories}.}
\end{abstract}

\maketitle

\section*{Introduction}\label{sec:0}

The standard construction of a cluster category of a hereditary algebra is to take the orbit category of the derived category of bounded complexes of finitely generated modules over the algebra:
\[
	\cC_H\cong\cD^b(mod\,H)/F
\]
where $F$ is a triangulated autoequivalence of $\cD^b(mod\, H)$ \cite{BMRRT}. In this paper we construct continuous versions of the cluster categories of type $A_n$. These \emph{continuous cluster categories} are  \emph{continuously triangulated categories} (Sec \ref{sec0: TopCat}) having uncountably many indecomposable objects and containing the finite and countable cluster categories of type $A_n$ and $A_\infty$ as subquotients. Cluster categories of type $A_n$ and $A_\infty$ were also studied in \cite{CCS}, \cite{HJ12},  \cite{Ng}.

The reason for the term \emph{continuous} in the names of the categories is the fact that the categories that we define and consider in this paper are either topological categories, or equivalent to topological categories with continuous structure maps (Section \ref{sec0: TopCat}). The continuity requirement implies that there are two possible topologically inequivalent triangulations of the continuous cluster category given by the two 2-fold covering spaces of the Moebius band: connected and disconnected. We choose the first case (Definition \ref{defn: tilde Ind Ftop}) but we also discuss the second case (Remark \ref{another two fold cover}). These two topological categories are algebraically equivalent as triangulated categories by \cite{KellerReiten}.

The term \emph{cluster} in the names of the categories is justified in \cite{IT10} where it is shown that the category $\cC_\pi$ has a cluster structure where cluster mutation is given using the triangulated structure (see the beginning of Section \ref{sec3:CCC}) and that the categories $\cC_c$ also have a cluster structure for specific values of $c$. For the categories  $\cC_\phi$, we have partial results ($\cC_\phi$ has an $m$-cluster structure in certain cases). This paper is the first in a series of papers. The main purpose of this paper is to give a concrete and self-contained description of the triangulated structures of these continuous cluster categories being developed in concurrently written papers \cite{IT10,IT1x}.

We will use representations of the circle over a discrete valuation ring $R$ to construct  continuous Frobenius $R$-categories $\cF_\pi$, $\cF_c$ and $\cF_\phi$ whose stable categories (triangulated categories by a well-known construction of Happel \cite{HappelBook}) are equivalent to the continuous  categories $\cC_\pi$, $\cC_c$ and $\cC_\phi$, thus inducing continuous triangulated structure on these topological $K$-categories ($K=R/\mathfrak m$). 

In Section \ref{sec0: TopCat} we review the basic definition of a topological $R$-category and the topological additive category that it generates.

In Section \ref{RepCirc} we define representations of the circle; a representation of the circle $S^1=\RR/2\pi\ZZ$ over $R$ is defined  to be collection of  $R$-modules $V[x]$ at every point $x\in S^1$ and  morphisms $V[x]\to V[y]$ associated to any clockwise rotation from $x$ to $y$ with the property that rotation by $2\pi$ is multiplication by the uniformizer $t$ of the ring $R$. 
We denote the projective representations  generated at points $x$ by $P_{[x]}$. 

The Frobenius category $\cF_\pi$ is defined in Section \ref{sec: Frobenius categories}: the objects are $(V,d)$ where $V$ is a finitely generated projective representation of $S^1$ over $R$ and $d$ is an endomorphism of $V$ with square equal to multiplication by $t$.
We show that $\cF_\pi$ is a Frobenius category which has, up to isomorphism, one indecomposable object
\[
E(x,y)=\left( P_{[x]}\oplus P_{[y]}, \  d=\small\left[\begin{matrix}
0 & \b_\ast\\ \a_\ast & 0
\end{matrix}\right]\right)
\]
for every pair of points $0\le x\le y<2\pi$ in $S^1$. (See Definition \ref{defn of E(x,y)}.) The projective-injective objects are $E(x,x)$ (i.e. when $x=y$). The stable category of $\cF_\pi$ is shown to be equivalent to the continuous category $\cC_\pi$, which is defined in \ref{defn: continuous cluster category}. This construction also works in much greater generality (Proposition \ref{prop: general Frobenius category}).

We also consider $\cF_c$ for any positive real number $c\le\pi$ in \ref{cFc};
$\cF_c$ is defined to be the additive full subcategory of $\cF_\pi$ generated by all $E(x,y)$ where the distance from $x$ to $y$ is at least $\pi-c$. Objects  in $\cF_c$ are projective-injective if and only if they attain this minimum distance. The stable category is again triangulated and equivalent to the continuous category $\cC_c$ which has a cluster structure if and only if $c=(n+1)\pi/(n+3)$ for some $n\in\ZZ_{>0}$  \cite{IT10}. In that case we show (in \cite{IT10}) that $\cC_c$ contains a thick subcategory equivalent to the cluster category of type $A_n$. 

The most general version of Frobenius categories that we consider in this paper, are the categories $\cF_\phi$, for homeomorphisms $\phi: S^1\to S^1$ satisfying ``orientation preserving" and some other conditions (see \ref{def: Phi and F-Phi}). The categories $\cF_c$, and in particular $\cF_\pi$, are special cases of $\cF_\phi$.

In Section \ref{sec3:CCC} we define the topological Frobenius category $\cF_\pi^{top}$, $\cF_c^{top}$ and $\cF_\phi^{top}$ which are algebraically equivalent to $\cF_\pi, \cF_c,\cF_\phi$ and given by choosing two objects from every isomorphism class of indecomposable objects. The continuous cluster category $\cC_\pi$ is shown to be isomorphic to be a quotient category of $\cF_\pi^{top}$ and we give it the quotient topology.

In later papers we will develop other properties of these continuous cluster categories. We will give recognition principles for (the morphisms in) distinguished triangles in the continuous cluster categories and other continuous categories. An example of this is given in \ref{eg: distinguished triangle}. We will also show in later papers that the continuous cluster category $\cC_\pi$ has a unique cluster up to isomorphism and we will find conditions to make the cluster character into a continuous function. And we will show in later papers how this construction can be modified to produce continuous Frobenius categories of type $D$.

The first author would like to thank Maurice Auslander for explaining to him that ``The Krull-Schmidt Theorem is a statement about endomorphism rings [of objects in a category]''. This observation will be used many times. Also, Maurice told us that each paper should have only one main result. So, our other results will appear in separate papers. We also thank Adam-Christiaan van Roosmalen for explaining his work to us. Finally, the second author wishes to thank the Mathematical Sciences Research Institute (MSRI) in Berkeley, CA for providing support and wonderful working conditions during the final stages of writing this paper.

\setcounter{section}{-1}

\section{Some remarks on topological $R$-categories}\label{sec0: TopCat} We recall the definition of a topological category since our constructions are motivated by our desire to construct continuously triangulated topological categories of type $A$. 
By a ``continuously triangulated" category we mean a topological category which is also triangulated so that the defining equivalence $\Sig$ of the triangulated category is a continuous functor. We also review an easy method for defining the topology on an additive category out of the topology of a full subcategory of indecomposable objects.

Recall that a topological ring is a ring $R$ together with a topology on $R$ so that its structure maps are continuous. Thus addition $+:R\times R\to R$ and multiplication $\cdot:R\times R\to R$ are required to be continuous mappings. A \emph{topological $R$-module} is an $R$-module $M$ together with a topology on $M$ so that the structure maps $m:R\times M\to M$ and $a:M\times M\to M$ given by $m(r,x)= rx$ and $a(x,y)=x+y$ are continuous mappings.

\begin{definition} If $R$ is a topological ring, a \emph{topological $R$-category} is defined to be a small $R$-category $\cC$ together with a topology on the set of objects $Ob(\cC)$ and on the set of all morphisms $Mor(\cC)$ so that the structure maps of $\cC$ are continuous mappings. Thus $s,t,id,a,m,c$ are continuous where
\begin{enumerate}
\item $s,t:Mor(\cC)\to Ob(\cC)$ are the source and target maps.
\item $m:R\times Mor(\cC)\to Mor(\cC)$, $a:A\to Mor(\cC)$ are the mappings which give the $R$-module structure on each hom set $\cC(X,Y)\subset Mor(\cC)$. Here $A$ is the subset of $Mor(\cC)^2$ consisting of pairs $(f,g)$ of morphisms with the same source and target.
\item $id:Ob(\cC)\to Mor(\cC)$ is the mapping which sends each $X\in Ob(\cC)$ to $id_X\in \cC(X,X)\subseteq Mor(\cC)$.
\item $c:B\to Mor(\cC)$ is composition where $B$ is the subset of $Mor(\cC)^2$ consisting of pairs $(f,g)$ where $s(f)=t(g)$.
\end{enumerate}
\end{definition}

We say that a functor $F:\cC\to\cD$ between topological categories $\cC,\cD$ is \emph{continuous} if it is continuous on objects and morphisms. Thus, we require $Ob(F):Ob(\cC)\to Ob(\cD)$ and $Mor(F):Mor(\cC)\to Mor(\cD)$ to be continuous mappings. When $\cC,\cD$ are topological $R$-categories, we usually assume that $F$ is \emph{$R$-linear} in the sense that the induced mappings $\cC(X,Y)\to \cD(FX,FY)$ are homomorphisms of $R$-modules for all $X,Y\in Ob(\cC)$. 

In this paper we will construct Krull-Schmidt categories $\cC$ each of which has a natural topology on the full subcategory $\cD=Ind\,\cC$ of carefully chosen representatives of the indecomposable objects. By the following construction, we obtain a small topological category $add\,\cD$ which is equivalent as an additive category to the entire category $\cC$.

\begin{definition}\label{topology of add D} A topological $R$-category $\cD$ is called \emph{additive} if there is a continuous functor $\oplus:\cD\times\cD\to\cD$ which is algebraically a direct sum operation. ($\cD\times\cD$ is given the product topology on object and morphism sets.) 

Suppose $\cD$ is a topological $R$-category. Then we define the \emph{additive category} $add\,\cD$ generated by $\cD$ to be the category of formal ordered direct sums of objects in $\cD$. Thus the object space of $add\, \cD$ is the disjoint union
\[
	Ob(add\,\cD)=\bigsqcup_{n\ge0}Ob(\cD)^n.
\]
When $n=0$, $Ob(\cD)^0$ consists of a single object which we call the \emph{distinguished zero object} of $add\,\cD$. This is a topological space since it is the disjoint union of Cartesian products of topological spaces. We write the object $(X_i)$ as the ordered sum $\bigoplus_i X_i$. The morphism space is defined analogously:
\[
	Mor(add\,\cD)=\bigsqcup_{n,m\ge0} \{((X_j),(f_{ij}),(Y_i))\in Ob(\cD)^m\times Mor(\cD)^{nm}\times Ob(\cD)^n\ |\ f_{ij}\in \cD(X_j,Y_i)\}
\]
This has the topology of a disjoint union of subspaces of Cartesian products of topological spaces. 
\end{definition}

\begin{proposition}
The category $add\,\cD$ is a topological additive $R$-category in which direct sum $\oplus$ is strictly associative and has a strict unit.
\end{proposition}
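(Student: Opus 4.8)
The plan is to verify directly that $add\,\cD$, with the object and morphism spaces defined above, satisfies all the requirements of Definition \ref{topology of add D}: it is a topological $R$-category, the structure maps are continuous, and the direct sum operation is a continuous functor which is strictly associative and strictly unital.

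First I would define the structure maps explicitly. The source and target maps $s,t\colon Mor(add\,\cD)\to Ob(add\,\cD)$ send $((X_j),(f_{ij}),(Y_i))$ to $(X_j)$ and $(Y_i)$ respectively; these are projections from a product onto factors, hence continuous, and they respect the disjoint-union decomposition indexed by $(n,m)$. The identity map $id$ sends $(X_i)\in Ob(\cD)^n$ to $((X_i),(\delta_{ij}\,id_{X_i}),(X_i))$, which is continuous because $id\colon Ob(\cD)\to Mor(\cD)$ is continuous and the off-diagonal entries are the constant zero morphisms (zero morphisms vary continuously since $m(0,-)$ is continuous). For the $R$-module structure $m$ and $a$ on each hom set, note that $add\,\cD(\bigoplus X_j,\bigoplus Y_i)$ is the product $\prod_{i,j}\cD(X_j,Y_i)$ restricted to fixed tuples $(X_j),(Y_i)$; scalar multiplication and addition are performed entrywise, so they are continuous because they are continuous in $\cD$ and a finite product of continuous maps is continuous. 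Composition $c$ on the subspace $B$ is given by the matrix product formula $(g\circ f)_{ik}=\sum_j g_{ij}f_{jk}$, a finite sum of composites of morphisms in $\cD$; since composition and addition in $\cD$ are continuous and the index $n$ (the number of middle summands) is locally constant on $B$, the matrix-product formula is continuous on each piece of the disjoint union, hence continuous on $B$. The category axioms (associativity of composition, unit laws) are the usual verification that formal matrices over the hom sets of $\cD$ form a category, which I would state holds because $\cD$ is an $R$-category.

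Next I would treat the additive structure. Define $\oplus\colon add\,\cD\times add\,\cD\to add\,\cD$ on objects by concatenation of tuples, $(X_1,\dots,X_n)\oplus(X_1',\dots,X_m')=(X_1,\dots,X_n,X_1',\dots,X_m')$, and on morphisms by the block-diagonal matrix. Concatenation is a homeomorphism $Ob(\cD)^n\times Ob(\cD)^m\to Ob(\cD)^{n+m}$ onto one clopen piece, so $\oplus$ is continuous on objects; on morphisms it is, piecewise, the map that places the two matrix blocks into a larger matrix and fills the rest with zero morphisms, which is continuous for the same reason as the identity map. That this is algebraically a direct sum — i.e. that the concatenated object together with the evident inclusions and projections satisfies the universal property of the biproduct in $add\,\cD$ — follows from the matrix description of morphisms. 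Finally, strict associativity of $\oplus$ is immediate because concatenation of finite ordered tuples is strictly associative (both $(X\oplus Y)\oplus Z$ and $X\oplus(Y\oplus Z)$ are the single tuple obtained by listing all entries in order), and the distinguished zero object $Ob(\cD)^0$ is a strict two-sided unit because concatenating with the empty tuple changes nothing.

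The only genuinely delicate point — and the one I would write most carefully — is the continuity of composition $c$: one must check that the decomposition of $B\subseteq Mor(add\,\cD)^2$ according to the three integers $(\ell,m,n)$ recording the sizes of the three object-tuples involved is a decomposition into open-and-closed subsets, so that it suffices to check continuity of the matrix-multiplication formula separately on each such piece where all index ranges are fixed; there the formula is a finite composite of the continuous maps $c$ and $a$ of $\cD$ together with coordinate projections, hence continuous. Everything else is a routine but somewhat lengthy unwinding of the disjoint-union/product topologies, which I would summarize rather than belabor.
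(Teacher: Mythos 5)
Your proof is correct and follows essentially the same route as the paper's own (much terser) argument, which simply notes that strict associativity and the strict unit are immediate from the definition of objects as ordered tuples with the empty tuple as zero object, and that continuity of the structure maps of $add\,\cD$ — e.g.\ composition as a sum of composites of morphisms in $\cD$ — follows from continuity of the structure maps of $\cD$. Your write-up fills in the same verification in considerably more detail than the paper does.
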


\begin{proof}
Direct sum is strictly associative: $(A\oplus B)\oplus C=A\oplus (B\oplus C)$ since objects in $add\,\cD$ are, by definition, equal to ordered direct sums of objects in $\cD$. The distinguished zero object is a strict unit for $\oplus$ since it is the empty sum: $0 \oplus X=X=X\oplus 0$ for all $X$. The fact that $add\,\cD$ is a topological category follows easily from the assumption that $\cD$ is a topological category. For example, composition of morphisms in $add\,\cD$ is given by addition of composites of morphisms in $\cD$ and both of these operations are continuous.
\end{proof}

\section{Representations of the circle $S^1$}\label{RepCirc}
In this section we describe the category of representations of the circle over a discrete valuation ring. Special kinds of finitely generated projective representations  of the circle will be used in section 2 in order to define Frobenius categories.
Let $R$ be a discrete valuation ring with uniformizing parameter $t$ (a fixed generator of the unique maximal ideal $\mathfrak m$), valuation $\nu:R\to\NN$ and quotient field $K=R/\mathfrak m=R/(t)$.

\subsection{{Representations of $S^1$}}
Let $S^1=\RR/2\pi\ZZ$. Let  $x\in \RR$ and let $[x]$ denote the corresponding element $[x]=x+2\pi\ZZ$ in $S^1$. When we take an element $[x]\in S^1$ we mean: choose an element of $S^1$ and choose an arbitrary representative $x\in\RR$ of this element. We now define the category of $R$-representations of $S^1$. We denote this category $\cR_{S^1}$.

\begin{definition}
A \emph{representation} $V$ of $S^1$ over $R$ is defined to be:
\begin{enumerate}
\item[(a)] an $R$-module $V[x]$ for every $[x]\in S^1$ and
\item[(b)] an $R$-linear map $V^{(x,\a)}: V[x]\to V[{x-\a}]$ for all $[x]\in S^1$ and $\a\in \RR_{\ge0}$ 
 satisfying the following conditions for all $[x]\in S^1$:
\begin{enumerate}
\item [(1)] $V^{(x-\b,\a)}\circ V^{(x,\b)}=V^{(x,\a+\b)}$ for all $\a,\b\in\RR_{\ge0}$,
\item [(2)] $V^{(x,2\pi n)}:V[x]\to V[x]$ is multiplication by $t^n$ for all  $n\in\NN$.
\end{enumerate}\end{enumerate}
\end{definition}

\begin{definition} A \emph{morphism} $f:V\to W$ consists of  $R$-linear maps $f_{[x]}:V[x]\to W[x]$ for all $[x]\in S^1$ so that $W^{(x,\a)} f_{[x]}=f_{[x-\a]}V^{(x,\a)}$ for all $[x]\in S^1$ and $\a\ge0$, i.e.,
\[
\xymatrix{
V[x]\ar[d]^{V^{(x,\a)}}\ar[r]^{f_{[x]}} & W[x]\ar[d]^{W^{(x,\a)}}\\
V[x-\a]\ar[r]^{f_{[x-\a]}}&W[x-\a]	.
}
\]
A morphism $f:V\to W$ is called a \emph{monomorphism} or \emph{epimorphism} if $f_{[x]}:V[x]\to W[x]$ are  monomorphisms or epimorphisms, respectively, for all $[x]\in S^1$.
\end{definition}

\begin{definition}
Let $P_{[x]}$, for  $[x]\in S^1$, be the representation of $S^1$ defined as:
\begin{enumerate}
\item[(a)] $R$-module $P_{[x]}[x-\a]:=Re_x^\a$, the free $R$-module on one  generator $e_x^\a$ for each real number $0\le\a<2\pi$. We extend this notation to $\a\ge2\pi$ by $e_x^{\g+2\pi n}:=t^ne_x^\g\in P_{[x]}[x-\g]$ for $n\in\NN$, and $\g\in\RR_{\ge0}$.
\item[(b)] $R$-homomorphism $P_{[x]}^{(x-\a,\b)}:P_{[x]}[x-\a]\to P_{[x]}[x-\a-\b]$ is the unique $R$-linear map defined by $P_{[x]}^{(x-\a,\b)}(e_{x}^\a)=e_{x}^{\a+\b}$ for all $\b\in\RR_{\ge0}$.
\end{enumerate}
\end{definition}

\begin{remark} It follows from the definition  that $e_x^0$ is a generator of the representation  $P_{[x]}$;  we will often denote this generator by $e_x$.
\end{remark}
\begin{proposition}
Let $V$ be an $R$-representation of $S^1$. There is a natural isomorphism 
\[
\cR_{S^1}(P_{[x]},V)\cong V[x]
\]
given by sending $f:P_{[x]}\to V$ to $f_{[x]}(e_x)\in V[x]$. In particular the ring homomorphism $R\to\End(P_{[x]})\cong P_{[x]}[x]\cong R$ sending $r\in R$ to multiplication by $r$ is an isomorphism.
\end{proposition}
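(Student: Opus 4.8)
The plan is to construct the natural isomorphism $\cR_{S^1}(P_{[x]},V)\cong V[x]$ explicitly by exhibiting a two-sided inverse to the evaluation map $\mathrm{ev}: f\mapsto f_{[x]}(e_x)$, and then to deduce the statement about $\End(P_{[x]})$ by taking $V=P_{[x]}$. First I would check that $\mathrm{ev}$ is a well-defined $R$-linear map: it is visibly $R$-linear since the $R$-module structure on $\cR_{S^1}(P_{[x]},V)$ is pointwise, and $f_{[x]}(e_x)$ genuinely lies in $V[x]$ because $f_{[x]}: P_{[x]}[x]\to V[x]$ and $P_{[x]}[x]=Re_x$. Naturality in $V$ is immediate from the definitions of morphisms of representations, so the only real content is bijectivity.

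For injectivity, suppose $f_{[x]}(e_x)=0$; I must show $f=0$, i.e. $f_{[x-\a]}=0$ for all $\a\ge 0$. Since $P_{[x]}[x-\a]=Re_x^\a$ is free on $e_x^\a$, it suffices to show $f_{[x-\a]}(e_x^\a)=0$. But $e_x^\a = P_{[x]}^{(x,\a)}(e_x)$ by the defining formula for $P_{[x]}$, and the commuting square in the definition of a morphism gives $f_{[x-\a]}(P_{[x]}^{(x,\a)}(e_x)) = V^{(x,\a)}(f_{[x]}(e_x)) = V^{(x,\a)}(0)=0$. For surjectivity, given $v\in V[x]$, I would define $f_{[x-\a]}: P_{[x]}[x-\a]\to V[x-\a]$ on the generator by $f_{[x-\a]}(e_x^\a):= V^{(x,\a)}(v)$ and extend $R$-linearly; this is forced by the argument just given, so there is no choice. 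Then I must verify three things: (i) this is well-defined with respect to the identification $e_x^{\g+2\pi n}=t^n e_x^\g$ — here one uses condition (2), $V^{(x,\g+2\pi n)} = t^n V^{(x,\g)}$, together with $R$-linearity of $V^{(x,\g)}$, so that $f$ sends $e_x^{\g+2\pi n}=t^n e_x^\g$ to $t^n V^{(x,\g)}(v)=V^{(x,\g+2\pi n)}(v)$, consistently; (ii) the collection $(f_{[x-\a]})$ satisfies the morphism compatibility $W^{(\cdot)}f = fV^{(\cdot)}$ — this follows from composition law (1) for $V$, namely $V^{(x-\a,\b)}\circ V^{(x,\a)} = V^{(x,\a+\b)}$, matched against $P_{[x]}^{(x-\a,\b)}(e_x^\a)=e_x^{\a+\b}$; and (iii) $\mathrm{ev}(f)=f_{[x]}(e_x)=V^{(x,0)}(v)=v$, using that $V^{(x,0)}=\mathrm{id}$ (the case $n=0$ of condition (2), or $\a=\b=0$ in (1)). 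The main obstacle, such as it is, is the bookkeeping in (i): one has to be careful that the notation $e_x^\a$ for $\a\ge 2\pi$ is a definition, not an independent generator, so well-definedness of $f$ is a genuine (if short) check rather than a triviality.

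Finally, for the last sentence: apply the isomorphism with $V=P_{[x]}$ to get $\End(P_{[x]}) = \cR_{S^1}(P_{[x]},P_{[x]})\cong P_{[x]}[x] = Re_x\cong R$. Under this chain of identifications, the composite $R\to\End(P_{[x]})$ sending $r$ to multiplication by $r$ corresponds to $r\mapsto r e_x\in Re_x$, which is the standard isomorphism $R\xrightarrow{\sim} Re_x$; hence it is an isomorphism. (One should note that the isomorphism $\cR_{S^1}(P_{[x]},V)\cong V[x]$ is an isomorphism of $R$-modules, and in the endomorphism case it is even a ring isomorphism once $\End(P_{[x]})$ is matched with $R$ via $P_{[x]}[x]\cong R$, because composition of endomorphisms corresponds to multiplication in $R$ — this can be checked directly on the generator $e_x$.)
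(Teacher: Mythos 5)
Your proposal is correct and follows essentially the same route as the paper: both arguments rest on the observation that a morphism $P_{[x]}\to V$ is uniquely determined by the image of the generator $e_x$, and both construct the inverse map $v\mapsto \bigl(re_x^\a\mapsto V^{(x,\a)}(rv)\bigr)$ explicitly. Your write-up is somewhat more detailed on the well-definedness checks (the identification $e_x^{\g+2\pi n}=t^ne_x^\g$ and the compatibility squares), which the paper leaves implicit, but the substance is identical.
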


\begin{proof} Define a homomorphism $\f:  V[x] \to \cR_{S^1}(P_{[x]},V)$ in the following way. For every $v\in V[x]$ let $\f(v)\in \cR_{S^1}(P_{[x]},V)$ be given by $\f(v)_{[x-\a]}(re_x^\a):= V^{(x,\a)}(rv)\in V[x-\a]$ for all $0\le\a<2\pi$. Then $\f(v)$ is the unique morphism $P_{[x]}\to V$ such that $\f(v)(e_x)=v$. In particular, $\f(f_{[x]}(e_x))(e_x)=f_{[x]}(e_x))=f(e_x)$. Therefore $\f(f_{[x]}(e_x))=f$ since both morphisms send the generator $e_x\in P_{[x]}[x]$ to $f_{[x]}(e_x)$. Therefore, $\f$ gives an isomorphism $V[x]\cong \cR_{S^1}(P_{[x]},V)$ inverse to the map sending $f$ to $f_{[x]}(e_x)$.
\end{proof}

\begin{corollary}
Each representation $P_{[x]}$ is projective. In other words, if $f:V\to W$ is an epimorphism then $\Hom_R(P_{[x]},V)\to \Hom_R(P_{[x]},W)$ is surjective.\qed
\end{corollary}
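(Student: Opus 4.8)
The plan is to read the statement off directly from the preceding Proposition, using only the naturality of the isomorphism established there. Recall that the Proposition produces, for each $R$-representation $V$ of $S^1$, an isomorphism $\cR_{S^1}(P_{[x]},V)\xrightarrow{\sim} V[x]$ sending $f$ to $f_{[x]}(e_x)$, and asserts that it is natural in $V$. So the whole content of the Corollary is already contained in that statement; what remains is a formal manipulation.

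First I would make the naturality explicit. Given a morphism $g:V\to W$ of representations and $f\in\cR_{S^1}(P_{[x]},V)$, the composite $g\circ f$ is sent to $(g\circ f)_{[x]}(e_x)=g_{[x]}(f_{[x]}(e_x))$, so the square
\[
\xymatrix{
\cR_{S^1}(P_{[x]},V)\ar[r]^-{\sim}\ar[d]_{\cR_{S^1}(P_{[x]},g)} & V[x]\ar[d]^{g_{[x]}}\\
\cR_{S^1}(P_{[x]},W)\ar[r]^-{\sim} & W[x]
}
\]
commutes; in particular the two vertical maps have the same image and cokernel. Now specialize to $g=f:V\to W$ an epimorphism: by the definition of epimorphism in $\cR_{S^1}$, the map $f_{[x]}:V[x]\to W[x]$ is surjective (indeed we only need surjectivity at the single point $[x]$, not at all points of $S^1$). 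By commutativity of the square, $\cR_{S^1}(P_{[x]},f):\cR_{S^1}(P_{[x]},V)\to\cR_{S^1}(P_{[x]},W)$ is then surjective as well, and since $\Hom_R(P_{[x]},-)$ denotes precisely $\cR_{S^1}(P_{[x]},-)$, this is exactly the assertion that $P_{[x]}$ is projective.

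There is essentially no obstacle here: the argument is purely formal once the Proposition is in hand. The only point that warrants a word of care is to align the paper's definition of \emph{epimorphism} (pointwise surjectivity of the maps $f_{[x]}$) with the lifting characterization of projectivity, and to note that projectivity of $P_{[x]}$ in fact uses only the behaviour of $f$ at the one point $[x]$. If one prefers a lifting-property phrasing instead of the surjectivity-of-$\Hom$ phrasing, I would simply say: given $f:V\onto W$ and $h:P_{[x]}\to W$, the element $h_{[x]}(e_x)\in W[x]$ lifts along $f_{[x]}$ to some $v\in V[x]$, and then $\varphi(v):P_{[x]}\to V$ from the Proposition's proof satisfies $f\circ\varphi(v)=h$, because both sides send $e_x$ to $f_{[x]}(v)=h_{[x]}(e_x)$.
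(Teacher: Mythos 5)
Your argument is correct and is exactly the deduction the paper intends (the corollary is stated with a \qed, i.e.\ as an immediate consequence of the preceding Proposition): naturality of the isomorphism $\cR_{S^1}(P_{[x]},V)\cong V[x]$ together with the paper's definition of epimorphism as pointwise surjectivity gives the result at once. Your closing remark that only surjectivity at the single point $[x]$ is needed is a correct and worthwhile observation, but nothing further is required.
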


If $x\le y<x+2\pi$ then $P_{[y]}[x]=R$ is generated by $e_y^\a$ where $\a=y-x$. So, we get the following Definition/Corollary.

\begin{definition}\label{DefDepth}
The \emph{depth} of any nonzero morphism of the form $f:P_{[x]}\to P_{[y]}$ is defined to be the unique nonnegative real number $\delta(f)=\a$ so that $f(e_x)=ue_y^\a$ for  a unit $u\in R$.  Since $t^ne_y^\a=e_y^{\a+2\pi n}$, this is equivalent to the formula $\delta(f)=\a+2\pi \nu(r)$ if $f(e_x)=re_y^\a$ and $\nu(r)\in\NN$ is the valuation of $r$. We define $\delta(0)=\infty$.
\end{definition}

\begin{lemma} The depth function $\delta$ has the following properties.
\begin{enumerate}
\item[(1)] For morphisms $f:P_{[x]}\to P_{[y]}$, $g:P_{[y]}\to P_{[z]}$ we have $\delta(g\circ f)=\delta(g)+\delta(f)$.
\item[(2)] For morphisms $f,g:P_{[x]}\to P_{[y]}$ and  $r,s\in R$ we have $\delta(rf+sg)\ge \min(\delta(f),\delta(g))$. 
\end{enumerate}
\end{lemma}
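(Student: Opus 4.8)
The plan is to prove both statements by reducing everything to the structure of morphisms $P_{[x]}\to P_{[y]}$ established in the Definition/Corollary above, namely that each such morphism is (after fixing generators) multiplication of $e_x$ to a scalar multiple of $e_y^\a$ where $\a = y - x + 2\pi n$ for various $n \ge 0$, and by using that composition of the structure maps $P_{[y]}^{(\cdot,\cdot)}$ satisfies $P_{[y]}^{(y-\a,\b)}(e_y^\a) = e_y^{\a+\b}$, together with the rule $t^n e_y^\g = e_y^{\g + 2\pi n}$.

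For part (1), first I would recall that a nonzero morphism $f:P_{[x]}\to P_{[y]}$ is determined by $f(e_x) \in P_{[y]}[x] = R e_y^{\a_0}$ where $\a_0 \in [0,2\pi)$ is the representative of the clockwise distance from $x$ to $y$; writing $f(e_x) = r e_y^{\a_0}$ with $\nu(r) = n$, we have $\delta(f) = \a_0 + 2\pi n$. Similarly $g(e_y) = s e_z^{\b_0}$ with $\nu(s) = m$ and $\delta(g) = \b_0 + 2\pi m$. Now I compute $(g\circ f)(e_x)$: the functoriality of $g$ means $g_{[x]} = g_{[y - \a_0]} \circ P_{[y]}^{(y,\a_0)}\cdots$ — more precisely, $g$ being a morphism of representations gives $g_{[x]}(e_y^{\a_0}) = g_{[x]}(P_{[y]}^{(y,\a_0)} e_y) = V^{(y,\a_0)}_{P_{[z]}} g_{[y]}(e_y)$ where the structure map sends $e_z^{\b_0} \mapsto e_z^{\b_0 + \a_0}$. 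Hence $(g\circ f)(e_x) = g_{[x]}(r e_y^{\a_0}) = r \cdot s\, e_z^{\b_0 + \a_0}$. Using $\nu(rs) = n + m$ and unwinding $e_z^{\b_0 + \a_0}$ into the normal form (shedding factors of $t$ whenever the exponent exceeds $2\pi$) gives $\delta(g\circ f) = \a_0 + \b_0 + 2\pi(n+m) = \delta(f) + \delta(g)$, with the edge case $\delta(0) = \infty$ handled by the convention that if either $f$ or $g$ is zero then so is $g\circ f$, and $\infty + (\text{anything}) = \infty$. The only subtlety is bookkeeping with the $2\pi$-periodicity: I would phrase it cleanly by noting $\delta$ is additive on the free monoid-like structure since $e_y^{\a} \cdot e_z^{\b}$-type composites just add exponents, and $\nu$ is additive on the valuation ring.

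For part (2), the key observation is that $rf + sg$ is the morphism sending $e_x \mapsto r f(e_x) + s g(e_x) \in P_{[y]}[x] = R e_y^{\a_0}$, where again $\a_0 \in [0,2\pi)$ is the common distance from $x$ to $y$ (the same $\a_0$ for both $f$ and $g$ since source and target agree). Write $f(e_x) = a\, e_y^{\a_0}$ and $g(e_x) = b\, e_y^{\a_0}$ with $a, b \in R$. Then $(rf + sg)(e_x) = (ra + sb)\, e_y^{\a_0}$, so $\delta(rf + sg) = \a_0 + 2\pi\,\nu(ra + sb)$ (or $\infty$ if $ra + sb = 0$). Since $\nu$ is a valuation, $\nu(ra + sb) \ge \min(\nu(ra), \nu(sb)) \ge \min(\nu(a), \nu(b))$, and therefore $\delta(rf + sg) \ge \a_0 + 2\pi \min(\nu(a),\nu(b)) = \min(\a_0 + 2\pi\nu(a),\, \a_0 + 2\pi\nu(b)) = \min(\delta(f),\delta(g))$. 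The case where one of $f,g$ is zero is again covered by the $\infty$ convention.

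I do not expect any serious obstacle here; this is essentially a direct computation. The one place to be careful is making sure the exponent normalization $e_y^{\g + 2\pi n} = t^n e_y^\g$ is applied consistently so that $\delta$ is well-defined independent of which representative exponent one writes down, and that in part (1) the target point $z$ receives the correct accumulated distance $\a_0 + \b_0$ rather than some other combination — this follows from the commuting square in the definition of a morphism of representations. I would present part (1) first since part (2) is even more elementary, and I would keep both arguments short, perhaps merging the zero-morphism edge cases into a single remark at the start.
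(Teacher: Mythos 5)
Your proof is correct and follows essentially the same computation as the paper: both parts reduce to writing $f(e_x)$ as a scalar times a power of the generator and invoking the additivity and ultrametric inequality of the valuation $\nu$ on $R$; the only cosmetic difference is that you normalize exponents to $[0,2\pi)$ with an arbitrary scalar, while the paper absorbs the valuation into the exponent and keeps the scalar a unit.
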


\begin{proof}
(1) If $f(e_x)=ue_y^\a$ and $g(e_y)=we_z^\b$ for units $u,w\in R$ then $gf(e_x)=uwe_z^{\a+\b}$ making $\delta(g\circ f)=\a+\b=\delta(f)+\delta(g)$.

(2) Let $f(e_x)=ue_y^\a$ and $g(e_x)=we_y^\b$ where $u,w$ are units in $R$. Suppose $\a=\delta(f)\le \b=\delta(g)$. Then $\b=\a+2\pi n$ for some $n\ge0$. So, $e_y^\b=t^ne_y^\a$ and $g(e_x)=wt^ne_y^\a$. So, $(rf+sg)(e_x)=(ru+swt^n)e_y^\a$ has depth $\ge\a=\min(\delta(f),\delta(g))$.
\end{proof}

We extend the definition of depth to any morphism $f:\bigoplus_i P_{[x_i]}\to \bigoplus_j P_{[y_j]}$ by
$$\delta(f):=min\{\delta(f_{ji})\ |\ f_{ji}:P_{[x_i]}\to P_{[y_j]} \}.$$

\begin{proposition}
The extended notion of depth satisfies the following conditions.
\begin{enumerate}
\item[(1)] Let $f:\bigoplus_i P_{[x_i]}\to \bigoplus_j P_{[y_j]}$, $g:\bigoplus_j P_{[y_j]}\to \bigoplus_k P_{[z_k]}$. Then $\delta(g\circ f)\ge\delta(g)+\delta(f)$.
\item[(2)] The depth of $f$ is independent of the choice of decompositions of the domain and range of $f$, i.e. $\delta(f)=\delta(\psi\circ f\circ \f)$ for all automorphisms $\psi,\f$ of $\bigoplus_j P_{[y_j]},\bigoplus_i P_{[x_i]}$.
\end{enumerate}
\end{proposition}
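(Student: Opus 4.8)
The plan is to reduce both statements to the single-summand depth lemma already proved, using bilinearity of composition. For part (1), write $f = (f_{ji})$ with $f_{ji}\colon P_{[x_i]}\to P_{[y_j]}$ and $g = (g_{kj})$ with $g_{kj}\colon P_{[y_j]}\to P_{[z_k]}$. The $(k,i)$ component of $g\circ f$ is $(g\circ f)_{ki} = \sum_j g_{kj}\circ f_{ji}$. By part (2) of the preceding Lemma, $\delta\big(\sum_j g_{kj}\circ f_{ji}\big)\ge \min_j \delta(g_{kj}\circ f_{ji})$, and by part (1) of that Lemma each $\delta(g_{kj}\circ f_{ji}) = \delta(g_{kj})+\delta(f_{ji})\ge \delta(g)+\delta(f)$, since $\delta(g)\le \delta(g_{kj})$ and $\delta(f)\le \delta(f_{ji})$ by the definition of extended depth as a minimum (with the convention that $\infty$ absorbs sums, so the zero-morphism cases cause no trouble). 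Taking the minimum over all $(k,i)$ gives $\delta(g\circ f)\ge\delta(g)+\delta(f)$. The inequality rather than equality is exactly because the $j$-sum can have cancellation in leading terms, as in part (2) of the Lemma.

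For part (2), first observe that it suffices to prove $\delta(\psi\circ f\circ\f)\ge\delta(f)$ for all automorphisms $\psi,\f$: applying this to $\psi^{-1}, \f^{-1}$ and the morphism $\psi\circ f\circ\f$ gives the reverse inequality $\delta(f)\ge\delta(\psi\circ f\circ\f)$, hence equality. The inequality $\delta(\psi\circ f\circ\f)\ge\delta(\psi)+\delta(f)+\delta(\f)$ is immediate from part (1) applied twice, but this is too weak since $\delta(\psi)$ and $\delta(\f)$ need not be zero a priori. The key point I would establish is that \emph{an automorphism of $\bigoplus_i P_{[x_i]}$ has depth $0$}: indeed, the endomorphism ring of $\bigoplus_i P_{[x_i]}$ has a two-sided ideal $I$ consisting of morphisms of positive depth (the morphisms with $\delta>0$ form an additive subgroup by Lemma (2) and an ideal under composition by part (1) of the present Proposition, which we just proved), and the quotient by $I$ is a product of matrix rings over the residue field $K$ — concretely, a morphism has depth $0$ iff its "leading coefficient matrix," grouping together the summands $P_{[x_i]}$ with a common value of $[x_i]$, is a nonzero matrix over $K$ in the appropriate block. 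Any automorphism must be invertible modulo $I$, hence its depth-$0$ part is invertible, hence it itself has depth $0$.

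A cleaner route to the same conclusion, which I would probably adopt to keep the argument elementary: an endomorphism $\f$ of $\bigoplus_i P_{[x_i]}$ with $\delta(\f)>0$ is topologically nilpotent — more precisely $\delta(\f^n)\ge n\,\delta(\f)\to\infty$, and a morphism of infinite depth is $0$; one then shows $\id - \f$ has positive-depth "error," so if $\psi$ were an automorphism with $\delta(\psi)>0$ we could write $\id = \psi\circ\psi^{-1}$ and derive a contradiction by comparing depth-$0$ parts. Either way, once $\delta(\psi)=\delta(\f)=0$ is in hand, part (1) gives $\delta(\psi\circ f\circ\f)\ge\delta(f)$, completing the proof.

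The main obstacle is the second point of part (2): establishing that automorphisms have depth $0$. Parts (1) of both the Lemma and the Proposition are routine bookkeeping once the right conventions for $\delta(0)=\infty$ are fixed; the genuine content is that invertibility forces the leading term to be nonzero, which requires isolating the "leading coefficient over $K$" functor and checking it is multiplicative — essentially the observation that $\End(\bigoplus_i P_{[x_i]})$ modulo its positive-depth ideal is semisimple, so units lift.
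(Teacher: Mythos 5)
Your part (1) is the paper's argument verbatim: expand $(g\circ f)_{ki}=\sum_j g_{kj}f_{ji}$ and combine the two parts of the preceding Lemma. For part (2) your overall structure (prove $\delta(\psi f\f)\ge\delta(f)$, then apply the same to $\psi^{-1},\f^{-1}$) also matches the paper, but you take a long and unnecessary detour in the middle. You dismiss the inequality $\delta(\psi f\f)\ge\delta(\psi)+\delta(f)+\delta(\f)$ as ``too weak since $\delta(\psi)$ and $\delta(\f)$ need not be zero a priori'' --- but to conclude $\delta(\psi f\f)\ge\delta(f)$ you do not need $\delta(\psi)=\delta(\f)=0$; you only need $\delta(\psi),\delta(\f)\ge0$, which holds for \emph{every} morphism by Definition \ref{DefDepth} (depth is a nonnegative real number, or $\infty$). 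That is exactly the paper's one-line argument. Your substitute --- proving that automorphisms have depth exactly $0$ via the semisimple quotient of $\End(\bigoplus P_{[x_i]})$ by the positive-depth ideal --- is a true and correctly argued statement (and it follows even more quickly from $0=\delta(\mathrm{id})=\delta(\psi\psi^{-1})\ge\delta(\psi)+\delta(\psi^{-1})\ge 0$ once part (1) is known), but it is strictly more than the proof requires. So: correct, but the ``main obstacle'' you identify is not an obstacle at all.
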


\begin{proof}
(1) By the extended definition of depth, $\delta(g f)$ is equal to the depth of one of its component functions $(gf)_{ki}:P_{[x_i]}\to P_{[z_k]}$. But this is the sum of composite functions of the form $g_{kj}f_{ji}:P_{[x_i]}\to P_{[y_j]}\to P_{[z_k]}$. By the lemma above, this gives
\[
\delta(g f)=\min(\delta((gf)_{ki}))\ge\min(\delta(g_{kj})+\delta(f_{ji}))\ge\delta(g)+\delta(f).
\]

(1) implies (2) since $\delta(\psi f\f)\ge \delta(\psi)+\delta(f)+\delta(\f)\ge\delta(f)$ and, similarly, $\delta(f)=\delta(\psi^{-1}\psi f\f\phi^{-1}g)\ge\delta(\psi f\f)$.
\end{proof}

\subsection{{Finitely generated projective representations of $S^1$}} It is shown here that finitely generated projective representations of $S^1$ are precisely the finitely generated torsion free representations.
\begin{definition}\label{def:depth}
A representation $V$ is \emph{torsion-free} if each $V[x]$ is a torsion-free $R$-module and each map $V^{(x,\a)}:V[x]\to V[x-\a]$ is a monomorphism. A representation $V$ is \emph{finitely generated} if it is a quotient of a finite sum of projective modules of the form $P_{[x]}$, i.e. there exists an epimorphism $\bigoplus_{i=0}^n P_{[x_i]}\onto V$.
\end{definition}

Let $V$ be a finitely generated torsion-free representation of $S^1$. Then the following lemma shows that a subrepresentation of $V$ generated at any finite set of points on the circle is a projective representation $P\cong \bigoplus m_iP_{[x_i]}$.

\begin{lemma}\label{lem: choice of vij is arbitrary} 
Let $V$ be a finitely generated torsion-free representation of $S^1$. Take any finite subset of $S^1$ and represent them with real numbers
\[
	x_0< x_1< x_2< \cdots < x_n<x_{n+1}=x_0+2\pi,\ \ \  x_i\in \RR.
\]
For each $0\le i\le n$ let $\{v_{ij}:{j=1,\cdots,m_i}\}$ be a subset of $V[x_i]$ which maps isomorphically to a basis of the cokernel of $V^{(x_{i+1},x_{i+1}-x_i)}:V[x_{i+1}]\to V[x_i]$ considered as a vector space over $K=R/(t)$. Let $f_{ij}:P_{[x_i]}\to V$ be the morphism 
defined by $f_{ij}(e_{x_i})=v_{ij}\in V[x_i]$
Then 
\begin{enumerate}
\item $f=\sum_{i=0}^n \sum_{j=0}^{m_i}f_{ij}:P=\bigoplus_{i=0}^n m_iP_{[x_i]}\to V$ is a monomorphism.
\item $f_{[x_i]}:P[x_i]\to V[x_i]$ is an isomorphism for each $i$.
\end{enumerate}
\end{lemma}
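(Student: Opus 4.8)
The plan is to analyze the morphism $f$ locally at each point of $S^1$, building on the structure of the modules $V[x]$ as finitely generated torsion-free (hence free, since $R$ is a DVR) $R$-modules together with the transition maps. First I would reduce statement (1) to statement (2): once we know $f_{[x_i]}: P[x_i] \to V[x_i]$ is an isomorphism at each of the chosen points $x_i$, I claim $f_{[z]}: P[z] \to V[z]$ is a monomorphism for every $z \in S^1$. Indeed, for $z$ in the arc $(x_{i+1}, x_i]$ (going clockwise), the module $P[z] = \bigoplus_{k} m_k P_{[x_k]}[z]$ is generated by those $e_{x_k}^{z - x_k}$ for which the rotation from $x_k$ to $z$ has not yet ``wrapped'', and the commutative square in the definition of a morphism expresses $f_{[z]}$ in terms of $f_{[x_i]}$ composed with $V^{(x_i, x_i - z)}$ on one side and with the $P$-transition maps on the other; since $V$ is torsion-free, $V^{(x_i,\a)}$ is a monomorphism, and the $P$-transition maps are injective, a diagram chase gives injectivity of $f_{[z]}$. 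Then $f$ is a monomorphism by definition (monomorphisms of representations are defined pointwise).

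Next I would prove (2) by descending induction on $i$, or rather by a clockwise sweep around the circle. Fix $i$ and consider the commutative square relating $V[x_{i+1}] \to V[x_i]$ and $P[x_{i+1}] \to P[x_i]$. By construction the $v_{ij}$ were chosen to lift a $K$-basis of $\coker(V^{(x_{i+1}, x_{i+1}-x_i)} \otimes K)$, and $P[x_i]$ gets, compared to $P[x_{i+1}]$, exactly $m_i$ new free generators (the generators $e_{x_i}^0$ of the $m_i$ copies of $P_{[x_i]}$), while the generators inherited from $P_{[x_k]}$ for $k \le i-1$ or $k = i+1, \dots$ are carried along by the $P$-transition map. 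So modulo $t$, the map $f_{[x_i]} \otimes K$ is: on the image of $f_{[x_{i+1}]} \otimes K$ it agrees with what we already know, and on the new generators it hits a complement spanning the cokernel. Arguing that $f_{[x_0]} \otimes K$ is surjective (the finite set can be chosen, or enlarged, so that the generators $v_{ij}$ over all $i$ generate $V$ after a full loop — this is where finite generation of $V$ enters, guaranteeing the cokernels are finite-dimensional and their total dimension stabilizes), I conclude $f_{[x_i]} \otimes K$ is surjective for all $i$, hence by Nakayama $f_{[x_i]}$ is surjective. Since both $P[x_i]$ and $V[x_i]$ are free $R$-modules of the same finite rank (the rank of $V[x_i]$ equals $\sum$ of the dimensions of the successive cokernels going once around, which is exactly $\operatorname{rank} P[x_i]$), a surjection between free modules of equal finite rank over a commutative Noetherian ring is an isomorphism.

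The step I expect to be the main obstacle is the bookkeeping of ranks: showing that $\operatorname{rank}_R V[x_i] = \operatorname{rank}_R P[x_i]$, equivalently that the dimensions of the cokernels $\coker(V^{(x_{j+1},x_{j+1}-x_j)}\otimes K)$ summed over one full revolution equal $\operatorname{rank}_K V[x_i]\otimes K$. This requires understanding that because $V^{(x,2\pi)}$ is multiplication by $t$ (hence injective with cokernel $V[x]\otimes K$ of dimension $=\operatorname{rank}_R V[x]$), the telescoping of cokernels around the circle is governed by the short exact sequences $0 \to V[x_{j+1}] \to V[x_j] \to C_j \to 0$ and the identity $V^{(x_0, 2\pi)} = $ (composite of all the one-step maps) $=$ multiplication by $t$; a snake-lemma / Euler characteristic argument then pins down $\sum_j \dim_K C_j = \dim_K(V[x_0]/tV[x_0])= \operatorname{rank}_R V[x_0]$. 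Once this rank count is in hand, the rest is the routine Nakayama-and-diagram-chase outlined above. I would also remark that the hypothesis that the $v_{ij}$ are chosen to ``map isomorphically to a basis of the cokernel'' makes the choice genuinely arbitrary among such lifts, since any two lifts differ by elements in the image of $V^{(x_{i+1}, x_{i+1}-x_i)}$, which are already accounted for by the other generators — this is the content of the lemma's title and costs nothing extra.
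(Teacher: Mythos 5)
Your proposal is correct and follows essentially the same route as the paper: the paper transports everything into $V[x_0]$ via the (injective) transition maps, obtaining a filtration $tV_0=V_{n+1}\subseteq V_n\subseteq\cdots\subseteq V_0$ whose successive quotients are based by the images $\overline w_{ij}$ of the $v_{ij}$, so that the $w_{ij}$ form a basis of $V_0/tV_0$ and hence (Nakayama plus torsion-freeness) a free basis of $V_0$ --- exactly your ``sweep around the circle, Nakayama, rank count'', with the identity $V^{(x_0,2\pi)}=t$ supplying the telescoping you correctly flagged as the key point, and with (1) deduced from (2) as you describe. One small correction: the $v_{ij}$ need only generate $V[x_0]$ (which they automatically do for \emph{any} finite subset of $S^1$), not $V$ itself, so no enlargement of the chosen point set is needed or allowed.
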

\begin{proof}
Since $V$ is torsion-free, the maps 
$V^{(x_i,x_i-x_0)}:V[x_i]\to V[x_0]$
are monomorphisms for  $i=0,1,2,\cdots,n$. Let $V_i=image(V^{(x_i,x_i-x_0)})\subset V[x_0].$
Then $$tV_0=V_{n+1}\subseteq V_n\subseteq\cdots \subseteq V_2\subseteq V_1\subseteq V_{0}.$$
 Furthermore, $V[x_i]\cong V_i$ and this isomorphism induces an isomorphism of quotients: \\$V[x_i]/V[x_{i+1}]\cong V_i/V_{i+1}$. 
 Let $w_{ij}\in V_i\subseteq V_0$ be the image of $v_{ij}\in V[x_i]$ and 
  let $\overline w_{ij}=w_{ij}+V_{i+1}\in V_i/V_{i+1}\cong V[x_i]/V[x_{i+1}]$.
   For each $i$, the $\overline w_{ij}$ form a basis for $V_i/V_{i+1}$. Taken together, $w_{ij}+tV_0$ form a basis for $V_0/tV_0$. Since $V_0$ is torsion free, it follows from Nakayama's Lemma, that the $w_{ij}$ generate $V_0$ freely. Therefore, the morphism $f:P=\bigoplus m_iP_{[x_i]}\to V$ which maps the generators of $P$ to the elements $v_{ij}$ induces an isomorphism $f_{[x_0]}:P[x_0]\cong V[x_0]$. 
 Applying the same argument to the points
 $$x_i< x_{i+1}< \cdots < x_n< x_0+2\pi, x_1+2\pi<\cdots < x_i+2\pi,  \ \ \  x_i\in \RR$$
   we see that $f_{[x_i]}:P[x_i]\to V[x_i]$ is an isomorphism for all $i$. This proves the second condition. The first condition follows.
\end{proof}
\begin{proposition}\label{prop:characterization of fg projective representations}
Every finitely generated projective representation of $S^1$ is torsion-free. Conversely, every  finitely generated torsion-free representation of $S^1$ over $R$ is projective and isomorphic to a direct sum of the form $\bigoplus_{i=0}^n P_{[x_i]}$.
\end{proposition}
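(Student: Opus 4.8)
The plan is to prove the two assertions of Proposition \ref{prop:characterization of fg projective representations} in turn, building directly on Lemma \ref{lem: choice of vij is arbitrary}.

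For the first statement, that every finitely generated projective representation is torsion-free, it suffices by closure of torsion-free representations under direct sums and subobjects to check that each generator $P_{[x]}$ is torsion-free. But this is immediate from the explicit description in the definition of $P_{[x]}$: each module $P_{[x]}[x-\a]=Re_x^\a$ is a free, hence torsion-free, $R$-module, and each structure map $P_{[x]}^{(x-\a,\b)}$ sends $e_x^\a$ to $e_x^{\a+\b}$, which is injective since $R$ is a domain and $e_x^{\a+\b}=t^ne_x^\g$ for the appropriate $\g\in[0,2\pi)$ and $n\in\NN$. A direct sum $\bigoplus P_{[x_i]}$ is then torsion-free componentwise, and a subrepresentation of a torsion-free representation is torsion-free since submodules of torsion-free modules are torsion-free and restrictions of monomorphisms are monomorphisms. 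Finitely generated projectives are summands of finite sums of the $P_{[x]}$, hence subobjects of such sums, so they are torsion-free.

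For the converse, let $V$ be finitely generated and torsion-free. First I would use finite generation to fix an epimorphism $g:\bigoplus_{i=0}^n P_{[y_i]}\onto V$, and let $\{[x_0],\dots,[x_n]\}$ (reindexed and cyclically ordered as $x_0<x_1<\cdots<x_n<x_0+2\pi$) be the set of points occurring. Then I apply Lemma \ref{lem: choice of vij is arbitrary} to this finite subset of $S^1$: choosing for each $i$ a lift $\{v_{ij}\}$ of a $K$-basis of $\coker\big(V^{(x_{i+1},\,x_{i+1}-x_i)}\big)$, we obtain a monomorphism $f:P=\bigoplus_{i=0}^n m_iP_{[x_i]}\to V$ which is an isomorphism $f_{[x_i]}:P[x_i]\to V[x_i]$ at every chosen point. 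It remains to check that $f$ is actually an epimorphism, i.e. $f_{[z]}:P[z]\to V[z]$ is surjective for \emph{every} $[z]\in S^1$, not just the chosen points. Given arbitrary $z$, pick the index $i$ with $x_i<z\le x_{i+1}$ (taking representatives appropriately) and consider the commuting square relating $V^{(x_{i+1},x_{i+1}-z)}:V[x_{i+1}]\to V[z]$ and the corresponding map on $P$; since $f_{[x_{i+1}]}$ is an isomorphism and $P^{(x_{i+1},x_{i+1}-z)}$ is surjective onto $P[z]$ (because each $P_{[x_k]}$, restricted between consecutive chosen points, has surjective structure maps onto the part with no extra $t$-factor), a diagram chase shows $f_{[z]}$ is surjective. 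Actually the cleanest route: $V^{(z,\,2\pi-(z-x_i))}\!:V[z]\to V[x_i]$ together with torsion-freeness gives $V[z]\hookrightarrow V[x_i]\cong P[x_i]$, and one identifies the image with $P[z]$ via the depth bookkeeping. Once $f$ is a monomorphism and an epimorphism, it is an isomorphism of representations, so $V\cong\bigoplus_{i=0}^n m_iP_{[x_i]}$, which is projective by the Corollary after the Hom-computation.

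The main obstacle is the last step: upgrading the pointwise isomorphism at the finitely many chosen points to surjectivity at all points of $S^1$. The subtlety is that $P[z]$ for $z$ strictly between two chosen points is a free module generated by elements $e_{x_i}^{\,\a}$ with $\a=z-x_i$ and no $t$-factor, whereas arbitrary elements of $V[z]$ need not obviously lift without introducing $t$'s; one must use that $V$ is torsion-free so that $V^{(x_{i+1},x_{i+1}-z)}$ is injective and that the cokernel computations at the chosen points control the image of the structure maps over the whole interval. I expect this to go through by choosing the finite set of points to include, together with the generators' points, any point $z$ under consideration — indeed, since Lemma \ref{lem: choice of vij is arbitrary} holds for \emph{any} finite subset and its conclusion (the induced $f$ being a monomorphism that is iso at the chosen points) is compatible under enlarging the set, one gets that the $P$ built from the original points already surjects onto $V$, because enlarging to include $z$ produces an isomorphism at $z$ and the two constructions agree. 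So the real content is the bookkeeping showing these constructions are compatible under refinement of the point set, which is routine given Nakayama as used in the lemma.
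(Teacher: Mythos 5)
Your proposal is correct and follows the paper's route: the first half via closure of torsion-freeness under direct sums and subobjects (a projective being a summand of a finite sum of $P_{[x]}$'s), the second via Lemma \ref{lem: choice of vij is arbitrary}. The only place you diverge is the final surjectivity step, which you flag as the main obstacle; in the paper it is immediate. Since you chose the $[x_i]$ to be exactly the points at which $V$ is generated (the points of a presentation $\bigoplus P_{[y_j]}\onto V$), the image of $f$ is a subrepresentation of $V$ containing every $V[x_i]$ by Condition (2) of the Lemma, hence equals $V$; equivalently, generation at the $[x_i]$ says precisely that $V^{(x_{i+1},\,x_{i+1}-z)}:V[x_{i+1}]\to V[z]$ is onto for $x_i<z\le x_{i+1}$, which is the one input missing from your diagram chase. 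No refinement of the point set or compatibility bookkeeping is needed.
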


\begin{proof}
 The first statement is clear since indecomposable projectives are torsion free and every direct sum of torsion-free representations is torsion-free. For the second statement, let $V$ be a finitely generated torsion-free representation of $S^1$. Suppose that $V$ is generated at $n+1$ points on the circle: $[x_0],[x_1],\cdots,[x_n]\in S^1$ where 
$x_0< x_1< x_2< \cdots < x_n<x_{n+1}=x_0+2\pi,$ with $x_i\in \RR$, having multiplicity $m_i$
as in the lemma. 
Let $f:P=\bigoplus_{i=0}^n m_iP_{[x_i]}\to V$ be the monomorphism given by the lemma. Then $f:P\to V$ is also onto by Condition (2) in the lemma since $V$ is generated at the points $[x_i]$. Therefore, $P\cong V$ as claimed.
\end{proof}

\subsection{{The category $\cP_{S^1}$}} Let $\cP_{S^1}$ be the category of all finitely generated  projective (and thus torsion-free) representations of $S^1$ over $R$. By the proposition above, each indecomposable object of $\cP_{S^1}$ is isomorphic to $P_{[x]}$ for some $[x]\in S^1$. 

\begin{lemma}\label{categorical epimorphism in PS1}
Any nonzero morphism $f:P_{[x]}\to P_{[y]}$ is a categorical epimorphism in $\cP_{S^1}$ in the sense that, for any two morphisms $g,h:P_{[y]}\to V$ in $\cP_{S^1}$, $gf=hf$ implies $g=h$.
\end{lemma}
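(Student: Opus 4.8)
The plan is to reduce the statement to a computation with the depth function and the explicit description of morphisms between the $P_{[x]}$.

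First I would reduce to the indecomposable case for $V$: since every object of $\cP_{S^1}$ is a finite direct sum of representations $P_{[z]}$ by Proposition \ref{prop:characterization of fg projective representations}, and since a morphism $g:P_{[y]}\to V=\bigoplus_k P_{[z_k]}$ is the same as a tuple of morphisms $g_k:P_{[y]}\to P_{[z_k]}$, the equation $gf=hf$ holds if and only if $g_kf=h_kf$ for all $k$. Thus it suffices to prove: for any nonzero $f:P_{[x]}\to P_{[y]}$ and any $g,h:P_{[y]}\to P_{[z]}$, $gf=hf$ implies $g=h$. Setting $p=g-h$, this is equivalent to showing that $pf=0$ forces $p=0$, i.e. that $f$ is not a left zero-divisor among morphisms out of $P_{[y]}$.

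Next I would use Definition \ref{DefDepth} and Lemma \ref{lem: choice of vij is arbitrary}(1) (the additivity of depth under composition): if $p\neq 0$ then $\delta(p)<\infty$, and since $\delta(f)<\infty$ as well (because $f\neq 0$), we get $\delta(p\circ f)=\delta(p)+\delta(f)<\infty$, hence $p\circ f\neq 0$. Concretely, write $f(e_x)=ue_y^\alpha$ with $u$ a unit and $p(e_y)=re_z^\beta$; then $(p\circ f)(e_x)=ur\,e_z^{\alpha+\beta}$, and since $u$ is a unit and $R$ is a domain (being a discrete valuation ring), $ur\neq 0$ whenever $r\neq 0$, so $p\circ f\neq 0$. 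This is really the whole content: the key point is that $f$ sends the generator $e_x$ to a \emph{unit} multiple of a generator $e_y^\alpha$, never a non-unit multiple, which is exactly what Definition \ref{DefDepth} records, and that $R$ has no zero-divisors.

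The only mild subtlety — and the step I would be most careful about — is the bookkeeping identifying $\cP_{S^1}(P_{[y]},P_{[z]})$ with the appropriate free rank-one $R$-module so that the formula $(p\circ f)(e_x)=ur\,e_z^{\alpha+\beta}$ is literally correct, including the case $z=y$ (where $p$ is multiplication by an element of $R$) and the conventions $e_z^{\gamma+2\pi n}=t^n e_z^\gamma$; here one uses Proposition \ref{prop:characterization of fg projective representations}'s corollary-style identification $\cR_{S^1}(P_{[y]},V)\cong V[y]$ together with the fact that $P_{[z]}[y]$ is free of rank one. Since a morphism out of $P_{[y]}$ is determined by the image of $e_y$, once $p(e_y)\neq 0$ gives $(pf)(e_x)\neq 0$ we conclude $pf\neq 0$, completing the argument. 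No genuine obstacle is expected; the proof is a one-line depth computation dressed up with the reduction to the indecomposable target.
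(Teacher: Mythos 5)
Your argument is correct, but it is organized differently from the paper's. The paper does not decompose the target $V$ at all: it writes $f(e_x)=re_y^{\alpha}$, uses naturality to get $gf(e_x)=rV^{(y,\alpha)}(g(e_y))$ and $hf(e_x)=rV^{(y,\alpha)}(h(e_y))$, and then invokes torsion-freeness of $V$ (so that $V^{(y,\alpha)}$ is mono and multiplication by $r\neq 0$ is injective on $V[y]$) to conclude $g(e_y)=h(e_y)$, hence $g=h$. You instead reduce to $V=P_{[z]}$ indecomposable via Proposition \ref{prop:characterization of fg projective representations} and then run the depth-additivity computation $\delta(pf)=\delta(p)+\delta(f)<\infty$ for $p=g-h$. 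Both are sound; the paper's version is slightly leaner (no decomposition step) and, as written, proves the stronger statement that $f$ is a categorical epimorphism into \emph{any} torsion-free representation, not just objects of $\cP_{S^1}$, which is the form actually used later (e.g.\ in the proof of Lemma \ref{lem:objects of smallest depth}, where $\a_\ast$ is cancelled against maps into $\bigoplus_k P_{[x_k]}$). Your version trades that generality for an explicit generator computation, which is fine since the targets there are in fact projective. One bookkeeping slip: the additivity of depth under composition is the unnumbered lemma immediately following Definition \ref{DefDepth}, not Lemma \ref{lem: choice of vij is arbitrary}; the content you invoke is nevertheless exactly what that lemma states.
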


\begin{proof}
Let $f(e_x)=re_y^{\a}$ for $r\neq0\in R$. Then $gf(e_x)=g(re_y^{\a})=g(rP_{[y]}^{(y,\a)}e_y)=rV^{(y,\a)}(g(e_y))$. By assumption this is equal to $hf(e_x)=rV^{(y,\a)}(h(e_y))$. Since $V$ is torsion-free, this implies that $g(e_y)=h(e_y)$ making $g=h$.
\end{proof}

For the proofs of Lemma \ref{lem:objects of smallest depth} and Proposition \ref{prop: projective and injective property of E(x,y) in ccc-c} below, we need the following easy observation using the depth $\delta(f)$ from Definition \ref{def:depth}.

\begin{proposition}\label{one more trivial observation}
Let $f:P_{[x]}\to P_{[y]}$.\begin{enumerate}
\item If $g:P_{[x]}\to P_{[z]}$ is a morphism so that $\delta(f)\le\delta(g)$ then there is a unique morphism $h:P_{[y]}\to P_{[z]}$ so that $hf=g$. 
\item If $g':P_{[w]}\to P_{[y]}$ is a morphism with $\delta(g')\ge\delta(f)$ then there is a unique $h':P_{[w]}\to P_{[x]}$ so that $fh'=g'$.\end{enumerate}
\end{proposition}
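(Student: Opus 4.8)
The plan is to work with the explicit generators $e_x \in P_{[x]}$, $e_y \in P_{[y]}$, etc., and reduce everything to the computation of depth given in Definition \ref{DefDepth}, using that the representations $P_{[x]}$ are torsion-free and that a morphism out of $P_{[x]}$ is determined by its value on $e_x$ (Proposition on $\cR_{S^1}(P_{[x]},V)\cong V[x]$). For part (1), write $f(e_x) = r\,e_y^\a$ with $\delta(f) = \a + 2\pi\nu(r)$ and $g(e_x) = s\,e_z^\b$ with $\delta(g) = \b + 2\pi\nu(s)$; the hypothesis $\delta(f)\le\delta(g)$ gives $\delta(g) = \delta(f) + \g$ for some $\g\in\RR_{\ge 0}$. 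The candidate $h:P_{[y]}\to P_{[z]}$ must send $e_y$ to $h(e_y)=:c\,e_z^{\g'}$, and the constraint $hf = g$ reads $h(r\,e_y^\a) = r\,(c\,e_z^{\a+\g'}) = s\,e_z^\b$ in $P_{[z]}[z - \b]$. Since $P_{[z]}[z-\b]\cong R$ is free, this is a single equation in $R$ (after using $e_z^{\a+\g'} = t^k e_z^{(\a+\g')-2\pi k}$ to land in the right module), and the depth inequality is exactly what guarantees it has a solution $c$; moreover $c$ is then uniquely determined because $R$ is a domain and $r\neq 0$. Uniqueness of $h$ as a morphism then follows from $f\neq 0$ being a categorical epimorphism (Lemma \ref{categorical epimorphism in PS1}), or directly from the fact that $h(e_y)$ is forced.

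More carefully, for (1) I would normalize: after multiplying $f$ and $g$ by units we may assume $f(e_x) = e_y^\a$ (so $\delta(f) = \a$, with $0\le \a$, possibly $\ge 2\pi$) and $g(e_x) = e_z^\b$ with $\b = \delta(g) \ge \a$. We seek $h$ with $h(e_y) = e_z^{\b-\a}$; this is well-defined since $\b - \a \ge 0$, and then $hf(e_x) = h(e_y^\a) = h(P_{[y]}^{(y,\a)} e_y)$, which by naturality equals $P_{[z]}^{(\,\cdot\,,\a)}(h(e_y)) = P_{[z]}^{(\,\cdot\,,\a)}(e_z^{\b-\a}) = e_z^\b = g(e_x)$, so $hf = g$. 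For general (non-normalized) $f,g$ one absorbs the units and valuations into the exponents exactly as in the proof of Lemma on the depth function. Part (2) is the formal dual: with $f(e_x) = e_y^\a$ (after units) and $g'(e_w) = e_y^{\da}$ where $\da = \delta(g') \ge \a$, one needs $h':P_{[w]}\to P_{[x]}$ with $fh' = g'$; take $h'(e_w) = e_x^{\da - \a}$, which is legitimate since $\da-\a\ge 0$, and check $fh'(e_w) = f(e_x^{\da-\a}) = (\text{naturality}) = e_y^{\da} = g'(e_w)$.

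The only genuine subtlety — and the step I would be most careful about — is the bookkeeping when the relevant exponents exceed $2\pi$, i.e. keeping track of the identification $e_y^{\g + 2\pi n} = t^n e_y^\g$ so that "$h(e_y) = e_z^{\delta(g)-\delta(f)}$" actually names an element of the correct free module $P_{[z]}[z - (\delta(g)-\delta(f))\bmod 2\pi]$ and the equation $hf = g$ holds on the nose rather than up to a power of $t$. This is purely formal given Definition \ref{DefDepth}'s convention $\delta(f) = \a + 2\pi\nu(r)$, but it is where an off-by-a-$t$ error would hide, so I would phrase the construction of $h$ (resp. $h'$) directly in terms of the value on the generator and then verify $hf = g$ (resp. $fh' = g'$) by a one-line naturality computation rather than by manipulating matrix entries. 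Uniqueness throughout is immediate from torsion-freeness: two morphisms out of $P_{[y]}$ (resp. $P_{[w]}$) agreeing after precomposition (resp. composition) with the nonzero map $f$ must agree on the generator, hence coincide.
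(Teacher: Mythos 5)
Your proof is correct and follows essentially the same route as the paper: both define $h$ on the generator by $h(e_y)=r^{-1}s\,e_z^{\delta(g)-\delta(f)}$ (you absorb the units by normalizing first) and verify $hf=g$ by the naturality/structure-map computation, with uniqueness coming from $f$ being a categorical epimorphism. The bookkeeping with exponents $\ge 2\pi$ that you flag is handled exactly by the convention $e_y^{\gamma+2\pi n}=t^n e_y^{\gamma}$ of Definition \ref{DefDepth}, as you anticipate.
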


\begin{proof}
We prove the first statement. The second statement is similar. Let $\a=\delta(f),\b=\delta(g)-\a$. Then $f(e_x)=re_y^\a$ and $g(e_x)=se_z^{\a+\b}$ where $r,s$ are units in $R$. Let $h:P_{[y]}\to P_{[z]}$ be the morphism given by $h(e_y)=r^{-1}se_z^\b$. Then $hf(e_x)=rh(e_y^\a)=se_z^{\a+\b}$. So $hf=g$.
\end{proof}

\begin{definition}\label{defn: topology of PS1} We define $Ind\,\cP_{S^1}$ to be the topological $R$-category defined as follows. Algebraically, $Ind\,\cP_{S^1}$ is the full subcategory of $\cP_{S^1}$ with objects $P_{[x]}$ for every $[x]\in S^1$. We topologize this set of objects so that it is homeomorphic to $S^1$. The space of morphisms is the quotient space:
\[
	Mor(Ind\,\cP_{S^1})=\{(r,x,y)\in R\times \RR\times\RR\ |\ x\le y\le x+2\pi\}/\sim
\]
where the equivalence relation is given by $(r,x,y)\sim (r,x+2\pi n,y+2\pi n)$ for any $n\in\ZZ$ and $(r,x,x+2\pi)\sim (tr,x,x)$. Here $(r,x,y)$ represents the morphism $P_{[x]}\to P_{[y]}$ which sends $e_x$ to $re_y^{y-x}$. The second relation comes from the identity $re_x^{2\pi}=tre^0_x$. We give $\RR$ the usual topology and $R$ the $\mathfrak m$-adic topology.
\end{definition}

\begin{remark} \begin{enumerate}\item The category $\cP_{S^1}$ is algebraically equivalent to the topological additive $R$-category $add\,Ind\,\cP_{S^1}$ given by Definition \ref{topology of add D}.
\item In the terminology of \cite{vR}, $\cP_{S^1}$ is the full subcategory of finitely generated projective objects in the \emph{big loop} $\widehat{K\cL^\bullet}$ where $\cL$ is the half-open interval $[0,2\pi)$ considered as a linearly ordered set.
\end{enumerate}
\end{remark}

\section{The Frobenius categories $\cF_\pi$, $\cF_c$, $\cF_\phi$}\label{sec: Frobenius categories}

We define first $\cF_\pi$ as the most natural Frobenius category coming from the representations of the circle $S^1=\RR/2\pi\ZZ$. The categories $\cF_c,\cF_\phi$ will be defined as certain full subcategories of $\cF_\pi$.

\subsection{{Frobenius category $\cF_\pi$}} We define the category $\cF_\pi$ and the set of exact sequences in $\cF_\pi$. Then we show that $\cF_\pi$ is an exact category and that it has enough projectives with respect to the exact structure. Finally, we show that projective and injective objects in $\cF_\pi$ coincide proving that $\cF_\pi$ is a Frobenius category.

\begin{definition}
The category  $\cF_\pi$ and the exact sequences in $\cF_\pi$ are defined as:
\begin{enumerate}
\item \emph{Objects} of  $\cF_\pi$ are pairs $(V,d)$ where $V\in \cP_{S^1}$ and 
 $d:V\to V$ is an endomorphism of $V$ so that $d^2=t$ (multiplication by $t$).

\item \emph{Morphisms} in $\cF_\pi$ are  $f:(V,d)\to (W,d)$ where $f:V\to W$  satisfies  $fd=df$. 
\item  \emph{Exact sequences} in $\cF_\pi$ are
$(X,d)\xrarrow f(Y,d)\xrarrow g(Z,d)$
where  $0\to X\xrarrow f Y\xrarrow g Z\to0$ is exact (and therefore split exact) in $\cP_{S^1}$.
\end{enumerate}
Following Waldhausen \cite{W} we call the first morphism in an exact sequence a \emph{cofibration} and write it as $(X,d)\cof (Y,d)$ and we call the second morphism a \emph{quotient map} and denote it by $(Y,d)\onto(Z,d)$.
\end{definition}

\begin{remark} Note that if $(V,d)$ is an object in $\cF_\pi$, then $V$ cannot be indecomposable since $\End(P_x)=R$ does not contain an element whose square is $t$. We will see later that $V$ must have an even number of components. 
\end{remark}

\begin{theorem}\label{thmxxx}
 The category $\cF_\pi$ is a Frobenius category.
\end{theorem}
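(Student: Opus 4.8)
The plan is to verify the axioms of an exact category for $\cF_\pi$, check it has enough projectives and injectives, and then identify the projectives with the injectives. First I would establish that $\cF_\pi$ is an exact category in the sense of Quillen: the admissible short exact sequences are declared to be those $(X,d)\cof(Y,d)\onto(Z,d)$ which split in $\cP_{S^1}$. Since $\cP_{S^1}$ is a split exact (in fact semisimple-like, additive Krull--Schmidt) category, the pullback/pushout and obscure axioms reduce to routine diagram chasing: given a split exact sequence $0\to X\to Y\to Z\to 0$ in $\cP_{S^1}$ equipped with compatible $d$'s, a pushout along a morphism $(X,d)\to(X',d)$ exists because the pushout in $\cP_{S^1}$ carries a canonical induced endomorphism squaring to $t$, and it is again split since $\cP_{S^1}$ is split exact. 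Closure under composition of cofibrations and the existence of a zero object $(0,0)$ are immediate. This part is bookkeeping, not a real obstacle.

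Second, I would exhibit the projective objects and show there are enough of them. The natural candidates are $E(x,x)=\bigl(P_{[x]}\oplus P_{[x]},\ d\bigr)$ with $d$ the off-diagonal matrix built from the identity $P_{[x]}\to P_{[x]}$ and multiplication by $t$; more generally finite sums of such. To prove $E(x,x)$ is projective with respect to the exact structure, I would use that any admissible epimorphism $(Y,d)\onto(Z,d)$ is split as a map of representations, and then lift a morphism $E(x,x)\to(Z,d)$ componentwise using the projectivity of $P_{[x]}$ in $\cP_{S^1}$ (Corollary after the Hom-isomorphism), finally correcting the lift to commute with $d$ using the specific structure of $d$ on $E(x,x)$ — this is where the precise choice $d^2=t$ and the two-component form are used, via the observation in Proposition~\ref{one more trivial observation}. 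For "enough projectives", given any $(V,d)$ with $V\cong\bigoplus_i P_{[x_i]}$, I would construct a surjection from a direct sum of objects $E(x_i,x_i)$ onto $(V,d)$, realized as an admissible epimorphism; the kernel is again in $\cF_\pi$ by torsion-freeness (Proposition~\ref{prop:characterization of fg projective representations}).

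Third, a dual argument shows $\cF_\pi$ has enough injectives and that the injectives are again the finite sums of the $E(x,x)$: one uses that admissible monomorphisms split as representation maps, together with the second half of Proposition~\ref{one more trivial observation}. Since the class of projectives coincides with the class of injectives — both being $\add\{E(x,x):[x]\in S^1\}$ — the category $\cF_\pi$ is Frobenius by definition. The main obstacle I anticipate is the third step in the projectivity argument: correcting a representation-theoretic lift so that it commutes with the differential $d$. One has a lift $\tilde f$ of $f\colon E(x,x)\to (Z,d)$ ignoring $d$, and must modify it to $\tilde f'$ with $\tilde f' d = d\tilde f'$; the error $d\tilde f-\tilde f d$ maps into the kernel of the epimorphism, and one must solve for a correction term using that $d$ restricted to $E(x,x)$ is, up to the splitting, the "canonical" nilpotent-of-square-$t$ operator. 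Keeping track of this while respecting the $R$-linearity and the depth constraints is the one place where genuine care (rather than formal nonsense) is required; everything else follows the standard template for showing a category of "modules with an endomorphism" over a Frobenius-type base is Frobenius.
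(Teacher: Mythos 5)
Your proposal is correct and its overall architecture --- the split-exact structure inherited from $\cP_{S^1}$, the objects $E(x,x)$ as the projective-injectives, enough of each via $\bigoplus E(x_i,x_i)\onto (V,d)$ and its dual, and the coincidence of the two classes --- is exactly the paper's. The one real divergence is in how you prove that $E(x,x)$ is projective and injective: you plan to lift a morphism $E(x,x)\to(Z,d)$ componentwise in $\cP_{S^1}$ and then correct the lift so that it commutes with $d$, flagging this correction as the delicate step and proposing to use Proposition \ref{one more trivial observation} for it. The paper dissolves this difficulty by first proving the adjunction $\cF_\pi(P^2,(V,d))\cong\cP_{S^1}(P,V)$: a morphism out of $P^2=E(x,x)$ is forced to be of the form $(f,df)$, so one lifts only $f$ through the split epimorphism $Y\onto Z$ and defines the second component to be $d\tilde f$, which then commutes with $d$ automatically; dually, $\cF_\pi((V,d),P^2)\cong\cP_{S^1}(V,P)$ gives injectivity for cofibrations. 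Your correction scheme does go through --- the ``correction'' amounts to discarding the independent lift of the second component and replacing it by $d\tilde f$ --- but the adjunction is the cleaner statement, and Proposition \ref{one more trivial observation} is not what is needed here. One further remark: your closing identification of the projectives with $\add\,\{E(x,x)\}$ is what the paper establishes via Lemma \ref{End(P2) is local} (local endomorphism rings, hence a Krull--Schmidt identification of the indecomposable summands of $\bigoplus P_{[x_i]}^2$); for the Frobenius conclusion alone this can be bypassed, since every projective is a direct summand of some $V^2$ by splitting $(1,d):V^2\onto(V,d)$, and a direct summand of an injective object is injective.
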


The proof of this theorem will occupy the rest of this subsection.

\begin{lemma}
A morphism $f:(V,d)\cof (W,d)$ in $\cF_\pi$ is a cofibration if and only if $f:V\to W$ is a split monomorphism in $\cP_{S^1}$. Similarly, $f$ is a quotient map in $\cF_\pi$ if and only if it is a split epimorphism in $\cP_{S^1}$. In particular, all epimorphisms in $\cF_\pi$ are quotient maps.
\end{lemma}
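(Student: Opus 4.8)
The plan is to prove the statement about cofibrations carefully and then observe that the statement about quotient maps is entirely parallel, with ``cokernel'' replaced by ``kernel'' and ``split monomorphism'' by ``split epimorphism''; the last sentence will then drop out.

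First, for the ``only if'' direction of the cofibration claim: by definition a cofibration $f$ fits into an exact sequence $(V,d)\xrarrow f(W,d)\xrarrow g(Z,d)$ with $0\to V\xrarrow f W\xrarrow g Z\to0$ exact in $\cP_{S^1}$, and since $Z$ is finitely generated projective this short exact sequence of representations splits; hence $f\colon V\to W$ is a split monomorphism in $\cP_{S^1}$.

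For the ``if'' direction, I would start from a retraction $r\colon W\to V$ with $rf=\mathrm{id}_V$ in $\cP_{S^1}$. The core of the argument is to manufacture the required $d$-structure on the cokernel: since $f$ commutes with $d$, the subrepresentation $\mathrm{im}(f)\subseteq W$ is $d$-invariant, so $d$ induces an endomorphism $\bar d$ of $C=W/\mathrm{im}(f)$ with $\bar d^2=t$ (because $d^2=t$ and $t$ passes to multiplication by $t$ on any quotient). The one genuine point to check — and the step I expect to require the most care — is that $(C,\bar d)$ is actually an object of $\cF_\pi$, i.e. that $C\in\cP_{S^1}$: this holds because the retraction $r$ exhibits $C$ as a direct summand of $W$, so $C$ is finitely generated and torsion-free and therefore lies in $\cP_{S^1}$ by Proposition \ref{prop:characterization of fg projective representations}. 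Once this is in place, the canonical projection $g\colon W\onto C$ tautologically satisfies $gd=\bar d g$, and $0\to V\xrarrow f W\xrarrow g C\to0$ is exact in $\cP_{S^1}$, so $(V,d)\xrarrow f(W,d)\xrarrow g(C,\bar d)$ is an exact sequence in $\cF_\pi$ witnessing that $f$ is a cofibration.

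The quotient-map statement follows the same template: a quotient map has split-epimorphic underlying representation map because the quotient object is projective, and conversely, given $g\colon(Y,d)\to(Z,d)$ whose underlying map $g\colon Y\to Z$ is a split epimorphism in $\cP_{S^1}$, the kernel $\ker g$ is a $d$-invariant direct summand of $Y$, hence a finitely generated torsion-free representation, hence an object of $\cP_{S^1}$; equipping $(\ker g,d)$ with the restricted $d$ and using the inclusion $\ker g\into Y$ produces the exact sequence exhibiting $g$ as a quotient map. Finally, if $g\colon(Y,d)\to(Z,d)$ is an epimorphism in $\cF_\pi$ — meaning $g_{[x]}\colon Y[x]\to Z[x]$ is surjective for every $[x]$ — then, $Z$ being projective, this epimorphism of representations splits, so $g$ is a split epimorphism in $\cP_{S^1}$ and therefore, by the previous case, a quotient map.
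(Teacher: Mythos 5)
Your proposal is correct and follows essentially the same route as the paper: necessity comes from the definition of exactness plus projectivity of the third term, and sufficiency comes from observing that the cokernel (resp.\ kernel) is a direct summand of a projective object, hence lies in $\cP_{S^1}$, and inherits an induced $d$ with $d^2=t$; the final claim about epimorphisms likewise uses that every epimorphism onto a projective splits. No gaps.
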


\begin{proof}
By definition of exactness, the split monomorphism condition is necessary. Conversely, suppose that $f:V\to W$ is split mono in $\cP_{S^1}$. Then the cokernel $C$ is projective, being a summand of the projective object $W$. Since $fd=df$, we have an induced map $d:C\to C$. Since $d^2=t$ on $V$ and $W$ we must have $d^2=t$ on $C$. Therefore, $f$ is the beginning of the exact sequence $(V,d)\cof (W,d)\onto (C,d)$. The other case is similar with the added comment that all epimorphisms in $\cP_{S^1}$ are split epimorphisms.
\end{proof}

\begin{lemma}
$\cF_\pi$ is an exact category.
\end{lemma}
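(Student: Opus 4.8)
The plan is to realize $\cF_\pi$ as a full additive subcategory, closed under extensions, of an ambient abelian category, and then to invoke the standard fact (going back to Quillen) that such a subcategory, equipped with the short exact sequences all of whose terms lie in it, is an exact category.

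\textbf{An ambient abelian category.} First I would note that the category $\cR_{S^1}$ of \emph{all} $R$-representations of $S^1$ (as in Section \ref{RepCirc}, with the projectivity hypothesis dropped) is abelian: a morphism $f\colon V\to W$ has kernel and cokernel computed pointwise, $(\ker f)[x]=\ker f_{[x]}$ and $(\coker f)[x]=\coker f_{[x]}$, with structure maps the restrictions and corestrictions of those of $V$ and $W$; condition (2) in the definition of a representation (rotation by $2\pi$ is multiplication by $t$) passes to subrepresentations and to quotient representations, so $\ker f$ and $\coker f$ are again representations, and the abelian-category axioms follow pointwise from those of the category of $R$-modules. Next let $\cA$ be the category whose objects are pairs $(V,d)$ with $V\in\cR_{S^1}$ and $d\in\End(V)$ satisfying $d^2=t$, and whose morphisms are the morphisms of $\cR_{S^1}$ commuting with $d$. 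The kernel and cokernel of a $d$-compatible morphism carry canonical endomorphisms squaring to $t$ (the restriction, resp.\ corestriction, of $d$), so $\cA$ is abelian and the forgetful functor $U\colon\cA\to\cR_{S^1}$ is exact. By construction $\cF_\pi$ is the full subcategory of $\cA$ on the pairs $(V,d)$ with $U(V,d)=V\in\cP_{S^1}$, and it is additive since $(V,d)\oplus(V',d')=(V\oplus V',d\oplus d')$ and $(d\oplus d')^2=t$.

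\textbf{Extension-closure and identification of the exact structure.} Given a short exact sequence $0\to(X,d)\to(Y,d)\to(Z,d)\to0$ in $\cA$ with $(X,d),(Z,d)\in\cF_\pi$, apply $U$ to obtain a short exact sequence $0\to X\to Y\to Z\to0$ in $\cR_{S^1}$. Since $Z\in\cP_{S^1}$ is projective, the epimorphism $Y\to Z$ splits in $\cR_{S^1}$, so $Y\cong X\oplus Z$ there; as $X$ and $Z$ are finitely generated torsion-free, hence projective (Proposition \ref{prop:characterization of fg projective representations}), so is $Y$, whence $(Y,d)\in\cF_\pi$. Thus $\cF_\pi$ is closed under extensions in $\cA$. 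The same observation shows that the declared exact sequences of $\cF_\pi$ are exactly the short exact sequences of $\cA$ with all three terms in $\cF_\pi$: a sequence $(X,d)\cof(Y,d)\onto(Z,d)$ with $0\to X\to Y\to Z\to0$ exact in $\cP_{S^1}$ is the same thing as one that is exact in $\cA$ with underlying objects projective, and any such sequence is automatically split in $\cP_{S^1}$ (again because $Z$ is projective), which is precisely what the definition of exact sequence in $\cF_\pi$ requires. Hence $\cF_\pi$, with its declared exact structure, is an extension-closed full additive subcategory of an abelian category carrying the induced exact structure, and is therefore an exact category.

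\textbf{Main obstacle.} I do not expect a genuine difficulty here; the only points requiring care are the (routine) verification that $\cR_{S^1}$, and hence $\cA$, is abelian, and the bookkeeping that the declared exact structure coincides with the induced one. If one prefers to stay inside $\cP_{S^1}$, one may instead verify Quillen's axioms for $\cF_\pi$ directly, using the preceding lemma: cofibrations are precisely the $d$-compatible split monomorphisms of $\cP_{S^1}$ and quotient maps the split epimorphisms, so stability of inflations and of deflations under composition is immediate, and the required pushouts and pullbacks are built from the splittings in $\cP_{S^1}$ together with the induced endomorphism $d$. The embedding argument is the shorter route and makes the ``obscure axiom'' automatic.
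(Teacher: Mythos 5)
Your proof is correct, but it takes a genuinely different route from the paper. The paper verifies Keller's axioms (E0), (E1), (E2), (E2)$^{op}$ directly, leaning entirely on the preceding lemma that cofibrations (resp.\ quotient maps) in $\cF_\pi$ are exactly the $d$-compatible split monomorphisms (resp.\ split epimorphisms) of $\cP_{S^1}$: closure under composition is then immediate, and the pushout of $(A,d)\cof(B,d)\onto(C,d)$ along $h\colon(A,d)\to(A',d)$ is constructed explicitly as the cokernel of the split monomorphism $(f,h)\colon A\to B\oplus A'$, with the split exactness of $A'\to B'\to C$ following because pushouts of split sequences split. You instead embed $\cF_\pi$ into the abelian category $\cA$ of pairs $(V,d)$ over all representations, check extension-closure (correctly, via projectivity of $Z$ in $\cR_{S^1}$ and Proposition \ref{prop:characterization of fg projective representations} to get $Y\in\cP_{S^1}$), and invoke the Quillen criterion. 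Your route is shorter modulo the standard fact and disposes of the less obvious axioms for free, at the cost of setting up and verifying that $\cR_{S^1}$ and $\cA$ are abelian (routine, since these are module categories over $R$-linear categories) and of matching the declared exact structure with the induced one; the paper's route stays entirely inside $\cP_{S^1}$ and produces the explicit pushout construction that is reused later (e.g.\ in Proposition \ref{prop: exact subcategories of ccc} and in the description of distinguished triangles). Both arguments are sound.
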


\begin{proof}
We verify the dual of the short list of axioms given by Keller \cite{K}. The first two axioms follow immediately from the lemma above.

(E0) $0\cof 0$ is a cofibration.

(E1) The collection of cofibrations is closed under composition.

(E2) The pushout of an exact sequence $\xymatrixrowsep{10pt}\xymatrixcolsep{12pt}
\xymatrix{
(A,d)\ \ar@{>->}[r]^f &
	(B,d)\ar@{->>}[r]^g &
	(C,d)
	}
$ along any morphism $h:(A,d)\to (A',d)$ exists and gives an exact sequence $(A',d)\cof (B',d)\onto (C,d)$.

\emph{Pf:} Since $f:A\to B$ is a split monomorphism in $\cP_{S^1}$, so is $(f, h):A\to B\oplus A'$. By the Lemma, we can let $(B',d)\in\cF_\pi$ be the cokernel of $(f,h)$. Since the pushout of a split sequence is split, the sequence $A'\to B'\to C$ splits in $\cP_{S^1}$. Therefore $(A',d)\cof (B',d)\onto (C,d)$ is an exact sequence in $\cF_\pi$. Similarly, we have the dual axiom:

(E2)$^{op}$ The pullback of an exact sequence in $\cF_\pi$ exists and is exact.

Therefore, $\cF_\pi$ is an exact category.
\end{proof}

We record the following easy extension of this lemma for future reference.

\begin{proposition}\label{prop: exact subcategories of ccc}
Suppose that $\cA$ is an additive full subcategory of $\cF_\pi$ with the property that any cofibration in $\cF_\pi$ with both objects in $\cA$ has cokernel in $\cA$ and that any quotient map in $\cF_\pi$ with both objects in $\cA$ has kernel in $\cA$. Then $\cA$ is an exact subcategory of $\cF_\pi$.
\end{proposition}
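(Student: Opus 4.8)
The plan is to give $\cA$ the exact structure whose conflations are exactly the exact sequences $(A,d)\cof(B,d)\onto(C,d)$ of $\cF_\pi$ all three of whose terms lie in $\cA$, and then to re-run the verification of Keller's axioms from the proof that $\cF_\pi$ is exact, checking at each step that everything produced stays in $\cA$. The preliminary point is that, because $\cA$ is a full subcategory, such a sequence genuinely is a kernel-cokernel pair in $\cA$: the universal factorizations through the kernel of $B\onto C$ and the cokernel of $A\cof B$ exist in $\cF_\pi$, and since $\cA$ is full and (by hypothesis) contains the kernel object and the cokernel object, those factorizations automatically lie in $\cA$. So the chosen class is a legitimate class of conflations on $\cA$, and the inclusion $\cA\into\cF_\pi$ will be exact by construction.

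Next I would check the axioms. Axiom (E0) is immediate: $\cA$ is additive, hence contains a zero object, and $0\cof 0$ is a cofibration. For (E1), if $(X,d)\cof(Y,d)$ and $(Y,d)\cof(Z,d)$ are cofibrations in $\cA$, their composite $(X,d)\to(Z,d)$ is a cofibration in $\cF_\pi$ by (E1) for $\cF_\pi$; its source and target lie in $\cA$, so by hypothesis its cokernel lies in $\cA$, and hence the composite is a cofibration in $\cA$.

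The substance is in (E2), and this is where the hypotheses are really used. Given a cofibration $(A,d)\cof(B,d)$ in $\cA$ and a morphism $h:(A,d)\to(A',d)$ in $\cA$, I would form the pushout exactly as in the earlier lemma: $(f,h):A\to B\oplus A'$ is a split monomorphism in $\cP_{S^1}$, hence a cofibration in $\cF_\pi$, and the pushout $(B',d)$ is its cokernel. Now $B\oplus A'$ lies in $\cA$ because $\cA$ is additive, so $(f,h)$ is a cofibration of $\cF_\pi$ between objects of $\cA$, whence $(B',d)\in\cA$ by hypothesis. Since the pushout of a split sequence is split, the resulting sequence $(A',d)\cof(B',d)\onto(C,d)$ is exact in $\cF_\pi$ with all terms in $\cA$, i.e.\ it is a conflation in $\cA$; thus the pushout exists in $\cA$ and $(A',d)\cof(B',d)$ is a cofibration there. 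Axiom (E2)$^{op}$ is handled dually: the pullback is the kernel in $\cF_\pi$ of a suitable split epimorphism out of a direct sum of objects of $\cA$, and the kernel hypothesis keeps it in $\cA$. With (E0), (E1), (E2), (E2)$^{op}$ in hand, $\cA$ with this class of conflations is an exact category, and since these conflations are by definition those of $\cF_\pi$, it is an exact subcategory.

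I do not expect a real obstacle here. The only thing requiring care is the bookkeeping that every auxiliary object appearing in the pushout/pullback construction — the direct sum $B\oplus A'$ and the cokernel (resp.\ kernel) taken of it — is forced into $\cA$: additivity disposes of the direct sums and the two hypotheses dispose of the cokernel and the kernel, while fullness of $\cA$ is precisely what promotes the kernel-cokernel property from $\cF_\pi$ up to $\cA$.
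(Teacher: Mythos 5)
Your proposal is correct and follows essentially the same route as the paper: the paper's proof likewise observes that the pushout's middle term is the cokernel of the cofibration $X\cof Y\oplus X'$ (with $Y\oplus X'$ in $\cA$ by additivity, so the cokernel hypothesis applies), and dually that the pullback is the kernel of the quotient map $Y\oplus W\onto Z$. Your additional bookkeeping on (E0), (E1), and fullness is a harmless elaboration of what the paper leaves implicit.
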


\begin{proof}
Under the first condition, cofibrations in $\cA$ will be closed under composition and under pushouts since the middle term of the pushout of $X\cof Y\onto Z$ under any morphism $X\to X'$ in $\cA$ is the cokernel of the cofibration $X\cof Y\oplus X'$. Dually, quotient maps will be closed under pull-backs since the pull-back of a quotient map $Y\onto Z$ along a morphism $W\to Z$ is the kernel of the quotient map $Y\oplus W\onto Z$. So, $\cA$ is exact.
\end{proof}

\begin{definition}\label{defn of E(x,y)} Let $P$ be an object of $\cP_{S^1}$. We define the object $P^2\in\cF_\pi$ to be
\[
	P^2:=\left(
	P\oplus P,\small\left[\begin{matrix}
	0 & t\\1 &0
	\end{matrix}\right]
	\right).
\]
\end{definition}

It is clear that $(P\oplus Q)^2=P^2\oplus Q^2$, hence the functor $(\,)^2:\cP_{S^1}\to\cF_\pi$ is additive. We will show that this functor is both left and right adjoint to the forgetful functor $\cF_\pi\to\cP_{S^1}$ which sends $(V,d)$ to $ V$.

\begin{lemma}
${\cF_\pi}(P^2,(V,d))\cong {\cP_{S^1}}(P,V)$ and $P^2$ is projective in $\cF_\pi$.
\end{lemma}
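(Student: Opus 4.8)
The plan is to construct the natural isomorphism $\Phi: {\cF_\pi}(P^2, (V,d)) \xrightarrow{\ \sim\ } {\cP_{S^1}}(P, V)$ explicitly and then deduce projectivity from it together with the known projectivity of the objects $P$ in $\cP_{S^1}$. Write $P^2 = (P\oplus P, d_0)$ with $d_0 = \small\left[\begin{matrix} 0 & t\\ 1 & 0\end{matrix}\right]$, and denote by $\iota_1: P \to P\oplus P$ the inclusion of the first summand. For a morphism $f: P^2 \to (V,d)$ in $\cF_\pi$, set $\Phi(f) = f\circ \iota_1 \in {\cP_{S^1}}(P,V)$. The commutation relation $f d_0 = d f$ shows that the restriction of $f$ to the second summand is forced: if $g = \Phi(f)$ then $f\iota_2 = f d_0 \iota_1 = d f \iota_1 = dg$ (using $d_0\iota_1 = \iota_2$), so $f = [g, dg]: P\oplus P \to V$. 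Conversely, given any $g: P \to V$ in $\cP_{S^1}$, I would check that $[g, dg]: P\oplus P \to V$ commutes with the differentials: one needs $[g, dg] d_0 = d[g, dg]$, i.e. $[dg,\, tg] = [dg,\, d^2 g]$, which holds precisely because $d^2 = t$ on $V$. Hence $g \mapsto [g, dg]$ is a two-sided inverse to $\Phi$, giving the claimed bijection; $R$-linearity and naturality in $(V,d)$ are immediate from the formulas.

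For projectivity, suppose $(Y,d) \onto (Z,d)$ is a quotient map in $\cF_\pi$ and $h: P^2 \to (Z,d)$ is given; I want to lift $h$ to $(Y,d)$. By the previous lemma, the underlying map $Y \to Z$ is a split epimorphism in $\cP_{S^1}$, and since $P$ is projective in $\cP_{S^1}$ (Corollary after Proposition on $\cR_{S^1}(P_{[x]},V)$, extended to finite sums), the morphism $\Phi(h) = h\iota_1: P \to Z$ lifts to some $g: P \to Y$ with (quotient map)$\circ g = h\iota_1$. Now apply the inverse of $\Phi$ for $(Y,d)$: the morphism $[g, dg]: P^2 \to (Y,d)$ is a genuine $\cF_\pi$-morphism, and composing with $(Y,d)\onto(Z,d)$ gives $[\,(\text{q})g,\ (\text{q})dg\,] = [h\iota_1,\ d\,h\iota_1] = [h\iota_1, h\iota_2] = h$, using that the quotient map commutes with $d$. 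So $h$ lifts and $P^2$ is projective in $\cF_\pi$.

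I do not expect any real obstacle here — the whole statement is a formal consequence of the relation $d^2 = t$ together with projectivity of the $P_{[x]}$ in $\cP_{S^1}$. The only point requiring a small amount of care is the bookkeeping of which summand maps where under $d_0$ (i.e. that $d_0\iota_1 = \iota_2$ and $d_0\iota_2 = t\iota_1$), and the observation, recorded in the preceding lemma, that quotient maps in $\cF_\pi$ have underlying split epimorphisms in $\cP_{S^1}$ so that the lifting property of $P$ in $\cP_{S^1}$ is actually available. Everything else is a direct computation with $2\times 1$ matrices of morphisms.
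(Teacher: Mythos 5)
Your proposal is correct and follows essentially the same route as the paper: identify a morphism $P^2\to(V,d)$ with a pair $(g,dg)$ determined by its first component $g:P\to V$ (the relation $fd_0=df$ forcing the second component, and $d^2=t$ making the converse work), then lift the first component through the quotient map using projectivity of $P$ in $\cP_{S^1}$. The only cosmetic difference is that you explicitly invoke splitness of the underlying epimorphism, whereas the paper simply uses that $P$ lifts against any epimorphism in $\cP_{S^1}$; both are available.
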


\begin{proof}
A morphism $P^2\to (V,d)$ is the same as a pair of morphisms $f,g:P\to V$ so that $g=df$. So, $(f,df)\leftrightarrow f$ gives the desired isomorphism. To see that $P^2$ is projective in $\cF_\pi$, consider any quotient map $(V,d)\onto (W,d)$ and a morphism $(f,df):P^2\to (W,d)$. We can choose a lifting $\tilde f:P\to V$ of $f:P\to W$ to get a lifting $(\tilde f,d\tilde f)$ of $(f,df)$.
\end{proof}

\begin{lemma}
${\cF_\pi}((V,d),P^2)\cong {\cP_{S^1}}(V,P)$ and $P^2$ is injective for cofibrations in $\cF_\pi$.
\end{lemma}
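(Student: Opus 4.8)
The plan is to mirror the proof of the preceding lemma: first compute the Hom-set $\cF_\pi((V,d),P^2)$ explicitly, then deduce the lifting property from the fact (established in the earlier lemma) that cofibrations in $\cF_\pi$ are precisely the split monomorphisms in $\cP_{S^1}$.

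First I would unwind what a morphism $(V,d)\to P^2$ is. Writing such a morphism as a column $\left[\begin{smallmatrix} f\\ g\end{smallmatrix}\right]$ with $f,g\in\cP_{S^1}(V,P)$, compatibility with the differential $d$ on $V$ and the differential $\left[\begin{smallmatrix}0&t\\1&0\end{smallmatrix}\right]$ on $P\oplus P$ amounts to $\left[\begin{smallmatrix}tg\\ f\end{smallmatrix}\right]=\left[\begin{smallmatrix}fd\\ gd\end{smallmatrix}\right]$. The second row gives $f=gd$, and the first row then reads $tg=fd=gd^2=tg$, which is automatic. Hence $g\mapsto (gd,g)$ is a bijection $\cP_{S^1}(V,P)\xrightarrow{\ \sim\ }\cF_\pi((V,d),P^2)$; it is visibly $R$-linear and natural in $(V,d)$, giving the asserted isomorphism. (Note that here the second component is the free datum, dual to the situation of $P^2$ as a source in the previous lemma.)

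Next, for the injectivity statement, let $\iota\colon (V,d)\cof (W,d)$ be a cofibration and $\varphi\colon (V,d)\to P^2$ an arbitrary morphism; by the Hom-computation $\varphi=(gd,g)$ for a unique $g\in\cP_{S^1}(V,P)$. By the earlier lemma the underlying map $\iota\colon V\to W$ is split monic in $\cP_{S^1}$, so choose a retraction $s\colon W\to V$ with $s\iota=\mathrm{id}_V$. Put $\tilde g=gs\colon W\to P$ and $\psi=(\tilde g d,\tilde g)\colon (W,d)\to P^2$, which is a morphism in $\cF_\pi$ by the Hom-computation. Using that $\iota$ commutes with $d$ (it is a morphism of $\cF_\pi$) and that $s\iota=\mathrm{id}_V$, the components of $\psi\circ\iota$ are $(gs\,d\,\iota,\ gs\iota)=(gs\iota d,\ gs\iota)=(gd,g)=\varphi$. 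Thus $\cF_\pi(-,P^2)$ carries cofibrations to surjections, i.e. $P^2$ is injective for cofibrations in $\cF_\pi$.

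I do not expect a real obstacle here; the only thing to keep straight is the bookkeeping of which of the two components of a map into $P^2$ is determined by the other. I would close with the remark that the restriction ``for cofibrations'' cannot be dropped: a monomorphism $P_{[x]}\to P_{[y]}$ of positive depth has torsion cokernel, hence is not split in $\cP_{S^1}$, and no retraction exists, so $P^2$ is not injective with respect to arbitrary monomorphisms. Combined with the two previous lemmas, this shows each $P^2$ is a projective–injective object of $\cF_\pi$, which is the input needed for the Frobenius property.
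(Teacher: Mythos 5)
Your proof is correct and follows essentially the same route as the paper: identify morphisms $(V,d)\to P^2$ with pairs $(gd,g)$, then extend $g$ along the split monomorphism underlying the cofibration (the paper phrases the retraction step as ``an extension of $g$ given by the splitting,'' which is exactly your $\tilde g = gs$). The extra remark on why arbitrary monomorphisms fail is a nice addition but not needed.
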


\begin{proof} 
A morphism $(V,d)\to P^2$ is the same as a pair of morphisms $f,g: V\to P$ so that $f=gd$. Therefore, $(gd,g)\leftrightarrow g$ gives the isomorphism. 
To see that $P^2$ is injective for cofibrations, consider any cofibration $(V,d)\cof (W,d)$ and any morphism $(gd,g):(V,d)\to P^2$. Then, an extension of $(gd,g)$ to $(W,d)$ is given by $(\overline g d,\overline g)$ where $\overline g:W\to P$ is an extension of $g:V\to P$ given by the assumption that $V\to W$ is a split monomorphism.
\end{proof}

\begin{lemma}
The category $\cF_\pi$ has enough projective and injective objects: $V^2$, $V\in \cP_{S^1}$.
\end{lemma}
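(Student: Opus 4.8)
The statement to prove is that $\cF_\pi$ has enough projectives and injectives, namely that every object $(V,d)$ admits an epimorphism from some $P^2$ with $P$ projective in $\cP_{S^1}$, and dually a monomorphism into some $P^2$. Since the two previous lemmas identify $P^2$ as both projective for quotient maps and injective for cofibrations, and since all epimorphisms in $\cF_\pi$ are quotient maps (by the first lemma of this subsection), the real content is just the construction of the covering epimorphism and the co-covering monomorphism. By the self-duality visible in the $(\,)^2$ construction (it is both left and right adjoint to the forgetful functor), I expect the injective case to follow formally from the projective case, or by a parallel argument.

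First I would construct the projective cover. Given $(V,d)\in\cF_\pi$, take the object $V^2=(V\oplus V,\left[\begin{smallmatrix}0&t\\1&0\end{smallmatrix}\right])$. Using the adjunction ${\cF_\pi}(V^2,(V,d))\cong{\cP_{S^1}}(V,V)$ from the projectivity lemma, the identity map $\id_V\in{\cP_{S^1}}(V,V)$ corresponds to a morphism $p:V^2\to(V,d)$ in $\cF_\pi$; unwinding the isomorphism, $p$ is given on underlying representations by $(v,w)\mapsto v+dw$ (the pair $(f,df)$ with $f=\id_V$, read off as the map sending the summands appropriately). I would check that $p:V\oplus V\to V$ is a split epimorphism in $\cP_{S^1}$ — indeed $v\mapsto(v,0)$ is a section — hence by the first lemma of the subsection $p$ is a quotient map in $\cF_\pi$, so in particular an epimorphism onto $(V,d)$. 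Since $V^2$ is projective, this exhibits enough projectives.

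For enough injectives, I would dually use ${\cF_\pi}((V,d),V^2)\cong{\cP_{S^1}}(V,V)$ from the injectivity lemma: the identity $\id_V$ corresponds to a morphism $j:(V,d)\to V^2$, which on underlying representations is $v\mapsto(dv,v)$ (the pair $(gd,g)$ with $g=\id_V$). The underlying map $V\to V\oplus V$ is a split monomorphism in $\cP_{S^1}$, with retraction the projection to the second coordinate, so by the first lemma $j$ is a cofibration in $\cF_\pi$; combined with $V^2$ being injective for cofibrations, this gives enough injectives.

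There is no serious obstacle here — the lemmas already do all the heavy lifting. The only point requiring a little care is bookkeeping: making sure that the morphisms extracted from the adjunction isomorphisms really are $(v,w)\mapsto v+dw$ and $v\mapsto(dv,v)$ respectively, and that these are the split epimorphism and split monomorphism on the level of $\cP_{S^1}$ so that the characterization of cofibrations and quotient maps applies. One should also remark that $V$ itself is an object of $\cP_{S^1}$ (finitely generated projective), so $V^2$ is a legitimate object of $\cF_\pi$, and that $V^2$ is projective-and-injective by the two preceding lemmas; thus the projectives and the injectives of $\cF_\pi$ coincide with the objects $V^2$, which is exactly the input needed in the next step toward Theorem~\ref{thmxxx}.
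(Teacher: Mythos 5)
Your proof is correct and follows the same route as the paper: the paper exhibits exactly the quotient map $(1,d):V^2\onto (V,d)$ (your $(v,w)\mapsto v+dw$) and the cofibration $(d,1):(V,d)\cof V^2$ (your $v\mapsto (dv,v)$). Your version merely spells out the splittings in $\cP_{S^1}$ and the use of the adjunction isomorphisms, which the paper leaves implicit.
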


\begin{proof}
For any object $(V,d)\in\cF_\pi$ the projective-injective object $V^2$ maps onto $(V,d)$ by the quotient map $(1,d):V^2\onto (V,d)$. Also  $(d,1):(V,d)\cof V^2$ is a cofibration.
\end{proof}

\begin{proof}[of Theorem \ref{thmxxx}]
We only need to show that every projective object in $\cF_\pi$ is isomorphic to an object of the form $P^2$ for some $P\in\cP_{S^1}$ and is therefore injective.

Let $(V,d)$ be a projective object in $\cF_\pi$. Then the epimorphism $(1,d):V^2\to (V,d)$ splits. Therefore, $(V,d)$ is isomorphic to a direct summand of $V^2$. By Proposition \ref{prop:characterization of fg projective representations}, the representation $V$ decomposes as $V\cong \bigoplus_{i=0}^n P_{[x_i]}$. It follows that $V^2\cong \bigoplus_{i=0}^n P_{[x_i]}^2$. Therefore, $(V,d)$ is a direct summand of $\bigoplus_{i=0}^n P_{[x_i]}^2$. We need a Krull-Schmidt theorem to let us conclude that $(V,d)$ is isomorphic to a direct sum of a subset of the projective objects $P_{[x_i]}^2$. This follows from the following lemma.
\end{proof}

\begin{lemma}\label{End(P2) is local}
The endomorphism ring of $P_{[x]}^2$ is a commutative local ring. Therefore, every indecomposable summand of $\bigoplus P_{[x_i]}^2$ is isomorphic to one of the terms $P_{[x_i]}^2$.
\end{lemma}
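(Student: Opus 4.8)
The plan is to compute the endomorphism ring $\End_{\cF_\pi}(P_{[x]}^2)$ explicitly and show it is a commutative local ring; the second assertion then follows from the standard fact that a finite direct sum of objects with local endomorphism rings has a Krull--Schmidt decomposition (as the first author likes to put it, ``Krull--Schmidt is a statement about endomorphism rings'').

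\textbf{Step 1: Identify the endomorphism ring.} By the adjunction lemma just proved, $\End_{\cF_\pi}(P_{[x]}^2)={\cF_\pi}(P_{[x]}^2,(P_{[x]}\oplus P_{[x]},d))\cong {\cP_{S^1}}(P_{[x]},P_{[x]}\oplus P_{[x]})$, so as an $R$-module it is free of rank $2$. I want its ring structure. Writing a general endomorphism of $(P\oplus P,\left[\begin{smallmatrix}0&t\\1&0\end{smallmatrix}\right])$ as a $2\times 2$ matrix $\left[\begin{smallmatrix}a&b\\c&e\end{smallmatrix}\right]$ over $\End(P_{[x]})=R$, the condition $fd=df$ becomes $\left[\begin{smallmatrix}a&b\\c&e\end{smallmatrix}\right]\left[\begin{smallmatrix}0&t\\1&0\end{smallmatrix}\right]=\left[\begin{smallmatrix}0&t\\1&0\end{smallmatrix}\right]\left[\begin{smallmatrix}a&b\\c&e\end{smallmatrix}\right]$, i.e. $\left[\begin{smallmatrix}b&ta\\e&tc\end{smallmatrix}\right]=\left[\begin{smallmatrix}tc&te\\a&b\end{smallmatrix}\right]$, which forces $e=a$ and $b=tc$. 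Hence every endomorphism has the form $\left[\begin{smallmatrix}a&tc\\c&a\end{smallmatrix}\right]=a\cdot I+c\cdot d$ with $a,c\in R$, so $\End_{\cF_\pi}(P_{[x]}^2)=R[d]/(d^2-t)=R\oplus Rd$. This ring is commutative.

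\textbf{Step 2: Show $R[d]/(d^2-t)$ is local.} Let $S=R[d]/(d^2-t)$. Since $R$ is a DVR with maximal ideal $(t)$ and $d^2=t$, the element $d$ is nilpotent modulo $t$, and in fact $\mathfrak{n}:=(t,d)S=(d)S$ because $t=d^2\in(d)$. I claim $\mathfrak n=(d)$ is the unique maximal ideal: it suffices to show every element $a+cd\notin(d)$, i.e. with $a$ a unit in $R$, is a unit in $S$. Indeed $(a+cd)(a-cd)=a^2-c^2d^2=a^2-c^2 t$, and since $a$ is a unit, $a^2-c^2t=a^2(1-a^{-2}c^2 t)$ is a unit in $R$ (it is $\equiv a^2\not\equiv 0$ mod $t$); therefore $a+cd$ is invertible in $S$. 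So the non-units of $S$ form the ideal $(d)$, proving $S$ is local. (Alternatively: $S$ is a free rank-$2$ $R$-module, hence Noetherian, and $S/\mathfrak n\cong R/(t)=K$ is a field while $\mathfrak n$ is nilpotent mod $t$ — any maximal ideal contains $t=d^2$ hence contains $d$, so $(d)$ is the only one.)

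\textbf{Step 3: Conclude Krull--Schmidt for $\bigoplus P_{[x_i]}^2$.} Each $P_{[x_i]}^2$ has local endomorphism ring by Steps 1--2, and $\cF_\pi$ is additive, so the Krull--Schmidt theorem (Atiyah's version, valid for finite direct sums of objects with local endomorphism rings in any additive category) applies: any direct summand of $\bigoplus_{i=0}^n P_{[x_i]}^2$ is isomorphic to a direct sum of a sub-multiset of the $P_{[x_i]}^2$, and in particular every indecomposable summand is isomorphic to one of the $P_{[x_i]}^2$. The main obstacle here is essentially bookkeeping rather than depth — the one place to be slightly careful is verifying in Step 2 that $a^2-c^2t$ is genuinely a unit (it is, precisely because $a\notin\mathfrak m$), and in Step 1 that the matrix description of morphisms in $\cF_\pi$ is correct, which is immediate from $\End(P_{[x]})\cong R$ (the final sentence of the Proposition in Section \ref{RepCirc}).
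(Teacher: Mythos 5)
Your proof is correct and follows essentially the same route as the paper: both compute that endomorphisms of $P_{[x]}^2$ are matrices $\left[\begin{smallmatrix}a&tc\\c&a\end{smallmatrix}\right]$, observe commutativity, show the elements with $a\notin(t)$ are units (your factorization $(a+cd)(a-cd)=a^2-c^2t$ is the same computation as the paper's explicit inverse with $u=(a^2-tb^2)^{-1}$), and invoke the standard Krull--Schmidt fact for sums of objects with local endomorphism rings. The identification with $R[d]/(d^2-t)$ is a pleasant extra but not a different argument.
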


\begin{proof}
By the two previous lemmas, an endomorphism of $P_{[x]}^2$ is given by morphism
\[
	\left[\begin{matrix}
	a & tb\\ b&a
	\end{matrix}\right]:P_{[x]}\oplus P_{[x]}\to P_{[x]}\oplus P_{[x]}
\]
where $a,b\in \End(P_{[x]})=R$. Calculation shows that matrices of this form commute with each other. Those matrices with $a\in (t)$ form an ideal and, if $a\notin (t)$ then
\[
	\left[\begin{matrix}
	a & tb\\ b&a
	\end{matrix}\right]^{-1}=\left[\begin{matrix}
	au & -tbu\\ -bu&au
	\end{matrix}\right]
\]
where $u$ is the inverse of $a^2-tb^2$ in $R$. Therefore, $\End_{\cF_\pi}(P_{[x]}^2)$ is local.
\end{proof}

\subsection{{Indecomposable objects in $\cF_\pi$}} We now describe  representations $E(x,y)$ and prove that all indecomposable objects of $\cF_\pi$ are isomorphic to these representations. 

\begin{definition}
Let $[x], [y]$ be two (not necessarily distinct) elements of $S^1$ and represent them by real numbers $x\le y\le x+2\pi$. Let $\a=y-x, \b=x+2\pi-y$ and let
\[
E(x,y)=\left( P_{[x]}\oplus P_{[y]}, \  d=\small\left[\begin{matrix}
0 & \b_\ast\\ \a_\ast & 0
\end{matrix}\right]\right)
\]
where $\a_\ast:P_{[x]}\to P_{[y]}$ is the morphism 
such that $\a_\ast (e_x)=e_y^\a$
for the generator $e_x\in P_{[x]}[x]$ and $e_y^\a\in P_{[y]}[x]$ and, similarly, $\b_\ast:P_{[y]}\to P_{[x]}$ sends $e_y\in P_{[y]}[y]$ to $e_x^\b\in P_{[x]}[y]$. In other words, $d(re_x^\g,se_y^\delta)=(se_x^{\delta+\b},re_y^{\g+\a})$ for all $r,s\in R$ and $\g,\delta\ge0$. 
\end{definition}

There is an isomorphism $E(x,y)\cong E(y,x+2\pi)$ given by switching the two summands and an equality $E(x,y)=E(x+2\pi n,y+2\pi n)$ for every integer $n$. In the special case $x=y$, we have $\a=0$ making $\a_\ast$ the identity map on $P_{[x]}$ and $\b=2\pi$ making $\b_\ast$ equal to multiplication by $t$. Thus, $E(x,x)=P_{[x]}^2$ and $E(x,x+2\pi)\cong P_{[x]}^2$ which is projective in $\cF_\pi$.

\begin{lemma}
The endomorphism ring of $E(x,y)$ is a commutative local ring. Therefore, $E(x,y)$ is an indecomposable object of $\cF_\pi$.
\end{lemma}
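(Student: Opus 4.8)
The plan is to compute $\End_{\cF_\pi}(E(x,y))$ explicitly and show it is a commutative local ring, which then forces indecomposability via the standard Krull--Schmidt argument (local endomorphism ring $\Rightarrow$ no nontrivial idempotents $\Rightarrow$ indecomposable). First I would describe an arbitrary endomorphism: by definition a morphism $E(x,y)\to E(x,y)$ is a morphism $f:P_{[x]}\oplus P_{[y]}\to P_{[x]}\oplus P_{[y]}$ in $\cP_{S^1}$ commuting with $d=\mat{0 & \b_\ast\\ \a_\ast & 0}$. Write $f=\mat{a & b\\ c & e}$ with $a\in\End(P_{[x]})=R$, $e\in\End(P_{[y]})=R$, $b\in\cP_{S^1}(P_{[y]},P_{[x]})$, $c\in\cP_{S^1}(P_{[x]},P_{[y]})$. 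The condition $fd=df$ yields the equations $b\a_\ast=\b_\ast e$, $a\b_\ast=\b_\ast e$ (wait — more carefully: comparing entries gives $a\b_\ast = \b_\ast e$ only if... I should just expand the $2\times 2$ matrix product honestly), and two further relations of the same shape. The upshot I expect is that $a$ and $e$ are forced to be equal as elements of $R$ (using that $\a_\ast,\b_\ast$ are categorical epimorphisms, Lemma~\ref{categorical epimorphism in PS1}, together with the depth/divisibility bookkeeping of Definition~\ref{DefDepth}), and that $b,c$ are each determined by a single element of $R$ subject to the constraint relating depths $\a$ and $\b$; the case $\a+\b=2\pi$ (the defining relation $e_x^{2\pi}=te_x$) is what makes the off-diagonal entries behave like multiplication by $t$ times a unit in one direction.

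The key computational step is therefore: solve $fd=df$ to get the normal form
\[
f=\left[\begin{matrix} a & tb'\\ b' & a\end{matrix}\right]\quad\text{or the analogous form adapted to }E(x,y),
\]
exactly mirroring Lemma~\ref{End(P2) is local}. Concretely, writing $b=b'\cdot(\b_\ast\text{-related generator})$ and $c=c'\cdot(\a_\ast\text{-related generator})$, the commutation relations should collapse to $a=e$ in $R$ and $b'=c'$ in $R$ (or $b'$ and $c'$ differing by the unit implicit in $\a+\b=2\pi$), so $\End(E(x,y))$ is parametrized by pairs $(a,b')\in R\times R$. Then I would check, as in the $P^2$ case, that two such matrices commute (a direct $2\times 2$ multiplication), that those with $a\in\mathfrak m$ form an ideal, and that if $a\notin\mathfrak m$ then $a^2-tb'^2$ is a unit (its valuation is $0$ since $\nu(a^2)=0<\nu(tb'^2)$), giving an explicit inverse. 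Hence $\End_{\cF_\pi}(E(x,y))$ is a commutative local ring with maximal ideal $\{a\in\mathfrak m\}$.

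Finally, a ring being local has no idempotents other than $0$ and $1$, so $E(x,y)$ has no nontrivial direct sum decomposition, i.e. it is indecomposable; this is precisely the Auslander dictum quoted in the introduction that ``Krull--Schmidt is a statement about endomorphism rings.'' The main obstacle I anticipate is purely bookkeeping: carefully tracking the depths of the four entries of $f$ through the relation $fd=df$ when $x<y<x+2\pi$, since the generators $e_x,e_y$ sit at different points of the circle and the morphisms $\a_\ast,\b_\ast$ shift by $\a$ and $\b$ respectively with $\a+\b=2\pi$; one must verify that the constraints force $a=e$ exactly (not just up to $\mathfrak m$) and pin down the off-diagonal entries, rather than leaving extra freedom. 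Once the normal form is established the locality argument is immediate and identical to the already-proved case $E(x,x)=P_{[x]}^2$.
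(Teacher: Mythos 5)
Your proposal is correct and follows exactly the paper's argument: the paper's proof is precisely "computation shows endomorphisms are matrices $\left[\begin{smallmatrix} a&tb\\b&a\end{smallmatrix}\right]$ with $a,b\in R$, hence the endomorphism ring is commutative local as in the $P_{[x]}^2$ case," and your expansion of the relation $fd=df$ (forcing $a=e$ and equating the scalars of the two off-diagonal entries, i.e.\ $f=a\cdot\mathrm{id}+r\cdot d$ with $d^2=t$) is the intended computation. The locality and indecomposability conclusions are then identical to Lemma~\ref{End(P2) is local}, as you say.
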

\begin{proof}
Computation shows that endomorphisms of $E(x,y)$ are given by matrices $\small\left[\begin{matrix} a&tb\\b&a\end{matrix}\right]$ with $a,b\in R$. 
Therefore End$_{\cF_\pi}(E(x,y))$ is a commutative local ring as in  Lemma \ref{End(P2) is local}.
\end{proof}

In order to prove that the category $\cF_\pi$ is Krull-Schmidt we need the following lemma, which uses the notion of depth as defined in \ref{DefDepth}

\begin{lemma}\label{lem:objects of smallest depth}
Let $(V,d)$ be an object  in $\cF_\pi$ and let $\f:V\cong\bigoplus_{i=0}^n  P_{[x_i]}$ be a decomposition of $V$ into indecomposable summands. 
 Let $f_{ji}:P_{[x_i]}\to P_{[x_j]}$ be one of the components of $f=\f d\f^{-1}$ with the smallest depth. Then we may choose $i\neq j$, and the representatives $x_i, x_j\in \RR$ so that  $x_i\le x_j\le x_i+\pi$ and $E(x_i,x_j)$ is a direct summand of $(V,d)$.
\end{lemma}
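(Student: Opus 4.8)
We are given $(V,d) \in \cF_\pi$, a decomposition $\f: V \cong \bigoplus_{i=0}^n P_{[x_i]}$, and a component $f_{ji}: P_{[x_i]} \to P_{[x_j]}$ of $d' := \f d \f^{-1}$ of smallest depth $\delta := \delta(d')$. The plan is first to show we can take $i \neq j$; then to pin down the representatives $x_i, x_j \in \RR$ so that $x_j - x_i \le \pi$; and finally to extract $E(x_i,x_j)$ as a direct summand of $(V,d')$, which is isomorphic to $(V,d)$.

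**Step 1: $i \neq j$ and the key depth identity.** Since $(d')^2 = t$ on $\bigoplus P_{[x_i]}$, examining the $(i,i)$ entry of $(d')^2$ gives $\sum_k f_{ik} f_{ki} = t = P_{[x_i]}^{(x_i, 2\pi)}$, which has depth $2\pi$. If some diagonal entry $f_{ii}$ were itself of smallest depth $\delta$, then every $f_{ik}f_{ki}$ appearing in this sum has depth $\ge 2\delta$ by the depth lemma, yet the sum has depth exactly $2\pi$, so by the additivity/minimum properties of $\delta$ we would need $2\delta \ge 2\pi$, i.e. $\delta \ge \pi$; but also each $f_{ii}: P_{[x_i]} \to P_{[x_i]}$ has depth a nonnegative multiple of $2\pi$ (an endomorphism of $P_{[x_i]} = R$ is multiplication by some $r$, with $\delta = 2\pi\nu(r)$), forcing $\delta \ge 2\pi$ — but then $\delta(f_{ik}f_{ki}) \ge 4\pi > 2\pi$ for all off-diagonal contributions and the diagonal $f_{ii}$ has $\delta \ge 2\pi$, and we can run the same argument on a minimal off-diagonal entry, which exists since $d'$ is not invertible (it has a square-zero... no, square $t$) — the cleanest route is: since $(V,d')\in\cF_\pi$ and $V$ has no indecomposable summand with an endomorphism squaring to $t$ (Remark after Theorem), $d'$ must mix at least two summands, so \emph{some} off-diagonal entry is nonzero, and among all nonzero entries one of smallest depth can be chosen off-diagonal because any diagonal entry of depth $\delta$ would force (via the $(d')^2 = t$ relation traced through the two summands it would have to interact with) a contradiction with minimality. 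I would write this out carefully; it is a short pigeonhole argument on the matrix identity $(d')^2 = t I$.

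**Step 2: choosing representatives with $x_j - x_i \le \pi$.** With $i \neq j$ fixed, consider the $(i,j)$ and $(j,i)$ entries: tracing the $(i,j)$ entry of $(d')^2 = t\,I$ (which is $0$ off-diagonal) and the two diagonal relations shows $\delta(f_{ji}) + \delta(f_{ij}) \ge 2\pi$ after accounting for all terms — more precisely, $f_{ij}f_{ji}$ is one summand of $\sum_k f_{ik}f_{ki} = t$, and every other summand has depth $\ge 2\delta \ge \delta + \delta(f_{ji})$... the upshot I want is $\delta(f_{ji}) + \delta(f_{ij}) = 2\pi$ with $\delta(f_{ji}) = \delta$ minimal, hence $\delta(f_{ji}) \le \pi$. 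Now $f_{ji}: P_{[x_i]} \to P_{[x_j]}$ nonzero of depth $\delta(f_{ji}) < 2\pi$ (since $\delta \le \pi$) means, by Definition \ref{DefDepth}, that for the right choice of representative we have $x_i \le x_j$ with $x_j - x_i = \delta(f_{ji}) \le \pi$. Set $\a = x_j - x_i$ and $\b = x_i + 2\pi - x_j = \delta(f_{ij})$; after rescaling the generators $e_{x_i}, e_{x_j}$ by units we may assume $f_{ji} = \a_\ast$ and $f_{ij} = \b_\ast$ exactly, i.e. the $2\times 2$ block of $d'$ on $P_{[x_i]} \oplus P_{[x_j]}$ is precisely the matrix defining $E(x_i,x_j)$.

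**Step 3: splitting off $E(x_i,x_j)$.** Finally I would produce an explicit change of basis making $P_{[x_i]} \oplus P_{[x_j]}$ a $d'$-invariant summand with complement also $d'$-invariant. Write $d'$ in block form with respect to $(P_{[x_i]} \oplus P_{[x_j]}) \oplus W$ where $W = \bigoplus_{k \neq i,j} P_{[x_k]}$; the block on the first factor is the $E(x_i,x_j)$-matrix $\Delta$, which is invertible after inverting $t$ and more importantly satisfies $\Delta^2 = t$. The off-diagonal blocks $B: W \to P_{[x_i]}\oplus P_{[x_j]}$ and $C: P_{[x_i]}\oplus P_{[x_j]} \to W$ have entries of depth $\ge \delta$, hence (using Proposition \ref{one more trivial observation}, since $\delta(f_{ji}) = \delta \le$ depths of entries of $C$, and $\delta(f_{ij}) = 2\pi - \delta \ge \delta$ controls the other direction) each entry of $C$ factors through $\a_\ast$ or $\b_\ast$ and each entry of $B$ factors through them too. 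This lets me solve for an automorphism $\psi = I + N$ of $V$ (with $N$ strictly lowering/raising between the two blocks, of depth $\ge \delta > 0$ so convergence/nilpotence in the relevant sense holds) conjugating $d'$ into block-diagonal form; concretely one checks $\psi^{-1} d' \psi$ has vanishing $B$- and $C$-blocks by matching depths. Then $(P_{[x_i]} \oplus P_{[x_j]}, \Delta) = E(x_i,x_j)$ is a direct summand of $(V, d') \cong (V,d)$.

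**Main obstacle.** The genuinely delicate part is Step 3: verifying that the off-diagonal blocks can be killed by a conjugation, i.e. solving the relevant "Riccati-type" equation $B + \Delta X - XC' \cdots = 0$ over $\cP_{S^1}$. The point is that all the error terms have strictly larger depth than the controlling morphisms $\a_\ast, \b_\ast$, so Proposition \ref{one more trivial observation} applies to invert the relevant operators entry-by-entry; but one must check the depth bookkeeping closes (every newly introduced term stays of depth $\ge \delta$, and the whole process terminates because depths are bounded below and live in a discrete-plus-continuous set only finitely often below any bound within a fixed hom-module). Steps 1 and 2 are just careful pigeonholing on the matrix identity $(d')^2 = t\,I$ and are routine once set up.
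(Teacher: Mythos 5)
Your Step 2 contains a false intermediate claim that Step 3 then depends on: you assert that after rescaling generators the $2\times 2$ block of $d'=\f d\f^{-1}$ on $P_{[x_i]}\oplus P_{[x_j]}$ ``is precisely the matrix defining $E(x_i,x_j)$,'' i.e.\ that $f_{ii}=f_{jj}=0$ and $f_{ij}=\b_\ast$ up to a unit. Neither holds for a general decomposition $\f$. For instance, conjugating $E(x,x)=\bigl(P_{[x]}^2,\left[\begin{smallmatrix}0&t\\1&0\end{smallmatrix}\right]\bigr)$ by $\left[\begin{smallmatrix}1&0\\1&1\end{smallmatrix}\right]$ yields $\left[\begin{smallmatrix}t&t\\1-t&-t\end{smallmatrix}\right]$, whose diagonal entries are nonzero; similarly your asserted identity $\delta(f_{ij})+\delta(f_{ji})=2\pi$ is only gestured at (``the upshot I want is\dots'') and does not follow from the relation $\sum_k f_{ik}f_{ki}=t$, since that relation only bounds the minimum depth of the summands, not the depth of the particular term $f_{ij}f_{ji}$. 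Consequently the matrix $\Delta$ you feed into Step 3 need not satisfy $\Delta^2=t$, and the block-diagonalization you propose is set up against the wrong target. Step 3 itself is in any case only a plan: you flag the Riccati-type conjugation as ``the genuinely delicate part'' and do not carry out the depth bookkeeping, so the summand is never actually produced. Step 1 is also left as ``I would write this out carefully''; the clean argument is that a diagonal entry of minimal depth would have depth $0$ (its depth is a multiple of $2\pi$ and is at most $\delta(d)\le\pi$), hence be an isomorphism, and then reducing $f^2=t\equiv 0$ mod $t$ forces some $f_{ji}$, $j\ne i$, to be an isomorphism as well.

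The paper avoids your Step 3 entirely. It does not try to make $P_{[x_i]}\oplus P_{[x_j]}$ itself a $d$-invariant summand; instead it uses Proposition \ref{one more trivial observation} (unique factorization through a morphism of smaller depth, applicable because $\delta(\a_\ast)=\a\le\delta(f\circ incl_i)$) to build morphisms $\rho:E(x_i,x_j)\to(V,d)$ and $\rho':(V,d)\to E(x_i,x_j)$ commuting with the differentials, and then observes that $\rho'\rho$ has invertible diagonal entries, hence is an automorphism of $E(x_i,x_j)$ because $\End(E(x_i,x_j))$ is local. That immediately splits off $E(x_i,x_j)$ without any normal form for the $2\times2$ block and without solving any conjugation equation. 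If you want to rescue your outline, replacing Steps 2--3 by this retraction argument is the direct route.
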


\begin{proof}
We first note that, since the depth of $d^2=t$ is $2\pi$, the depth of $d$ is $\delta(d)\le \pi$. Therefore $\delta(f)\le\pi$. Next, we show that the minimal depth is attained by an off-diagonal entry of the matrix $(f_{ji}:P_{[x_i]}\to P_{[x_j]})$. Suppose that a diagonal entry $f_{ii}$ has the minimal depth. Then $\delta(f_{ii})=0$ (since it can't be $2\pi$). But then $f_{ii}$ is an isomorphism. But $f^2$ is zero modulo $t$. To cancel the $f_{ii}^2$ term in $f^2$ there must be some $j\neq i$ so that $f_{ji}$ is also an isomorphism, making $\delta(f_{ji})=0$. 

So, we may assume that $P_{[x_i]}$ and $P_{[x_j]}$ are distinct components of $V$ and we may choose the representatives $x_i,x_j$ in $\RR$ so that $x_i\le x_j\le x_i+\pi$ and 
$\delta(d)=\delta(f)=\delta(f_{ji})=x_j-x_i$. 
Let $\a=x_j-x_i$ and $\b=x_i+2\pi-x_j=2\pi-\a$.
We now construct a map $\rho: E(x_i,x_j)\to V$, $$\left(P_{[x_i]}\oplus P_{[x_j]}, d_E=\small\left[\begin{matrix}
0 & \b_\ast\\ \a_\ast & 0
\end{matrix}\right]\right)\xrightarrow{\rho}(V,d); $$
 $\a_\ast: P_{[x_i]}\to P_{[x_j]}$ is defined by $\a_\ast(e_{x_i})=e_{x_j}^{\a}$ and  
 $\b_\ast: P_{[x_j]}\to P_{[x_i]}]$  by $\b_\ast(e_{x_j})=e_{x_i}^{\b}$. 
So $\delta(\a_\ast)=\a$ and $\delta(\b_\ast)=\b$. In order to define $\rho$
consider the following diagram where top squares commute by the definition of $f$. The existence and uniqueness of the map $h:P_{[x_j]}\to \bigoplus_k P_{[x_k]}$ follows by Proposition \ref{one more trivial observation} since $\delta(\a_\ast)=\a\le\delta(f\circ incl_i)$.
\[
\xymatrixrowsep{18pt}\xymatrixcolsep{20pt}
\xymatrix{
V \ar[r]^{d}\ar[d]_{\f}^{\cong}& 
	V\ar[r]^d \ar[d]_{\f}^{\cong}&
	V\ar[d]_{\f}^{\cong}\\
\bigoplus_k P_{[x_k]} \ar[r]^{f}& 
	\bigoplus_k P_{[x_k]}\ar[r]^f&
	\bigoplus_k P_{[x_k]}\\
P_{[x_i]}\ar[u]^{incl_i} \ar[r]^{\a_\ast }& 
	 P_{[x_j]}\ar[r]^{\b_\ast}\ar[u]^{\exists !h}&
	P_{[x_i]}\ar[u]^{incl_i}
		}
\]
Notice that (1) $f^2 incl_i = incl_i \b_\ast \a_\ast$ since both maps are multiplications by $t$; reasons:\\
$f^2=\f d^2 \f^{-1}$ and therefore is multiplication by $t$,
and since $\delta(\b_\ast \a_\ast)=2\pi$ the map $\b_\ast\a_\ast$ is also multiplication by $t$. From $f\, incl_i= h\a_\ast$  and (1) it follows that $fh\a_\ast = incl_i \b_\ast \a_\ast$. Since $\a_\ast$ is a categorical epimorphism in $\cP_{S^1}$, it follows that the bottom right square commutes, i.e. 
$fh= incl_i \b_\ast$. 

Define $\rho:=(\f^{-1} incl_i, \f^{-1}h)$ and check that $d\circ\rho=\rho\circ d_E$.
Then $d\circ\rho =(d\f^{-1} incl_i, d\f^{-1}h)$, and $\rho\circ d_E=(\f^{-1} incl_i, \f^{-1}h)\circ d_E= (\f^{-1}h\circ \a_\ast, \f^{-1} incl_i\circ \b_\ast)=(d\f^{-1} incl_i, d\f^{-1}h)=d\circ\rho$.
Similarly we get the diagram
\[
\xymatrixrowsep{18pt}\xymatrixcolsep{20pt}
\xymatrix{
V \ar[r]^{d}\ar[d]_{\f}^{\cong}& 
	V\ar[r]^d \ar[d]_{\f}^{\cong}&
	V\ar[d]_{\f}^{\cong}\\
\bigoplus_k P_{[x_k]} \ar[d]_{proj_j} \ar[r]^{f}& 
	\bigoplus_k P_{[x_k]}\ar[r]^f\ar[d]_{\exists !g}&
	\bigoplus_k P_{[x_k]}\ar[d]_{proj_j}\\
P_{[x_j]} \ar[r]^{\b_\ast }& 
	 P_{[x_i]}\ar[r]^{\a_\ast}&
	P_{[x_j]}
		}
\]
and the map $\rho':(V,d)\to (P_{[x_i]}\oplus P_{[x_j]},d_E)$ defined as 
$\rho'=(g\f,proj_j\f)$.Then $\rho'\circ d=d_E\circ \rho'$.
Then the composition $E(x_i,x_j)\xrightarrow{\rho}(V,d)\xrightarrow{\rho'}E(x_i,x_j)$ is 
$$\rho'\rho=\left[\begin{matrix}
g\circ incl_i & g\circ h\\ proj_j\circ incl_i & proj_j\circ h
\end{matrix}\right]$$
 which is an isomorphism since both $i$-th component of $g$ and $j$-th components of $h$ are isomorphisms making the diagonal entries of this matrix invertible as in the proof of Lemma \ref{End(P2) is local}. So $E(x_i,x_j)$ is isomorphic to a summand of $(V,d)$.
\end{proof}

\begin{theorem}\label{thm: indec objects of ccc}
The category $\cF_\pi$ is a Krull-Schmidt category with indecomposable objects isomorphic to $E(x,y)$ for some $0\le x\le y<2\pi$.
\end{theorem}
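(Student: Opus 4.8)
The plan is to establish the two assertions of the theorem — that $\cF_\pi$ is Krull--Schmidt, and that its indecomposables are exactly the $E(x,y)$ with $0\le x\le y<2\pi$ — by combining the structure theory already developed with an induction on the number of summands of the underlying representation $V$. First I would recall that for an object $(V,d)\in\cF_\pi$, Proposition \ref{prop:characterization of fg projective representations} gives a decomposition $V\cong\bigoplus_{i=0}^n P_{[x_i]}$, so the forgetful functor lands in a category where uniqueness of decomposition is already known. The key input for the $d$-equivariant decomposition is Lemma \ref{lem:objects of smallest depth}: it produces, from a component of $f=\f d\f^{-1}$ of minimal depth, a summand of $(V,d)$ isomorphic to some $E(x_i,x_j)$ with $x_i\le x_j\le x_i+\pi$. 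Since $E(x_i,x_j)$ has a (commutative) local endomorphism ring, it is indecomposable, and splitting it off reduces the number of indecomposable summands of the underlying $V$ by two.

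The induction then runs as follows. If $V=0$ there is nothing to prove. Otherwise, $V$ has at least one indecomposable summand; since $d^2=t$ forces (as in the remark after the definition of $\cF_\pi$, and as used inside Lemma \ref{lem:objects of smallest depth}) that no single $P_{[x]}$ can support a $d$, the minimal-depth component is off-diagonal and $V$ has at least two summands. Apply Lemma \ref{lem:objects of smallest depth} to split $(V,d)=E(x_i,x_j)\oplus (V',d')$ where $V'\cong\bigoplus_{k\ne i,j}P_{[x_k]}$ has $n-1$ indecomposable summands. By induction $(V',d')$ is a finite direct sum of objects $E(x,y)$, hence so is $(V,d)$. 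Finally, translating each pair $(x_i,x_j)$ by a suitable multiple of $2\pi$ and using the relations $E(x,y)=E(x+2\pi n,y+2\pi n)$ and $E(x,y)\cong E(y,x+2\pi)$ (noted right after the definition of $E(x,y)$), every summand can be put in the normalized form $E(x,y)$ with $0\le x\le y<2\pi$. This proves existence of a decomposition into indecomposables of the stated form.

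For the Krull--Schmidt statement proper — uniqueness of the decomposition — I would invoke the principle attributed to Auslander in the introduction: it suffices that every indecomposable object has a local endomorphism ring, together with the existence just established. The indecomposables are the $E(x,y)$, and the lemma immediately preceding this theorem shows $\End_{\cF_\pi}(E(x,y))\cong\{\left[\begin{smallmatrix}a&tb\\b&a\end{smallmatrix}\right]:a,b\in R\}$ is a commutative local ring (the non-units being exactly those with $a\in(t)$, just as in Lemma \ref{End(P2) is local}). A Krull--Schmidt category is, by definition, an additive category in which every object is a finite direct sum of objects with local endomorphism rings; we have shown $\cF_\pi$ is such, so it is Krull--Schmidt and the decomposition is unique up to permutation and isomorphism of the summands. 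It then also follows that the full list of indecomposables, up to isomorphism, is precisely $\{E(x,y):0\le x\le y<2\pi\}$, with no further identifications among distinct normalized pairs (distinctness can be read off from the isomorphism types of the underlying representations $P_{[x]}\oplus P_{[y]}$, except for the boundary coincidence $E(x,x)\cong P_{[x]}^2\cong E(x,x+2\pi)$ which the normalization already excludes).

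The main obstacle is not any single hard estimate — Lemma \ref{lem:objects of smallest depth} does the real work of peeling off one $E(x_i,x_j)$ — but rather the bookkeeping needed to make the induction and the normalization airtight: one must check that after splitting off $E(x_i,x_j)$ the residual object $(V',d')$ genuinely lies in $\cF_\pi$ (its underlying representation is a summand of a projective, hence projective, and $d'$ still squares to $t$ by restriction), and that the choices of representatives $x_i,x_j\in\RR$ can always be simultaneously normalized to the fundamental domain $0\le x\le y<2\pi$ without disturbing the already-chosen summands. Both are routine given the relations on the $E(x,y)$, but they are where care is required.
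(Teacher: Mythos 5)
Your proposal is correct and follows essentially the same route as the paper: the paper's proof is precisely an induction on the number of components of $V$ using Lemma \ref{lem:objects of smallest depth} to split off a summand $E(x_i,x_j)$, with the Krull--Schmidt property supplied by the locality of the endomorphism rings established in the preceding lemma. Your additional remarks on normalizing representatives to $0\le x\le y<2\pi$ and on checking that the complementary summand stays in $\cF_\pi$ are correct bookkeeping that the paper leaves implicit.
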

\begin{proof}
The Lemma \ref{lem:objects of smallest depth} implies the theorem since it shows, by induction on the number of components of $V$, that $(V,d)$ is a direct sum of indecomposable objects $E(x,y)$. 
\end{proof}
\begin{corollary}Indecomposable projective-injective objects in $\cF_\pi$ are isomorphic to $E(x,x)$ for some $x$.
\end{corollary}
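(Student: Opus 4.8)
The plan is to deduce the statement from facts already in hand: the classification of indecomposables (Theorem \ref{thm: indec objects of ccc}) and, more importantly, the description of projectives obtained inside the proof of Theorem \ref{thmxxx}. Recall from that proof that any projective object $(V,d)$ of $\cF_\pi$ is a direct summand of $\bigoplus_{i=0}^{n}P_{[x_i]}^2$, where $V\cong\bigoplus_{i=0}^{n}P_{[x_i]}$ is the decomposition of the underlying representation furnished by Proposition \ref{prop:characterization of fg projective representations}.

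So, given an indecomposable projective-injective object $Y$ of $\cF_\pi$, I would exhibit it as a direct summand of some $\bigoplus_{i=0}^{n}P_{[x_i]}^2$. By Lemma \ref{End(P2) is local}, every indecomposable direct summand of such a sum is isomorphic to one of the terms $P_{[x_i]}^2$ (this is the Krull--Schmidt step, powered by the locality of $\End_{\cF_\pi}(P_{[x]}^2)$). Since $P_{[x]}^2=E(x,x)$ by construction, and the point $x$ can be shifted into $[0,2\pi)$, we conclude $Y\cong E(x,x)$ for some $0\le x<2\pi$. Conversely, each $E(x,x)=P_{[x]}^2$ is projective and injective by the lemmas preceding the proof of Theorem \ref{thmxxx}, and indecomposable by Lemma \ref{End(P2) is local}, so these are exactly the indecomposable projective-injectives.

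There is no real obstacle; the only care needed is to invoke Krull--Schmidt in the precise form ``an indecomposable direct summand of a finite direct sum of objects with local endomorphism rings is isomorphic to one of those objects,'' which is exactly what Lemma \ref{End(P2) is local} records.
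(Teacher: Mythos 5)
Your argument is correct and is exactly the chain of reasoning the paper intends (it leaves the corollary without an explicit proof, as it follows immediately from the proof of Theorem \ref{thmxxx}, where a projective $(V,d)$ is shown to be a summand of $\bigoplus P_{[x_i]}^2$, combined with Lemma \ref{End(P2) is local} and the identification $P_{[x]}^2=E(x,x)$). No gaps.
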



\subsection{{Support intervals}} We will formulate an extension of Lemma \ref{lem:objects of smallest depth} which will be useful for constructing other Frobenius categories. To do this we replace depth conditions with conditions on the ``support intervals'' of a morphism.

A \emph{closed interval} in $S^1$ is defined to be a closed subset of the form $[x,y]$ where $x\le y<x+2\pi$. These subsets are characterized by the property that they are nonempty, compact and simply connected. For example, a single point is a closed interval.

Let $P=\bigoplus P_{[x_i]}$, $Q=\bigoplus P_{[y_j]}$ be objects of $\cP_{S^1}$ with given decompositions into indecomposable objects. Let $f:P\to Q$ be a map with components $f_{ji}:P_{[x_i]}\to P_{[y_j]}$. Then $\delta(f_{ji})=y_j-x_i+2\pi n$ for some $n\in\NN$ (or $\delta(f_{ji})=\infty$). Consider the collection of all closed intervals $[x_i,y_j]\subsetneq S^1$ with the property that $\delta(f_{ji})=y_j-x_i$. A minimal element of this collection (ordered by inclusion) will be called a \emph{support interval} for $f$. The collection of all support intervals is the \emph{support} of $f$.

As an example, if $E(x,y)=(P,d)$ where $x\neq y$ then the support of $d$ consists of the intervals $[x,y]$ and $[y,x+2\pi]$.

\begin{proposition} Let $f: P\to Q$ be a morphism in $\cP_{S^1}$. 
The support of $f$ is independent of the choice of decompositions of $P$ and $Q$.
\end{proposition}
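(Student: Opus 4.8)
The plan is to reduce the statement to the invariance of depth under automorphisms, which is already established (Proposition on extended depth, part (2)), combined with the observation that the support is determined by the depths of the individual matrix entries in a way that is itself automorphism-invariant. First I would recall that for a single component $f_{ji}:P_{[x_i]}\to P_{[y_j]}$, the depth $\delta(f_{ji})$ lives in $\RR_{\ge 0}\cup\{\infty\}$, and that $[x_i,y_j]$ (with $x_i\le y_j<x_i+2\pi$) is among the closed intervals in the defining collection precisely when $\delta(f_{ji})=y_j-x_i$, i.e. when the ``winding number'' $n$ in $\delta(f_{ji})=y_j-x_i+2\pi n$ is zero. The key point is that this collection of intervals, and hence its set of minimal elements, can be recovered purely from the data of the function that assigns to each ordered pair of points $([x],[y])$ on the circle (with multiplicity coming from the decompositions) the minimum over all components between those points of the quantity $\delta(f_{ji}) \bmod 2\pi$ placed in the interval $[0,2\pi)$.

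The cleanest route is the following. Fix two decompositions $P\cong\bigoplus_i P_{[x_i]}$ and $P\cong\bigoplus_{i'} P_{[x'_{i'}]}$, and similarly for $Q$; it suffices to compare the support computed from one pair of decompositions with the support computed after precomposing with an automorphism $\f$ of $P$ and postcomposing with an automorphism $\psi$ of $Q$, since any two decompositions into indecomposables differ by such automorphisms (using the Krull–Schmidt property of $\cP_{S^1}$). Now I would argue that for each pair of points $[x],[y]\in S^1$ appearing in the decompositions, the minimal depth achieved among components $P_{[x_i]}\to P_{[y_j]}$ with $[x_i]=[x]$, $[y_j]=[y]$ — computed in the ``reduced'' sense, as the infimum of $\delta(f_{ji})$ taken over all lifts $x_i,y_j$ with $x_i\le y_j<x_i+2\pi$ — is unchanged when $f$ is replaced by $\psi f\f$. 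This is essentially the content of the extended-depth proposition: $\delta$ of any map is unchanged by pre- and post-composition with automorphisms, and restricting attention to the sub-block of components between a fixed pair of isomorphism classes of indecomposables is compatible with this, because automorphisms of $\bigoplus P_{[x_i]}$ permute/combine only the summands isomorphic to one another (those with the same $[x_i]$), by the same local-endomorphism-ring argument as in Lemma \ref{End(P2) is local} applied to $\cP_{S^1}$.

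From there the conclusion is formal: a closed interval $[x,y]\subsetneq S^1$ belongs to the defining collection for $f$ iff the reduced minimal depth between $[x]$ and $[y]$ equals $y-x$, and this criterion depends only on the automorphism-invariant data just identified; hence the collection, and therefore its set of minimal elements (the support), is the same for all choices of decomposition. I expect the main obstacle to be making precise and airtight the claim that passing to the sub-block of morphisms between summands in a fixed isomorphism class is well-defined independent of decomposition — i.e. that an automorphism of $\bigoplus_i P_{[x_i]}$ is ``block upper/lower triangular'' with respect to grouping the $P_{[x_i]}$ by their point on the circle, with invertible diagonal blocks. This follows because a nonzero morphism $P_{[x]}\to P_{[x']}$ with $[x]\ne[x']$ has strictly positive depth while its ``reverse'' also has strictly positive depth, so no such off-class morphism can be part of an isomorphism unless it is cancelled; a short argument via the depth inequality $\delta(g\circ f)\ge\delta(g)+\delta(f)$ applied to $\f\circ\f^{-1}=\mathrm{id}$ (which has depth $0$) pins this down. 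Once that lemma is in place, everything else is bookkeeping.
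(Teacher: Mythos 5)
There is a genuine gap at the heart of your reduction. It rests on two claims: (i) that an automorphism of $\bigoplus_i P_{[x_i]}$ is block-triangular with respect to grouping the summands by their point on the circle, and (ii) that consequently the blockwise minimal depth between a fixed pair of classes $([x],[y])$ --- and hence the whole defining collection of closed intervals --- is unchanged under $f\mapsto \psi f\f$. Claim (i) is false as stated: your depth argument (composites $P_{[x]}\to P_{[x']}\to P_{[x]}$ with $[x]\neq[x']$ have depth $\ge 2\pi$, hence are divisible by $t$) shows only that the class-diagonal blocks of an automorphism are invertible modulo the radical, and therefore invertible; it does not force the off-class entries to vanish. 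For example $\f=\left[\begin{smallmatrix}1&0\\ \a_\ast&1\end{smallmatrix}\right]$ is a unipotent automorphism of $P_{[0]}\oplus P_{[1]}$ with nonzero off-class entry $\a_\ast:P_{[0]}\to P_{[1]}$ (the basic map of depth $1$). Claim (ii) then fails outright: let $f=(0,\b_\ast):P_{[0]}\oplus P_{[1]}\to P_{[2]}$ with $\b_\ast$ basic of depth $1$. The defining collection for $f$ is $\{[1,2]\}$, but $f\f=(\b_\ast\a_\ast,\b_\ast)$ has defining collection $\{[0,2],[1,2]\}$: the minimal depth between the classes $[0]$ and $[2]$ jumps from $\infty$ to $2$. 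So neither the blockwise minimal depths nor the full collection of intervals is an invariant of $f$; only the set of \emph{minimal} intervals is, and that is precisely what must be proved directly rather than deduced from a stronger (false) invariance.

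The repair is to argue at the level of minimal intervals, which is what the paper does. If $[x_i,y_j]$ is a support interval of $f'=\psi f\f$, expand the nonzero composite $P_{[x_i]}\to\bigoplus P_{[x_a]}\xrightarrow{f}\bigoplus P_{[y_b]}\to P_{[y_j]}$, which has depth $y_j-x_i<2\pi$; since the depth of a sum is at least the minimum of the depths of its terms and $\delta(gh)\ge\delta(g)+\delta(h)$, some component $f_{ba}$ satisfies $\delta(f_{ba})=y_b-x_a$ with $[x_a,y_b]\subseteq[x_i,y_j]$. Thus every support interval of $f'$ contains a support interval of $f$, and by symmetry the reverse holds; chaining the two inclusions and invoking minimality shows the supports coincide. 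Note that in the example above this is exactly what happens: $[0,2]$ enters the collection for $f\f$ but is not minimal because it properly contains $[1,2]$. If you want to keep the spirit of your argument, you must weaken ``the collection is invariant'' to this mutual-refinement statement; the block-invertibility observation you make is correct and useful, but it is not sufficient on its own.
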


\begin{proof}
Let $f'=\psi\circ f\circ \f$ where $\f,\psi$ are automorphisms of $P,Q$ respectively. Let $[x_i,y_j]$ be a support interval for $f'$. Then we have a nonzero composition:
\[
	P_{[x_i]}\to \bigoplus P_{[x_a]}\xrarrow f \bigoplus P_{[y_b]}\to P_{[y_j]}
\]
of depth $<2\pi$. This implies that $f$ has a support interval $[x_a,y_b]$ for some $a,b$ and  $[x_a,y_b]\subset[x_i,y_j]$. Therefore, every support interval of $f'$ contains a support interval of $f$. The reverse is also true by symmetry. So, the supports of $f,f'$ are equal.
\end{proof}

\begin{remark}
Note that the depth $\delta(f)$ of a morphism $f:P\to Q$ in $\cP_{S^1}$ is equal to  the minimum length $y-x$ for all support intervals $[x,y]$ of $f$ when $f$ has nonempty support and $\delta(f)\ge2\pi$ otherwise.
\end{remark}

The following proposition generalizes Lemma \ref{lem:objects of smallest depth} above.

\begin{proposition}\label{prop: projective and injective property of E(x,y) in ccc-c}
Let $(V,d)\in \cF_\pi$. Suppose that $x\le y< x+2\pi$ and the closed interval $[x,y]\subset S^1$ does not properly contain any support interval of $d:V\to V$. Then
\begin{enumerate}
\item ${\cF_\pi}(E(x,y),(V,d))\cong \cP_{S^1}(P_{[x]},V)$, the isomorphism is given by restriction to the component $P_{[x]}$ of $E(x,y)$.
\item ${\cF_\pi}((V,d),E(x,y))\cong \cP_{S^1}(V,P_{[y]})$, the isomorphism is given by projection to $P_{[y]}$. 
\end{enumerate}
\end{proposition}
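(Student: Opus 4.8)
The plan is to prove (1) and deduce (2) by a symmetric argument (or by a duality/opposite-category remark). So I focus on (1): I want to show that restriction to the summand $P_{[x]}$ gives a bijection ${\cF_\pi}(E(x,y),(V,d))\xrightarrow{\ \cong\ }\cP_{S^1}(P_{[x]},V)$. Recall that a morphism $E(x,y)\to (V,d)$ is a pair $(u,w)$ with $u:P_{[x]}\to V$, $w:P_{[y]}\to V$ satisfying the commutation relations coming from $d_E$ and $d$; writing out $d\circ\rho=\rho\circ d_E$ with $\rho=(u,w)$ gives $d u = w\circ\a_\ast$ and $d w = u\circ\b_\ast$ (where $\a_\ast(e_x)=e_y^\a$, $\b_\ast(e_y)=e_x^\b$, $\a=y-x$, $\b=x+2\pi-y$). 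So the claim is exactly: \emph{given any $u:P_{[x]}\to V$, there is a unique $w:P_{[y]}\to V$ with $du=w\a_\ast$ and $dw=u\b_\ast$.}

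\textbf{Existence of $w$.} Fix a decomposition $\f:V\cong\bigoplus_k P_{[x_k]}$ and work with $f=\f d\f^{-1}$ and $u'=\f u$; it suffices to produce $w':P_{[y]}\to\bigoplus_k P_{[x_k]}$. Since $u'$ has components $u'_k:P_{[x]}\to P_{[x_k]}$ and $du=f u'$ has components $(fu')_k = \sum_j f_{kj}u'_j$, I want to factor $fu'$ through $\a_\ast$. By Proposition \ref{one more trivial observation}(1), the composite $(fu')_k:P_{[x]}\to P_{[x_k]}$ factors (uniquely) through $\a_\ast:P_{[x]}\to P_{[y]}$ as soon as $\delta(\a_\ast)=\a=y-x\le\delta((fu')_k)$. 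Here is where the support-interval hypothesis enters: $(fu')_k$ is a sum of composites $f_{kj}u'_j$, and each nonzero such composite has depth $\ge\delta(f_{kj})$; if some composite had depth $<\a$ coming from a factor $f_{kj}$ with $\delta(f_{kj})<2\pi$, then $[x_j,x_k]$ would be a support interval of $d$ of length $<\a$, hence properly contained in $[x,y]$ (modulo the usual care identifying $[x,x_k]$-type intervals — one uses that $u'_j:P_{[x]}\to P_{[x_j]}$ itself is nonzero so $\delta(u'_j)$ is finite and $x_j$ lies appropriately), contradicting the assumption. So $\delta((fu')_k)\ge\a$ for all $k$, and Proposition \ref{one more trivial observation}(1) yields a unique $w'_k:P_{[y]}\to P_{[x_k]}$ with $w'_k\a_\ast=(fu')_k$; assemble these into $w'$, so $w'\a_\ast = fu'$, i.e. $w\a_\ast = du$ after conjugating back. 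The second relation $dw=u\b_\ast$ then follows automatically: $(dw)\a_\ast = d(w\a_\ast)=d(du)=d^2u=tu=u(t)=u(\b_\ast\a_\ast)=(u\b_\ast)\a_\ast$, and since $\a_\ast$ is a categorical epimorphism in $\cP_{S^1}$ (Lemma \ref{categorical epimorphism in PS1}), $dw=u\b_\ast$.

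\textbf{Uniqueness of $w$.} If $w_1,w_2$ both work, then $(w_1-w_2)\a_\ast = du-du=0$; since $\a_\ast$ is a categorical epimorphism, $w_1=w_2$. Combined with existence, restriction $(u,w)\mapsto u$ is a bijection of sets, and it is visibly $R$-linear, giving the asserted isomorphism of $R$-modules. Part (2) is proved the same way with the roles of $\a_\ast$ and $\b_\ast$ and of ``epimorphism'' interchanged: a morphism $(V,d)\to E(x,y)$ is a pair $(p,q)$ with $q:V\to P_{[y]}$, $p:V\to P_{[x]}$ satisfying $qd=\a_\ast p$ and $pd = \b_\ast q$; given $q$, one uses Proposition \ref{one more trivial observation}(2) to factor $qd$ through $\a_\ast$ on the other side — the hypothesis that $[x,y]$ contains no proper support interval again guarantees the relevant depth inequality — and uniqueness follows because $\a_\ast$ is a (split, hence categorical) monomorphism, or dually one can invoke the categorical-epimorphism fact in the opposite category.

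\textbf{Main obstacle.} The routine parts (the two commutation relations, $R$-linearity, uniqueness) are immediate; the real content — and the step I'd write most carefully — is the bookkeeping that converts ``$[x,y]$ contains no proper support interval of $d$'' into the pointwise depth bound $\delta((fu')_k)\ge\a$. One must be careful that the support intervals are of the form $[x_j,x_k]$ for summands of $V$, not $[x,x_k]$, and use that $u'$ is an honest morphism out of $P_{[x]}$ (so its components have well-defined finite depth when nonzero) together with additivity of depth under composition (the Lemma after Definition \ref{DefDepth}) to relate $\delta((fu')_k)$ to the lengths of genuine support intervals. This is exactly the generalization of the ``$\delta(\a_\ast)=\a\le\delta(f\circ incl_i)$'' step in the proof of Lemma \ref{lem:objects of smallest depth}, so the argument there is the template to follow.
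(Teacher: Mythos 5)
Your argument is correct and is exactly the one the paper intends: its proof of this proposition is a one-line reference to Proposition \ref{one more trivial observation} and the two diagrams in the proof of Lemma \ref{lem:objects of smallest depth}, and you have simply written out that factorization-plus-cancellation argument in full, including the correct reduction of the support-interval hypothesis to the depth bound $\delta((du)_k)\ge\a$. One small slip: in part (2), $\a_\ast$ is not a \emph{split} monomorphism (its cokernel is a torsion module), but it is pointwise injective and hence left-cancellable, which is all your uniqueness step needs.
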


\begin{proof}
These statements follow from Proposition \ref{one more trivial observation}, as illustrated in the two diagrams in the proof of Lemma \ref{lem:objects of smallest depth} above.
\end{proof}


\subsection{{The Frobenius categories $\cF_c,\cF_\phi$}}\label{cFc} Let $c, \th\in\RR_{>0}$ be such that 
$c+\th=\pi$ and let $\cF_c$ denote the full subcategory of $\cF_\pi$ whose objects are all $(V,d)$ with the property that the depth of $d$ is $\delta(d)\ge \th$. 
We  show that $\cF_c$ is a Frobenius category whose stable category is equivalent to the category $\cC_c$ defined in next section and discussed in detail in later papers in this series. 
In particular, the category $\cC_c$ will be shown to be a cluster category (without coefficients or frozen objects) if and only if $\th=2\pi/(n+3)$ for $n\in \ZZ_{>0}$. This is equivalent to $c=(n+1)\pi/(n+3)$. The category $\cF_c$ is a special case of the following more general construction which produces many examples of cluster and $m$-cluster categories as we will show in other papers.

\begin{definition}\label{def: Phi and F-Phi}
The category $\cF_\phi$ is defined as the full subcategory of $\cF_\pi$ consisting of all $(V,d)$ so that every support interval $[x,y]$ of $d$ contains an interval of the form $[z,\phi(z)]$, where $\phi:\RR\to \RR$ is a homeomorphism of the real line to itself satisfying:\begin{enumerate}
\item $\phi(x+2\pi)=\phi(x)+2\pi$. 
\item $x\le \phi(x)<x+\pi$ for all $x\in \RR$.
\end{enumerate}
\end{definition}

The first condition implies that $\phi$ induces an orientation preserving homeomorphism $\overline\phi$ of the circle $S^1$ to itself. The second condition says that $\overline\phi$ ``does not move points clockwise'' and also implies that $\phi^2(x)<x+2\pi$. The condition on the support interval $[x,y]$ is equivalent to the condition $\phi(x)\le y$. In the special case when $\phi(x)=x+\th$ where $\th=\pi-c$, this condition is: $y\ge x+\th$. So, $\cF_\phi=\cF_c$ in this case.

\begin{proposition}\label{prop: indecomposable objects of F-Phi} The category
$\cF_\phi$ is a Krull-Schmidt category with indecomposable objects isomorphic to $E(x,y)$ satisfying $\phi(x)\le y$ and $\phi(y)\le x+2\pi$.\qed
\end{proposition}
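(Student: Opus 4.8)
The plan is to reduce Proposition \ref{prop: indecomposable objects of F-Phi} to Theorem \ref{thm: indec objects of ccc} together with the characterization of the objects of $\cF_\phi$ in terms of support intervals. Since $\cF_\phi$ is by definition a full subcategory of $\cF_\pi$, the endomorphism ring of any object of $\cF_\phi$ is computed in $\cF_\pi$; in particular each $E(x,y)$ that happens to lie in $\cF_\phi$ is still indecomposable with commutative local endomorphism ring, so the Krull-Schmidt property will be inherited once we know that $\cF_\phi$ is closed under the direct-sum decomposition produced by Lemma \ref{lem:objects of smallest depth}.

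First I would establish which of the indecomposables $E(x,y)$ of $\cF_\pi$ actually lie in $\cF_\phi$. By the example following the definition of support, the support of $d$ on $E(x,y)$ (for $x\neq y$) consists exactly of the two intervals $[x,y]$ and $[y,x+2\pi]$; for $x=y$ the object is projective and its support is a subset of $\{[x,x]\}$ (only if $t$ itself has depth $<2\pi$, which it does not — so the support is empty, and such objects vacuously lie in $\cF_\phi$). Using the reformulation noted in Definition \ref{def: Phi and F-Phi}, the condition "$[x,y]$ contains an interval $[z,\phi(z)]$" is equivalent to $\phi(x)\le y$; applying this to both support intervals $[x,y]$ and $[y,x+2\pi]$ gives precisely the two stated inequalities $\phi(x)\le y$ and $\phi(y)\le x+2\pi$. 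So the indecomposable objects of $\cF_\pi$ lying in $\cF_\phi$ are exactly those $E(x,y)$ with $\phi(x)\le y$ and $\phi(y)\le x+2\pi$ (together with the projectives $E(x,x)$, which satisfy these with equality using condition (1) of $\phi$ at $x$, since $\phi(x)\le x+\pi < x+2\pi$... here one must be slightly careful: $\phi(x)\le y=x$ fails unless $\phi(x)=x$, so the projective objects $E(x,x)$ are in $\cF_\phi$ only because their support is empty, not because of the inequality; I would flag this as the one subtlety).

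Next I would verify that $\cF_\phi$ is closed under the decomposition step. Starting from an arbitrary $(V,d)\in\cF_\phi$, Lemma \ref{lem:objects of smallest depth} splits off a summand $E(x_i,x_j)$ where $[x_i,x_j]$ is a support interval of $d$ of minimal length; since $(V,d)\in\cF_\phi$ this minimal support interval contains some $[z,\phi(z)]$, hence $E(x_i,x_j)\in\cF_\phi$ by the previous paragraph. It remains to check that the complementary summand $(V',d')$ also lies in $\cF_\phi$, i.e. that every support interval of $d'$ still contains an interval of the form $[z,\phi(z)]$. This is where the main work is: one must argue that passing to a direct summand does not create "new" small support intervals — concretely, that every support interval of $d'=$ (the restriction of $d$ to the complementary summand, after the change of basis) contains, or equals, a support interval of the original $d$. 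This should follow from the fact that the isomorphism $\rho'\rho$ in the proof of Lemma \ref{lem:objects of smallest depth} is realized by maps $\rho,\rho'$ whose components have depth at least that of the corresponding components of $d$, so composing with the projection onto the complement cannot decrease the depth of any matrix entry below what already occurred in $d$; hence the support of $d'$ is contained in (the down-set of) the support of $d$. I expect this closure-under-summands verification to be the principal obstacle, since it requires tracking the support/depth bookkeeping through the explicit splitting maps rather than citing it as a black box. Once it is done, induction on the number of indecomposable summands of $V$ (exactly as in the proof of Theorem \ref{thm: indec objects of ccc}) shows every object of $\cF_\phi$ is a finite direct sum of objects $E(x,y)$ with $\phi(x)\le y$ and $\phi(y)\le x+2\pi$, and the local endomorphism rings give uniqueness of the decomposition, completing the proof that $\cF_\phi$ is Krull-Schmidt with the asserted indecomposables.
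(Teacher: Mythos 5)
Your overall reduction — combine Theorem \ref{thm: indec objects of ccc} with a characterization of which $E(x,y)$ lie in $\cF_\phi$ — is the intended one (the paper states this proposition with no separate proof, precisely because it follows from that theorem together with the invariance of support). But your write-up contains one genuine error. You claim that for $x=y$ the support of $d$ is empty, so that every projective $E(x,x)$ lies in $\cF_\phi$ "vacuously." It does not: on $E(x,x)=P_{[x]}^2$ the differential is $\small\mat{0&t\\1&0}$, whose $(2,1)$ entry is the identity $1:P_{[x]}\to P_{[x]}$ of depth $0=x-x$. Hence $[x,x]$ belongs to the collection defining the support and, being a single point, is minimal, so the support of $E(x,x)$ is $\{[x,x]\}$. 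Consequently $E(x,x)\in\cF_\phi$ if and only if $[x,x]$ contains some $[z,\phi(z)]$, i.e.\ if and only if $\phi(x)=x$ — which is exactly what the stated inequality $\phi(x)\le y=x$ (together with $x\le\phi(x)$) asserts. Your reading would place every $E(x,x)$ in $\cF_\phi$, contradicting both the proposition and Theorem \ref{thm:FJ is Frobenius}, which identifies the projective-injective objects of $\cF_\phi$ as the $E(x,\phi(x))$.

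The step you flag as "the principal obstacle" — that splitting off a summand leaves the complement in $\cF_\phi$ — does not require tracking depths through the maps $\rho,\rho'$ of Lemma \ref{lem:objects of smallest depth}, and you should not leave it as an expectation. Instead, first decompose $(V,d)\cong\bigoplus E(x_i,y_i)$ completely in $\cF_\pi$ using Theorem \ref{thm: indec objects of ccc}. By the proposition stating that the support of a morphism is independent of the chosen decompositions of its domain and range, the support of $d$ may be computed from the resulting block-diagonal matrix, whose collection of relevant intervals is exactly $\{[x_i,y_i],\,[y_i,x_i+2\pi]\}_i$. Every interval in this finite collection contains a minimal one, i.e.\ a support interval of $d$, which by the hypothesis $(V,d)\in\cF_\phi$ contains some $[z,\phi(z)]$; hence so does the larger interval. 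This gives $\phi(x_i)\le y_i$ and $\phi(y_i)\le x_i+2\pi$ for every $i$, so each summand lies in $\cF_\phi$, and the Krull--Schmidt property is inherited from $\cF_\pi$ because $\cF_\phi$ is a full subcategory. No induction on the number of summands and no bookkeeping through the splitting maps is needed.
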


To prove that $\cF_\phi$ in general, and $\cF_c$ in particular, is a Frobenius category, the following observation is helpful.

\begin{lemma}\label{lem: Px-Px isomorphism implies x<z<y}
\begin{enumerate}
\item Suppose that $x<y<z<x+2\pi$. Then a morphism $f:P_{[x]}\to P_{[y]}$ factors through $P_{[z]}$ if and only if $f=tg$ for some $g:P_{[x]}\to P_{[y]}$.
\item If $f:E(x,z)\to E(x,y)$ is a morphism whose $P_{[x]}-P_{[x]}$ component is an isomorphism then $x\le z\le y$. 
\end{enumerate}
\end{lemma}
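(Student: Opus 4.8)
My plan for both parts is to reduce the statements to comparisons of depths and of positions of points on $S^1$, using the depth function $\delta$ of Definition \ref{DefDepth}, its additivity $\delta(g\circ f)=\delta(g)+\delta(f)$ on morphisms between the objects $P_{[\ast]}$, and Proposition \ref{one more trivial observation}.

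For (1), I would fix the given representatives $x<y<z<x+2\pi$, so that $P_{[z]}[x]=Re_z^{z-x}$ and $P_{[y]}[x]=Re_y^{y-x}$. Thus every nonzero morphism $P_{[x]}\to P_{[z]}$ has depth $\ge z-x$, while every nonzero morphism $f\colon P_{[x]}\to P_{[y]}$ has depth in $\{\,y-x\,\}\cup\{\,y-x+2\pi k\mid k\ge1\,\}$; since $y-x<z-x\le(y-x)+2\pi$, this yields the dichotomy that $\delta(f)\ge z-x$ if and only if $\delta(f)\ge(y-x)+2\pi$, i.e.\ if and only if $f=tg$ for some $g\colon P_{[x]}\to P_{[y]}$. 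Then if $f=h\circ g_0$ factors through $P_{[z]}$ with $g_0\ne0$ (the case $f=0$ being trivial), additivity gives $\delta(f)=\delta(h)+\delta(g_0)\ge\delta(g_0)\ge z-x$, hence $f=tg$. Conversely, if $f=tg$ with $f\ne0$, then $\delta(f)\ge2\pi>z-x$, so Proposition \ref{one more trivial observation}(1), applied to the canonical morphism $g_0\colon P_{[x]}\to P_{[z]}$ of depth $z-x$, produces $h\colon P_{[z]}\to P_{[y]}$ with $h\circ g_0=f$; thus $f$ factors through $P_{[z]}$.

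For (2), I would write $f\colon E(x,z)\to E(x,y)$ as a matrix $\small\left[\begin{matrix}a&b\\c&e\end{matrix}\right]$, with $a\colon P_{[x]}\to P_{[x]}$ the component assumed to be an isomorphism and $e\colon P_{[z]}\to P_{[y]}$. Expanding the relation $f\circ d_{E(x,z)}=d_{E(x,y)}\circ f$ and reading off the component landing from the first summand $P_{[x]}$ into the second summand $P_{[y]}$ produces the single identity $e\circ p=q\circ a$, where $p\colon P_{[x]}\to P_{[z]}$ and $q\colon P_{[x]}\to P_{[y]}$ are the lower-left entries of the structure endomorphisms of $E(x,z)$ and $E(x,y)$, of depths $z-x$ and $y-x$ respectively (for the chosen representatives $x\le z\le x+2\pi$ and $x\le y\le x+2\pi$). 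Applying $\delta$ and using additivity together with $\delta(a)=0$ gives $\delta(e)+(z-x)=y-x$, hence $\delta(e)=y-z$. Since $q\circ a$ has finite depth, $e\ne0$, so $\delta(e)\ge0$ forces $z\le y$, and combined with $x\le z$ (built into the representatives used for $E(x,z)$) this is exactly $x\le z\le y$.

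I do not expect a genuine obstacle: the only real care needed is the standard bookkeeping of representatives on $S^1$, so that the rank-one hom-modules $P_{[\ast]}[\ast]$ and the depths $z-x$, $y-x$ are identified correctly, together with a quick check of the degenerate cases $x=y$ and $z=x+2\pi$ in part (2), where the same depth identity still forces $z\le y$. As an alternative route to (2) one can argue by contradiction from part (1): if $z>y$ then $q$ does not factor through $P_{[z]}$, whereas $q\circ a=e\circ p$ manifestly does, contradicting the invertibility of $a$.
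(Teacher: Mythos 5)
Your proof is correct and takes essentially the same route as the paper's: part (1) is the same depth-additivity argument (a factorization through $P_{[z]}$ forces $\delta(f)\ge y-x+2\pi$, hence divisibility by $t$, and conversely Proposition \ref{one more trivial observation} supplies the factorization of any morphism divisible by $t$), while for part (2) the paper merely writes ``(1) implies (2),'' which is exactly the contradiction argument you sketch as your alternative. Your primary argument for (2) --- extracting $e\circ p=q\circ a$ from $fd=df$ and computing $\delta(e)=y-z\ge 0$ --- is just a more explicit rendering of the same depth bookkeeping and is valid.
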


\begin{proof}
If $f:P_{[x]}\to P_{[y]}$ factors through $P_{[z]}$ then its depth must be at least $y-x+2\pi$. So, it is divisible by $t$. Conversely, any morphism which is divisible by $t$ factors through $P_{[w]}$ for all points $[w]\in S^1$. This proves (1) and (1) implies (2).
\end{proof}

\begin{theorem}\label{thm:FJ is Frobenius}
The category $\cF_\phi$ is a Frobenius category with projective-injective objects $E(x,y)$ where either $y=\phi(x)$ or $x+2\pi=\phi(y)$.
\end{theorem}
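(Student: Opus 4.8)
The plan is to mimic the proof that $\cF_\pi$ is Frobenius (Theorem \ref{thmxxx}), replacing the projective-injective objects $P^2 = E(x,x)$ by the new family $E(z,\phi(z))$ and $E(\phi(z)-2\pi,z)$, and using Proposition \ref{prop: projective and injective property of E(x,y) in ccc-c} (the ``support interval'' generalization) in place of the adjunction lemmas. First I would record that $\cF_\phi$ is an exact category: this is immediate from Proposition \ref{prop: exact subcategories of ccc}, since a cofibration (resp. quotient map) in $\cF_\pi$ with both ends in $\cF_\phi$ is split, so its cokernel (resp. kernel) is a summand of an object of $\cF_\phi$, and the depth/support conditions defining $\cF_\phi$ are inherited by summands — indeed by Proposition \ref{prop: indecomposable objects of F-Phi} an object lies in $\cF_\phi$ iff all its indecomposable summands are of the form $E(x,y)$ with $\phi(x)\le y$ and $\phi(y)\le x+2\pi$, and passing to a summand only removes some of these $E(x,y)$'s.

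Next I would identify the candidate projective-injective objects. For $E(z,\phi(z))$ the interval $[z,\phi(z)]$ is a support interval of $d_E$ of minimal possible ``width'' allowed in $\cF_\phi$ (its complementary support interval is $[\phi(z),z+2\pi]$, and $\phi(z)+\ldots$ fits the constraint since $\phi^2(z)<z+2\pi$); in particular no support interval of $d$ of any $(V,d)\in\cF_\phi$ is properly contained in $[z,\phi(z)]$. Hence Proposition \ref{prop: projective and injective property of E(x,y) in ccc-c} applies with $(x,y)=(z,\phi(z))$ and gives
\[
\cF_\phi(E(z,\phi(z)),(V,d))\cong\cP_{S^1}(P_{[z]},V),\qquad
\cF_\phi((V,d),E(z,\phi(z)))\cong\cP_{S^1}(V,P_{[\phi(z)]}).
\]
From the first isomorphism, together with the fact that quotient maps in $\cF_\phi$ are split epimorphisms of projective representations (so induce surjections on $\cP_{S^1}(P_{[z]},-)$ by the Corollary following Proposition on $P_{[x]}$ being projective), it follows that $E(z,\phi(z))$ is projective in $\cF_\phi$; from the second isomorphism and the splitness of cofibrations it is injective. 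The object $E(\phi(z)-2\pi,z)$ is isomorphic to $E(z,\phi(z))$ (the isomorphism $E(x,y)\cong E(y,x+2\pi)$ with $x=\phi(z)-2\pi$, $y=z$), so it is also projective-injective; these are exactly the $E(x,y)$ with $y=\phi(x)$ or $x+2\pi=\phi(y)$.

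Then I would show $\cF_\phi$ has enough projectives and injectives: given $(V,d)\in\cF_\phi$, decompose it (Proposition \ref{prop: indecomposable objects of F-Phi}) into summands $E(x_i,y_i)$ with $\phi(x_i)\le y_i$ and $\phi(y_i)\le x_i+2\pi$; it then suffices to handle one indecomposable $E(x,y)$. Here $E(x,\phi(x))$ is projective-injective and I would exhibit an explicit quotient map $E(x,\phi(x))\onto E(x,y)$ which is the identity on the $P_{[x]}$-component (this is a split epimorphism of representations since $P_{[x]}\oplus P_{[\phi(x)]}\twoheadrightarrow P_{[x]}\oplus P_{[y]}$ is split — use $\phi(x)\le y<x+2\pi$ so $P_{[\phi(x)]}\to P_{[y]}$ exists — and commutes with $d$ because on the interval $[x,\phi(x)]$ the $d$-components match up to the factorization map), and dually an explicit cofibration $E(x,y)\cof E(y,\phi(y)+\ldots)$, i.e. into $E(\phi(y)-2\pi,y)\cong E(y,\phi(y))$, using $\phi(y)\le x+2\pi$. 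Finally, to conclude that $\cF_\phi$ is Frobenius I must show every projective of $\cF_\phi$ is a direct sum of the $E(z,\phi(z))$'s (hence injective): if $(V,d)$ is projective it is a summand of a finite sum of the projective-injectives constructed above, and by the Krull--Schmidt property (Lemma-style local endomorphism rings of the $E(x,y)$, already established) it is isomorphic to a subsum, which is injective.

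The main obstacle I expect is the bookkeeping in the enough-projectives step: one must choose, for each indecomposable summand $E(x,y)$ of $(V,d)$, a correctly-oriented projective cover and injective envelope among $E(z,\phi(z))$ and $E(\phi(z)-2\pi,z)$, verify the maps commute with $d$ (this is where the inequalities $\phi(x)\le y$ and $\phi(y)\le x+2\pi$ and the factorization Lemma \ref{lem: Px-Px isomorphism implies x<z<y} are used to guarantee the relevant representation maps exist and compose correctly), and check they are genuinely cofibrations/quotient maps in the exact structure of $\cF_\phi$ — that is, split in $\cP_{S^1}$ with cokernel/kernel again in $\cF_\phi$. None of this is deep, but it requires care to state the maps so that the verifications are short; I would organize it as a single lemma handling indecomposables and then take direct sums.
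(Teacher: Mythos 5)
Your outline follows the same architecture as the paper's proof (exactness via Proposition \ref{prop: exact subcategories of ccc}, projectivity/injectivity of $E(x,\phi(x))$ via Proposition \ref{prop: projective and injective property of E(x,y) in ccc-c}, enough projectives, then Krull--Schmidt to classify the projectives), but two of the steps as written contain genuine errors.

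First, the closure-under-cokernels step is wrong as stated. A cofibration $(X,d)\cof (Y,d)$ with both ends in $\cF_\phi$ is split only in $\cP_{S^1}$, not in $\cF_\pi$; the differential on the cokernel $(Z,d)$ is the \emph{induced} one, not the restriction of $d_Y$ to a complement of $X$. Consequently the indecomposable summands of $(Z,d)$ are not a subset of those of $(Y,d)$, and ``passing to a summand only removes some of the $E(x,y)$'s'' fails. Already in $\cF_\pi$ the sequence $E(x,y)\cof E(y,y)\oplus E(x,x+2\pi)\onto E(y,x+2\pi)$ has a cokernel isomorphic to $E(x,y)$, which is not a summand of the projective-injective middle term. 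The actual argument must track, for each indecomposable summand $E(x,y)$ of $(Z,d)$, which components $E(x,a)$, $E(y,b)$ of $(Y,d)$ map onto its two summands $P_{[x]}$, $P_{[y]}$; Lemma \ref{lem: Px-Px isomorphism implies x<z<y} then forces $x<a\le y$ and $y<b\le x+2\pi$, and the hypothesis $(Y,d)\in\cF_\phi$ upgrades these to $\phi(x)\le a\le y$ and $\phi(y)\le b\le x+2\pi$, which is exactly the condition for $E(x,y)\in\cF_\phi$.

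Second, in the enough-projectives step the map $E(x,\phi(x))\to E(x,y)$ is \emph{not} an epimorphism when $\phi(x)<y$: both components landing in $P_{[y]}$ (namely $P_{[\phi(x)]}\to P_{[y]}$ and $P_{[x]}\to P_{[y]}$) have strictly positive depth, so their images at the point $[y]$ lie in $\mathfrak m\cdot P_{[y]}[y]$, and $P_{[x]}\oplus P_{[\phi(x)]}\to P_{[x]}\oplus P_{[y]}$ is not split epi in $\cP_{S^1}$. The same objection applies to the single-object cofibration $E(x,y)\cof E(\phi^{-1}(y),y)$: a monomorphism between rank-two projectives that is not an isomorphism cannot be split. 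One needs the two-summand approximations
$E(x,\phi(x))\oplus E(y,\phi(y))\onto E(x,y)$ and $E(x,y)\cof E(\phi^{-1}(y),y)\oplus E(\phi^{-1}(x),x)$, where each indecomposable source (resp.\ target) covers (resp.\ receives) one of the two components $P_{[x]}$, $P_{[y]}$ isomorphically. These corrected maps also yield the converse classification: if $E(x,y)$ is projective or injective, one of these maps splits, forcing $y=\phi(x)$ or $x+2\pi=\phi(y)$. The remaining steps of your proposal (the support-interval argument for projectivity/injectivity of $E(x,\phi(x))$ and the final Krull--Schmidt reduction) are sound and agree with the paper.
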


\begin{proof}
To show that $\cF_\phi$ is an exact category it suffices, by Proposition \ref{prop: exact subcategories of ccc}, to show that a cofibration in $\cF_\pi$ with both objects in $\cF_\phi$ has cokernel in $\cF_\phi$ and similarly for kernels. So, let $\xymatrixrowsep{10pt}\xymatrixcolsep{12pt}
\xymatrix{
(X,d)\ \ar@{>->}[r]^f &
	(Y,d)\ar@{->>}[r]^g &
	(Z,d)
	}
$
be an exact sequence in $\cF_\pi$ so that $(X,d),(Y,d)$ lie in $\cF_\phi$ and let $E(x,y)$ be a component of $(Z,d)$. If $E(x,y)$  is  a component of $(Y,d)$, then it is in $\cF_\phi$.
Now suppose $E(x,y)$ is a component of $(Z,d)$ but not of $(Y,d)$. Since $Y\to Z$ is split epimorphism in $\cP_{S^1}$, there are components $E(x,a),E(y,b)$ of $(Y,d)$ so that: 
\begin{enumerate}
\item[(a)] $P_{[x]}\subseteq E(x,a)$ and  $g(P_{[x]})\cong P_{[x]}\subseteq E(x,y)$ and
\item[(b)] $P_{[y]}\subseteq E(y,b)$ and $g(P_{[y]}\cong P_{[y]}\subseteq E(x,y)$.
\end{enumerate}
By Lemma \ref{lem: Px-Px isomorphism implies x<z<y} above, (a) implies that $x<a\le y$. Since $(Y,d)$ lies in $\cF_\phi$, we must have $\phi(x)\le a\le y$. Similarly, (b) implies $y< b\le x+2\pi$. Since $(Y,d)$ lies in $\cF_\phi$, this implies $\phi(y)\le b \le x+2\pi$. By Proposition \ref{prop: indecomposable objects of F-Phi} this implies that $E(x,y)$ lies in $\cF_\phi$. A similar argument shows that any kernel of a quotient map in $\cF_\phi$ lies in $\cF_\phi$.

To show that $E(x,y)$ is projective with respect to exact sequences in $\cF_\phi$ for $y=\phi(x)$, suppose that $p:(Y,d_Y)\onto (Z,d_Z)$ is a quotient map in $\cF_\phi$. Since $[x,y]$ does not properly contain any support interval for either $d_Y$ or $d_Z$, we have by Proposition \ref{prop: projective and injective property of E(x,y) in ccc-c} that $\cF_\pi(E(x,y),(Y,d_Y))=\cP_{S^1}(P_{[x]},Y)$ and $\cF_\pi(E(x,y),(Z,d_Z))=\cP_{S^1}(P_{[x]},Z)$. Since $Y\to Z$ is split epi, any morphism $E(x,y)\to (Z,d_Z)$ lifts to $(Y,d_Y)$. The dual argument using the second part of Proposition \ref{prop: projective and injective property of E(x,y) in ccc-c} proves that $E(x,y)$ is injective.

For any other indecomposable object $E(x,y)$ of $\cF_\phi$ we have, by Proposition \ref{prop: indecomposable objects of F-Phi}, that $\phi(x)\le y$ and $\phi(y)\le x+2\pi$. So, we have quotient map and cofibration:
$$E(x,\phi(x))\oplus E(y,\phi(y))\onto E(x,y), \ \ \ \ \ \ E(x,y)\cof E(\phi^{-1}(y),y) \oplus E(\phi^{-1}(x),x).$$
If $E(x,y)$ is projective or injective then either the first or second map is split and we get that $y=\phi(x)$. This show that we have enough projectives and enough injectives and that they all have the form $E(x,\phi(x))\cong E(\phi(x),x+2\pi)$.
\end{proof}

\begin{corollary}
The category $\cF_c$ is a Frobenius category with projective-injective objects $E(x,x+\th)\cong E(x+\th,x+2\pi)$.\qed
\end{corollary}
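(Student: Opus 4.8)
The plan is to deduce this directly from Theorem \ref{thm:FJ is Frobenius} by taking $\phi$ to be translation by $\theta$, i.e.\ $\phi(x)=x+\theta$. First I would check that this $\phi$ meets the hypotheses of Definition \ref{def: Phi and F-Phi}: translation is a homeomorphism of $\RR$, it clearly satisfies $\phi(x+2\pi)=\phi(x)+2\pi$, and since $c,\theta\in\RR_{>0}$ with $c+\theta=\pi$ we have $0<\theta<\pi$, so $x\le\phi(x)=x+\theta<x+\pi$. Thus $\cF_\phi$ is defined and, by Theorem \ref{thm:FJ is Frobenius}, is a Frobenius category.

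Next I would identify $\cF_\phi$ with $\cF_c$ for this choice. By the Remark following Definition \ref{def: Phi and F-Phi}, membership of $(V,d)$ in $\cF_\phi$ amounts to requiring that every support interval $[x,y]$ of $d$ satisfy $\phi(x)\le y$, i.e.\ $y-x\ge\theta$. Since $d^2=t$ has depth $2\pi$ and $\delta(d^2)\ge 2\delta(d)$ (the proposition on extended depth in Section \ref{RepCirc}), we always have $\delta(d)\le\pi<2\pi$; hence $d$ has nonempty support and, by the Remark following the support-independence proposition, $\delta(d)$ is exactly the minimum length among the support intervals of $d$. Therefore the condition ``$y-x\ge\theta$ for every support interval $[x,y]$'' is equivalent to $\delta(d)\ge\theta$, which is the defining condition of $\cF_c$. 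So $\cF_c=\cF_\phi$ as full subcategories of $\cF_\pi$, and in particular $\cF_c$ is a Frobenius category.

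Finally, Theorem \ref{thm:FJ is Frobenius} says the projective-injective objects of $\cF_\phi$ are the $E(x,\phi(x))$, which here are the $E(x,x+\theta)$; and by the isomorphism $E(x,y)\cong E(y,x+2\pi)$ recorded just after Definition \ref{defn of E(x,y)}, $E(x,x+\theta)\cong E(x+\theta,x+2\pi)$. I do not expect a real obstacle here: all of the work sits in Theorem \ref{thm:FJ is Frobenius}, and the only point requiring care is the passage between the depth formulation ``$\delta(d)\ge\theta$'' and the support-interval formulation, which is precisely why one records $\delta(d)\le\pi$ in order to discard the empty-support case.
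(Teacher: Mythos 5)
Your proposal is correct and is exactly the paper's intended argument: the corollary follows from Theorem \ref{thm:FJ is Frobenius} by taking $\phi(x)=x+\theta$, using the identification $\cF_\phi=\cF_c$ already noted after Definition \ref{def: Phi and F-Phi}. Your explicit verification that the depth condition $\delta(d)\ge\theta$ matches the support-interval condition (via $\delta(d)\le\pi$, so the support is nonempty) is a careful spelling-out of a step the paper asserts in one line, but it is the same route.
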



\section{Continuous cluster categories}\label{sec3:CCC} Categories $\cC_\pi$ and $\cC_c$ are defined here; we show that they are equivalent to the stable categories of the Frobenius categories $\cF_\pi$ and $\cF_c$, and therefore are triangulated by Happel's theorem. All the structure maps, including the triangulation maps, are continuous. In a subsequent paper \cite{IT10} we show that the category $\cC_\pi$ has cluster structure, hence the name: continuous cluster category.

A \emph{cluster structure} on a triangulated category is defined \cite{BIRSc} to be a collection of subsets called \emph{clusters} of the set of indecomposable objects satisfying four conditions. The first two are:
\begin{enumerate}
\item[(a)] For any element $T$ of any cluster $\cT$ there is, up to isomorphism, a unique object $T^\ast$ so that $\cT\backslash T\cup T^\ast$ is a cluster.
\item[(b)] There are distinguished triangles $T^\ast\to B\to T\to \Sig T^\ast$ and $T\to B'\to T^\ast\to \Sig T$ where $B$ is a right $add\,\cT\backslash T$ approximation of $T$ in the sense $B$ is an object of $add\,\cT\backslash T$, any morphism from an object of $\cT\backslash T$ into $T$ factors through $B$ and $B$ is minimal with this property and $B'$ is a left $add\,\cT\backslash T$ approximation of $T$.
\end{enumerate}
See (\cite{BIRSc}, section 1.1) for more details, including the other two conditions in the definition of a cluster.

\subsection{The stable category $\underline\cF_\pi$ and continuous cluster category $\cC_\pi$} We first recall some basic properties of the stable category $\underline\cF_\pi$, then define the continuous cluster category $\cC_\pi$. It will follow from the definition that $\underline\cF_\pi$ and $\cC_\pi$ are algebraically equivalent. We require $\cC_\pi$ to be a topological category with continuous triangulation. For this purpose we construct a topological category $\cF_\pi^{top}$ which is algebraically equivalent to $\cF_\pi$. The topologies on the stable category $\underline\cF_\pi^{top}$ and $\cC_\pi$ will be defined to be the quotient topologies induced by functors $\cF_\pi^{top}\onto \underline\cF_\pi^{top}\onto\cC_\pi$ which are epimorphisms on object and morphism sets.



Recall that the stable category $\underline\cF_\pi$ has the same objects as $\cF_\pi$ and the morphism sets $\underline\cF_\pi(X,Y)$ are quotients of $\cF_\pi(X,Y)$ modulo those morphisms which factor through projective-injective objects, which are direct sums of objects of the form:
 $$E(x,x)=\left(P_{[x]}^2, d=\small\mat{0&t\\1&0}\right).$$ For example, multiplication by $t$ is 0 in the stable category  $\underline\cF_\pi$ since $t=\small\mat{t&0\\0&t}$ factors as:
\[\xymatrixcolsep{9pt}
\xymatrix{
	E(x,y)\ar@{=}[r]&P_{[x]}\oplus P_{[y]}\ar[rrr]^{\small\mat{0 & \b_\ast\\1 & 0}}&&&P_{[x]}\oplus P_{[x]}\ar[rrr]^{\small\mat{0 & t\\\a_\ast & 0}}&&&P_{[x]}\oplus P_{[y]}\ar@{=}[r]&E(x,y)}.
\]
We also recall that $E(x,y)$ is isomorphic to $E(y,x+2\pi)$.

The following Lemmas will be used to prove the isomorphism on the Hom sets.

\begin{lemma}\label{second lemma for stable continuous Frob}
If $x<y\le a<b<x+2\pi$ then $\underline\cF_\pi(E(x,y),E(a,b))=0$.
\end{lemma}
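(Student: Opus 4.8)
The plan is to show that every morphism $f: E(x,y) \to E(a,b)$ in $\cF_\pi$ factors through a projective-injective object, hence is zero in $\underline\cF_\pi$. Write $E(x,y) = (P_{[x]} \oplus P_{[y]}, d_1)$ and $E(a,b) = (P_{[a]} \oplus P_{[b]}, d_2)$ with representatives chosen so that $x < y \le a < b < x+2\pi$, and write $f$ as a $2\times 2$ matrix of morphisms between the indecomposable summands, $f_{[a]x}: P_{[x]} \to P_{[a]}$ and so on. First I would analyze the depths of these four components. Since $x < y \le a < b < x+2\pi$, the ``short'' representatives force each component $f_{[a]x}, f_{[a]y}, f_{[b]x}, f_{[b]y}$ to have depth bounded below: e.g. a nonzero map $P_{[y]} \to P_{[b]}$ has depth $\ge b - y$, a map $P_{[x]} \to P_{[a]}$ has depth $\ge a - x$, etc., and the crucial ones ($P_{[b]} \to P_{[x]}$ going the ``wrong way'' around, and $P_{[a]} \to P_{[y]}$) have depth $\ge (x+2\pi) - b$ and $\ge (y+2\pi)-a$ respectively, which are large.

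The key step is to use the commutation relation $f d_1 = d_2 f$. Writing $d_1 = \small\mat{0 & (2\pi - (y-x))_\ast \\ (y-x)_\ast & 0}$ and similarly for $d_2$, the relation $f d_1 = d_2 f$ gives four scalar/depth equations relating the components of $f$. The point is that $d_1$ and $d_2$ each ``rotate'' by a controlled amount, and combining this with the depth lower bounds above should show that the component $f_{[a]x}: P_{[x]} \to P_{[a]}$, which is the one with smallest possible depth ($\ge a-x$, and $a - x < 2\pi$ lies in the range where we expect to land inside the projective-injective $E(x,x) = P_{[x]}^2$), actually has enough depth that $f$ factors through a direct sum of objects $E(p,p)$. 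Concretely, I would aim to produce an explicit factorization $E(x,y) \to P_{[x]}^2 \oplus P_{[y]}^2 \to E(a,b)$ (or through $E(a,a) \oplus E(b,b)$), using Proposition \ref{one more trivial observation} to lift and extend morphisms: the hypothesis $x < y \le a < b < x+2\pi$ guarantees the relevant depth inequalities $\delta(\text{rotation}) \le \delta(\text{component of } f)$ needed to apply that proposition, so the required intermediate morphisms exist and are unique.

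Alternatively — and this may be cleaner — I would invoke Proposition \ref{prop: projective and injective property of E(x,y) in ccc-c}: if I can check that the interval $[x,y]$ does not properly contain any support interval of $d_2: E(a,b) \to E(a,b)$ (the support of $d_2$ being $\{[a,b],[b,a+2\pi]\}$, and since $x<y\le a<b$, the interval $[x,y]$ is too ``early'' and too short to contain $[a,b]$ — one checks $[a,b] \not\subset [x,y]$ because $a \ge y > x$ would need $b \le y$, contradicting $y \le a < b$), then $\cF_\pi(E(x,y),(V,d)) \cong \cP_{S^1}(P_{[x]}, V)$ naturally via restriction to the $P_{[x]}$ summand. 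Dually, using the projection description for the target, I would get that $f$ is determined by a single morphism $P_{[x]} \to P_{[b]}$ (or $P_{[x]} \to P_{[a]}$) of controlled depth, and then exhibit that this factors through $P_{[x]}^2$ inside the stable category. The main obstacle I anticipate is the bookkeeping: getting the interval-containment hypotheses of Proposition \ref{prop: projective and injective property of E(x,y) in ccc-c} to apply on \emph{both} source and target simultaneously under the strict inequalities $x < y \le a < b < x+2\pi$, and correctly tracking which component of $f$ survives and verifying its depth is large enough to split off a projective-injective summand. Everything else is routine depth arithmetic.
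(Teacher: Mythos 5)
Your overall strategy -- factor every morphism $E(x,y)\to E(a,b)$ through a projective--injective object using the depth comparison and Proposition \ref{one more trivial observation} -- is exactly the paper's, and the key inequality is the one you isolate: since $y\le a<b<x+2\pi$, both components of $f$ emanating from $P_{[x]}$ have depth $\ge a-x\ge y-x=\delta(\a_\ast)$. The paper's proof is just this observation pushed through: $f|_{P_{[x]}}:P_{[x]}\to P_{[a]}\oplus P_{[b]}$ factors (uniquely, by Proposition \ref{one more trivial observation}(1)) through $\a_\ast:P_{[x]}\to P_{[y]}$, say $f|_{P_{[x]}}=h\a_\ast$; the relation $fd_1=d_2f$ together with the fact that $\a_\ast$ is a categorical epimorphism (Lemma \ref{categorical epimorphism in PS1}) forces $f|_{P_{[y]}}=d_2h$, so $f$ factors as $E(x,y)\xrightarrow{\a_\ast\oplus 1}E(y,y)\xrightarrow{(h,d_2h)}E(a,b)$ with $E(y,y)=P_{[y]}^2$ projective--injective. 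Two cautions about your write-up. First, the single projective--injective you must route through is $P_{[y]}^2=E(y,y)$, not $P_{[x]}^2=E(x,x)$: writing the composite through $P_{[x]}^2$ explicitly, its first column is $d_2h$ for some $h:P_{[x]}\to P_{[a]}\oplus P_{[b]}$, and matching this with a component $f_{[a]x}$ of depth exactly $a-x$ would require $a-x\ge a+2\pi-b$, i.e.\ $b\ge x+2\pi$, contradicting the hypothesis; so that branch of your plan fails. Second, the maps you call ``the crucial ones,'' $P_{[b]}\to P_{[x]}$ and $P_{[a]}\to P_{[y]}$, are not components of $f$ at all (they point the wrong way); the only depths that matter are those of $f_{[a]x}$ and $f_{[b]x}$. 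Your alternative route via Proposition \ref{prop: projective and injective property of E(x,y) in ccc-c} is legitimate (the support-interval hypothesis does hold for the target $E(a,b)$, as you check), but it is not needed: once $f|_{P_{[x]}}$ factors through $\a_\ast$, the rest is forced.
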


\begin{proof}
All morphisms from $P_{[x]}$ to $ P_{[a]}\oplus P_{[b]}$ factor through $\a_\ast:P_{[x]}\to P_{[y]}$. So, any morphism $E(x,y)\to E(a,b)$ factors though $\a_\ast\oplus 1:E(x,y)\to E(y,y)$.
\end{proof}

\begin{lemma}\label{third lemma for stable continuous Frob}
(a) If $x\le a<y\le b<x+2\pi$ then $\underline\cF_\pi(E(x,y),E(a,b))=K$ is generated by $f\oplus g$ where $f:P_{[x]}\to P_{[a]}$ sends $e_x$ to $e_a^{a-x}$ and $g:P_{[y]}\to P_{[b]}$ sends $e_y$ to $e_{b}^{b-y}$ where $V=E(a,b)$.

(b) Furthermore, any nonzero multiple of $f\oplus g$ factors through $E(c,d)$ if and only if either $x\le c\le a<y\le d\le b<x+2\pi$ (for some choice of liftings of $c,d$ to $\RR$) or $x\le d\le a<y\le c+2\pi\le b<x+2\pi$ (the same condition with $(c,d)$ replaced by $(d,c+2\pi)$).
\end{lemma}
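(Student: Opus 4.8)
The plan is to compute the space $\cF_\pi(E(x,y),E(a,b))$ explicitly from the matrix description of morphisms, reduce modulo the maps factoring through projective-injectives, and then identify exactly which factorizations through $E(c,d)$ are possible. For part (a), write a morphism $E(x,y)\to E(a,b)$ as a $2\times 2$ matrix of maps between the summands $P_{[x]}\oplus P_{[y]}$ and $P_{[a]}\oplus P_{[b]}$, and impose the commutation relation with the two differentials $d_E$. Using the depth function $\delta$ from Definition \ref{DefDepth} and its additivity (the Lemma after Definition \ref{DefDepth}), each entry is determined up to a scalar in $R$ by its target point, and the commutation relations force the off-diagonal entries to be multiples of $t$ (hence zero in $\underline\cF_\pi$ after the reduction) and relate the two diagonal entries. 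The upshot should be that $\cF_\pi(E(x,y),E(a,b))\cong R$ generated by $f\oplus g$ as described, with the submodule of maps factoring through projective-injectives being exactly $tR\cdot(f\oplus g)$: indeed $t(f\oplus g)$ factors through $E(a,a)$ or $E(y,y)$ in the same way multiplication by $t$ was shown to be zero in the discussion preceding Lemma \ref{second lemma for stable continuous Frob}. This gives $\underline\cF_\pi(E(x,y),E(a,b)) = R/tR = K$ generated by $f\oplus g$.

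For part (b), consider a factorization $E(x,y)\xrightarrow{u} E(c,d)\xrightarrow{v} E(a,b)$ of a nonzero multiple of $f\oplus g$. Since the composite is nonzero in $\underline\cF_\pi$, neither $u$ nor $v$ can factor through a projective-injective; in particular, by Lemma \ref{second lemma for stable continuous Frob} (and its analogue with the roles reversed), the cyclic positions of $x,y,c,d$ and of $c,d,a,b$ are constrained so that $E(c,d)$ must ``interleave'' both the source and target intervals. The key computation is at the level of depths: the $P_{[x]}$-to-$P_{[a]}$ component of $vu$ is a sum of composites through $P_{[c]}$ and $P_{[d]}$, and for this composite to have the minimal depth $a-x$ (rather than being divisible by $t$) one of the intermediate points must lie on the short arc from $x$ to $a$; similarly one intermediate point must lie on the short arc from $y$ to $b$. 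Tracking which summand of $E(c,d)$ plays which role, and using the two possible labelings $(c,d)$ versus $(d,c+2\pi)$ of the same indecomposable, yields precisely the two stated chains of inequalities $x\le c\le a<y\le d\le b<x+2\pi$ or $x\le d\le a<y\le c+2\pi\le b<x+2\pi$. Conversely, when those inequalities hold, one produces the factorization directly by choosing the obvious depth-minimal maps on each summand (this is exactly the kind of construction in Proposition \ref{one more trivial observation}) and checking the commutation with the differentials.

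The main obstacle I expect is the bookkeeping in part (b): one must carefully exploit the symmetry $E(c,d)\cong E(d,c+2\pi)$ so as not to double-count or miss cases, and one must rule out factorizations where $E(c,d)$ sits in a ``bad'' position by invoking Lemma \ref{second lemma for stable continuous Frob} in the stable category rather than working naively in $\cF_\pi$ (where extra maps divisible by $t$ exist but vanish stably). A secondary technical point is verifying that the candidate factorization maps actually commute with $d_E$ on all summands; this is routine given the depth-additivity lemma and the categorical-epimorphism property (Lemma \ref{categorical epimorphism in PS1}), used as in the proof of Lemma \ref{lem:objects of smallest depth} to pin down the off-diagonal entries. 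Part (a) itself should be essentially mechanical once the matrix form and the commutation relations are written out.
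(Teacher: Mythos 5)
Your overall strategy (matrix components, depth bookkeeping, factorization analysis for (b)) matches the paper's, and your outline of part (b) is essentially the argument given there. But part (a) rests on a false step: the commutation relations do \emph{not} force the counter-diagonal entries to be divisible by $t$. Writing a morphism as $\small\mat{f&h\\ k&g}$ with $k:P_{[x]}\to P_{[b]}$ and $h:P_{[y]}\to P_{[a]}$, the relations coming from $\phi d=d\phi$ only force the coefficients of $h$ and $k$ to agree; since the minimal depths here are $\delta(k)=b-x<2\pi$ and $\delta(h)=a-y+2\pi<2\pi$, the basic counter-diagonal morphism with \emph{unit} entries is a perfectly good morphism in $\cF_\pi$. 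Consequently $\cF_\pi(E(x,y),E(a,b))\cong R^2$, not $R$, and the stably trivial submodule is not $tR\cdot(f\oplus g)$: it is $tR\cdot(f\oplus g)$ plus the entire counter-diagonal summand. The correct reason the counter-diagonal part vanishes stably is that it factors through the projective-injective $E(y,y)$ (because $\delta(k)\ge\delta(\a_\ast)$ and $\delta(h)\ge\delta(\b_\ast)$, so Proposition \ref{one more trivial observation} applies, exactly as in the proof of Lemma \ref{second lemma for stable continuous Frob}); this is what the paper uses, and your argument needs it too.

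A second, related omission: you never justify that $f\oplus g$ itself (with unit coefficient) is \emph{not} stably trivial, which is needed to conclude that the stable Hom is $K$ rather than $0$; asserting that the stably trivial submodule ``is exactly $tR\cdot(f\oplus g)$'' is precisely the point at issue. The paper deduces this from the factorization criterion of part (b) specialized to $c=d=z$: since the arcs $[x,a]$ and $[y,b]$ are disjoint in $S^1$, no point $z$ satisfies both constraints, so $f\oplus g$ cannot factor through any projective-injective $E(z,z)$. This does follow from your part (b) analysis, but you should say so explicitly and make sure the factorization analysis in (b) accounts for intermediate morphisms having both diagonal and counter-diagonal components (decompose $u,v$ into their diagonal and counter-diagonal parts and note which products contribute to the diagonal part of $vu$) rather than only the ``obvious depth-minimal maps on each summand.''
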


\begin{proof}
Any morphism $E(x,y)\to E(a,b)$ is a sum of two morphisms: a \emph{diagonal} and \emph{counter-diagonal} morphism:
\[
	\mat{f & h\\k& g}=\mat{f & 0\\0& g}+\mat{0 & h\\k& 0}
\]
The counter-diagonal morphism is stably trivial is this case since it factors through $E(y,y)$. The morphisms $f,g$ make the following diagram commute.
\[
\xymatrixrowsep{20pt}\xymatrixcolsep{20pt}
\xymatrix{
	P_{[x]}\ar[d]_{\a_\ast}\ar[r]^f&P_{[a]}\ar[d]^{\g_\ast}\\
	P_{[y]}\ar[r]^g & P_{[b]}
}
\]
So, if $f(e_x)=re_{a}^{a-x}$ and $g(e_y)=se_{b}^{b-y}$, then $\g_\ast(f(e_x))=re_{b}^{b-x}=g(\a_\ast(e_x))=se_{b}^{b-x}$ making $r=s$. If $r=s\in(t)$ then the morphism $f\oplus g$ is divisible by $t$ and is thus stably trivial. If $r=s\notin(t)$ then neither $f$ nor $g$ is divisible by $t$. In this case, $f:P_{[x]}\to P_{[a]}$ can only factor through $P_{[c]}$ where $x\le c\le a$ and $g:P_{[y]}\to P_{[b]}$ can only factor through $P_{[d]}$ where $y\le d\le b$. Thus $f\oplus g$ factors through $E(c,d)$ via diagonal morphisms if and only if $x\le c\le a<y\le d\le b<x+2\pi$ (for some choice of liftings of $c,d$ to $\RR$). $f\oplus g$ factors through $E(c,d)$ with counter-diagonal morphisms iff the other condition holds. Since the intervals $[x,a], [y,b]$ are disjoint on the circle $S^1$, $f\oplus g$ cannot factor through a projective-injective $P_{[z]}^2$. Therefore, $f\oplus g$ is not stably trivial. Since $r=s$, the stable hom set $\underline\cF_\pi(E(x,y),E(a,b))$ is one dimensional and generated by the morphism $f\oplus g$ with $r=s=1$ as claimed.
\end{proof}


The standard definition of $Ind\,\cF_\pi$ would choose one object from each isomorphism of indecomposable objects of $\cF_\pi$ and the space of such objects has a natural topology as a compact Moebius band (with boundary points being the projective-injective objects). However, we require \emph{two objects} in each isomorphism class of indecomposable objects for the following reason.

\begin{proposition}\label{two fold cover} Let $\underline{\cF}$ denote the stable category of the Frobenius category $\cF=add\,Ind\,\cF_\pi$. If $char(K)\neq 2$, it is not possible to put a continuous triangulated structure on $\underline\cF$ in such a way that the subcategory of indecomposable objects has the natural topology as an open Moebius band.
\end{proposition}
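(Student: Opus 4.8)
The strategy is to show that the indecomposable objects of $\underline{\cF}$, with the open Moebius band topology, carry a continuously varying family of nontrivial endomorphisms coming from the shift $\Sig$, and that a continuous triangulated structure would force a contradiction with the fact that the Moebius band is non-orientable. First I would identify the shift functor $\Sig$ on $\underline{\cF}_\pi$ explicitly on indecomposables: using the cofibration $E(x,y)\cof E(x,x)\oplus E(y,y)\onto E(x+2\pi, y+2\pi)$-type sequences (the projective-injective resolutions built from the objects $E(x,x)=P_{[x]}^2$), one computes that $\Sig E(x,y)\cong E(x',y')$ for an explicit reflection/rotation of the parameters — concretely, the ``rotation by $\pi$'' type map, so that $\Sig^2$ acts as a full rotation and $\Sig$ itself acts as the deck transformation of the orientation double cover of the Moebius band. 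This is where the requirement of \emph{two} objects per isomorphism class becomes essential and is the crux of the whole paper.

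The key point is then that $\Sig$, being a continuous autoequivalence, must restrict to a homeomorphism of the indecomposable object space; but on the open Moebius band $M$ the only relevant self-homeomorphisms permuting isomorphism classes are isotopic either to the identity or to an orientation-reversing involution, and neither can realize the algebraic action of $\Sig$ compatibly with the hom-space data. More precisely, I would use Lemma \ref{third lemma for stable continuous Frob}: the one-dimensional stable hom spaces $\underline\cF_\pi(E(x,y),E(a,b))=K$ vary continuously and come equipped with a canonical generator $f\oplus g$; the triangulated structure requires the boundary maps of distinguished triangles to depend continuously on the objects, and composing around a loop in $M$ that reverses orientation forces the generator to be multiplied by a sign $-1\in K$. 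Since $\mathrm{char}(K)\neq 2$, $-1\neq 1$, giving a discontinuity: the assignment of generator cannot be made continuous and single-valued over the non-orientable base, so no continuous triangulation exists on $\underline\cF$ with this topology.

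Carrying this out concretely, the steps are: (1) compute $\Sig$ on objects and check $\Sig$ is fixed-point-free on the Moebius band, realizing $S^1\times[0,1]\to M$ (or the relevant open analogue) as the orientation double cover with $\Sig$ as the deck transformation; (2) write down the canonical distinguished triangles $E(x,y)\to E(a,b)\to E(c,d)\to \Sig E(x,y)$ exhibited by Lemma \ref{third lemma for stable continuous Frob}(b) and the explicit generators of the stable hom spaces; (3) trace a generator of $\underline\cF_\pi(E(x,y),\Sig E(x,y))$ (or of an adjacent hom space) continuously along a path in $M$ that is an orientation-reversing loop, and show the monodromy multiplies it by $-1$; (4) observe that continuity of the composition map $c$ and of the triangulation forces this $-1$ to equal $1$, contradicting $\mathrm{char}(K)\neq2$.

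The main obstacle I anticipate is step (3): making precise what ``the triangulation varies continuously'' forces on the generators of the one-dimensional hom spaces, and showing the monodromy is genuinely $-1$ rather than some unit that could be absorbed by rescaling. The cleanest way around this is probably to pin down the generators intrinsically — e.g. via the cofibrations $E(x,y)\cof E(\phi^{-1}(y),y)\oplus E(\phi^{-1}(x),x)$ of Theorem \ref{thm:FJ is Frobenius} specialized to $\phi(x)=x+\pi$, whose connecting maps are determined up to a sign by the object data — and then to note that the two obvious choices of lift $E(x,y)$ versus $E(y,x+2\pi)$ of a given isomorphism class differ precisely by that sign, so a single-valued continuous choice over the Moebius band (as opposed to its double cover) is obstructed exactly when $2$ is invertible in $K$.
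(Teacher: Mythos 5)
Your proposal does not close the argument, and it misses the one observation that makes the paper's proof work. The proposition is about $\cF=add\,Ind\,\cF_\pi$, where each isomorphism class of indecomposables has a \emph{unique} representative. Since $\Sig X\cong X$ for every indecomposable $X$, this forces $\Sig X=X$ on the nose — $\Sig$ fixes every point of the Moebius band. Your step (1), which asserts that $\Sig$ is fixed-point-free and realizes the deck transformation of the orientation double cover, is describing the category $\widetilde{Ind}\,\cF_\pi^{top}$ (two objects per class), not the category in the statement; in the one-object model there is no covering space in sight and no "homeomorphism of $M$ isotopic to an orientation-reversing involution" to rule out. From $\Sig X=X$ the paper proceeds very directly: $\Sig(id_X)=id_X$ and $\underline\End(X)\cong K$ is one-dimensional, so $\Sig$ is the identity on every endomorphism space; any nonzero $f:X\to Y$ is joined by a continuous path $f_s:X\to Y_s$ of nonzero morphisms to a nonzero endomorphism of $X$; since $\Sig$ acts on each one-dimensional $\underline\cF(X,Y_s)$ by a scalar and $K$ is discrete, that scalar is locally constant in $s$, hence equal to $1$ everywhere. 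Thus $\Sig=\mathrm{id}$ as a functor, which is incompatible with the triangulated axioms when $char(K)\neq 2$. Note that the relevant topological input is \emph{path-connectivity} of the space of nonzero morphisms out of $X$ together with discreteness of $K$, not non-orientability of the band.

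Your alternative route — monodromy of a chosen generator of the one-dimensional hom spaces around an orientation-reversing loop equals $-1$ — is exactly the step you yourself flag as unresolved, and as written it is a gap, not a proof. You never specify which continuously varying family of morphisms is being transported, why the triangulated structure (rather than mere $K$-linearity of the category) pins down its value after transport, or why the resulting unit is $-1$ rather than something absorbable by a rescaling of generators; Lemma \ref{third lemma for stable continuous Frob}(b) concerns factorizations of morphisms and does not produce the "canonical distinguished triangles" you invoke. The sign $\Sig(b_\e^{XY})=\e\, b_\e^{\Sig X,\Sig Y}$ that you are implicitly chasing only becomes meaningful once one has two representatives per class (where "diagonal" versus "counter-diagonal" is well defined); over the Moebius band itself that dichotomy is not available, which is precisely why the paper abandons the monodromy picture in favor of the connectivity argument above.
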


\begin{proof} This follows from the fact that $X\cong \Sig X$ for all objects $X$ and the fact that, for every nonzero morphism $f:X\to Y$ between indecomposable objects $X,Y$ there is a continuous path $f_s:X\to Y_s$, $0\le s\le 1$ of nonzero morphisms starting with $Y_1=Y,f_1=f$ and ending with $Y_0=X$. 

In $Ind\,\cF_\pi$, there is only one object in each isomorphism class. Therefore $\Sig X=X$. Being a functor, $\Sig $ must send $id_X$ to $id_X$. Being $K$-linear, this implies that $\Sig f=f$ for all endomorphisms $f$ of $X$. For any nonzero morphism $f:X\to Y$, choose a continuous path $f_s:X\to Y_s$ from $f_1=f$ to $f_0:X\to Y_0=X$. The operator $\Sig $ acts as a linear automorphism of $\underline\cF(X,Y_s)\cong K$. When $s=0$, $\Sig $ is the identity operator. By continuity of $\Sig $, it must be the identity operator on $\underline\cF(X,Y_s)$ for all $0\le s\le 1$. Therefore, $\Sig $ is the identity functor. This contradicts the axioms of a triangulated category when $char(K)\neq2$.
\end{proof}

This means  we need a 2-fold covering space of the Moebius band. Up to isomorphism there are two choices. For purely esthetic reasons, we choose the connected covering given by the following definition. (Remark \ref{another two fold cover} gives the other choice.)

\begin{definition}\label{defn: tilde Ind Ftop}
Let $\widetilde{Ind}\,\cF_\pi^{top}$ be the topological $R$-category whose object set is the set of all $E(x,y)$ with $x\le y\le x+2\pi$. This has two objects in every isomorphism class of indecomposable objects in $\cF_\pi$ since $E(x,y)\cong E(y,x+2\pi)$. We give this set the topology as a quotient space of a subspace of the plane:
\[
	Ob(\widetilde{Ind}\,\cF_\pi^{top})=\{(x,y)\in \RR^n\ |\ x\le y\le x+2\pi\}/\sim
\]
where the equivalence relation is $(x,y)\sim (x+2\pi n,y+2\pi n)$ for all $n\in \ZZ$. This is a compact Hausdorff space homeomorphic to the Cartesian product $S^1\times [0,2\pi]$.

The space of morphisms $Mor(\widetilde{Ind}\,\cF_\pi^{top})$ is not important but we specify it for completeness. It is the quotient space of the space of all 6-tuples $(r,s,(x_1,x_2),(y_1,y_2))\in R^2\times\RR^4$ satisfying the following closed conditions:
(1) $x_1\le x_2\le x_1+2\pi$,
(2) $y_1\le y_2\le y_1+2\pi$,
(3) Either $r=0$ or $y_1\ge x_1$ and $y_2\ge x_2$,
(4) Either $s=0$ or $y_2\ge x_1$ and $y_1+2\pi\ge x_2$.
The equivalence relation is given by $(r,s,(x_1,x_2),(y_1,y_2))\sim(r,s,(x_1+2\pi n,x_2+2\pi n),(y_1+2\pi n,y_2+2\pi n))$ for all $n\in\ZZ$ and $(r,s,(x_1,x_2),(y_1+2\pi,y_2+2\pi))\sim (tr,ts,(x_1,x_2),(y_1,y_2))$. The morphism $(r,s,X,Y)$ represents $r$ times the basic diagonal morphism $X\to Y$ plus $s$ times the basic counter-diagonal morphism. (Morphisms $X\to Y$ are given by $2\times 2$ matrices.)
\end{definition}

\begin{definition}
We define the topological Frobenius category 
$
 	\cF_\pi^{top}$ to be $add\,\widetilde{Ind}\,\cF_\pi^{top}
$
with topology given by Definition \ref{topology of add D}. The stable category $\underline\cF_\pi^{top}$ is given the \emph{quotient topology} defined as follows. The object space of $\underline\cF_\pi^{top}$ is equal to the object space of $\cF_\pi$ with the same topology. The morphism space $Mor(\underline\cF_\pi^{top})$ is the quotient space of $Mor(\cF_\pi^{top})$ modulo the standard equivalence relation reviewed above with the quotient topology.  Define the topological category $\underline{\widetilde{Ind}\,\cF}_\pi^{top}$ to be the image of ${\widetilde{Ind}\,\cF}_\pi^{top}$ in $\underline\cF_\pi^{top}$.
\end{definition}

\begin{proposition} The inclusion functor $\cF_\pi^{top}\to \cF_\pi$ is an equivalence of Frobenius categories. As topological categories, $add\,\underline{\widetilde{Ind}\,\cF}_\pi^{top}$ and $\underline\cF_\pi^{top}$ are isomorphic.
\end{proposition}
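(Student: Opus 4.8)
The plan is to prove the two assertions separately, each by reduction to results already in hand. Write $\cD=\widetilde{Ind}\,\cF_\pi^{top}$, so that $\cF_\pi^{top}=add\,\cD$; note that, algebraically, $\cD$ is nothing but the full subcategory of $\cF_\pi$ on the objects $E(x,y)$ with $x\le y\le x+2\pi$, the $6$-tuple description of $Mor(\cD)$ in Definition \ref{defn: tilde Ind Ftop} being a coordinatization of $\cF_\pi(E(x,y),E(a,b))$ by the scalars of its diagonal and counter-diagonal parts (the vanishing conditions being exactly those forced by $fd=df$ and the depth function). For the first assertion I would check that the inclusion $\cF_\pi^{top}\to\cF_\pi$ is fully faithful and essentially surjective. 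Full faithfulness: for formal sums $\bigoplus X_j,\bigoplus Y_i$ the hom-set in $add\,\cD$ is by construction the set of matrices $(f_{ij})$ with $f_{ij}\in\cD(X_j,Y_i)$, which, since $\cD\hookrightarrow\cF_\pi$ is a full subcategory, is precisely $\cF_\pi(\bigoplus X_j,\bigoplus Y_i)$. Essential surjectivity: by Theorem \ref{thm: indec objects of ccc} the category $\cF_\pi$ is Krull--Schmidt with every indecomposable isomorphic to some $E(x,y)$, so every object is isomorphic to the image of a formal sum. Finally, the exact structure of $\cF_\pi$ is additive in nature (a morphism is a cofibration exactly when its underlying map of projective representations is a split monomorphism), hence transports along this additive equivalence; the inclusion therefore becomes an exact equivalence, and an exact equivalence automatically matches projective--injectives, which are the objects $E(x,x)$ on either side, so it is an equivalence of Frobenius categories (using Theorem \ref{thmxxx}).

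For the second assertion, first the algebra. The stabilization $\cD\to\underline\cD:=\underline{\widetilde{Ind}\,\cF}_\pi^{top}$ induces, by functoriality of $add$, a functor $\Psi\colon\cF_\pi^{top}=add\,\cD\to add\,\underline\cD$, equal to the identity on objects and to the entrywise stable quotient on morphism matrices. The key algebraic point is that a matrix $F=(f_{ij})\colon\bigoplus X_j\to\bigoplus Y_i$ factors through a direct sum of projective--injectives $E(z,z)$ if and only if every entry $f_{ij}$ does: one implication is obtained by assembling chosen factorizations $X_j\to Q_{ij}\to Y_i$ into $\bigoplus X_j\to\bigoplus_{i,j}Q_{ij}\to\bigoplus Y_i$, the other by pre- and post-composing a single factorization of $F$ with the structural inclusions and projections. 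Hence the morphisms killed by $\Psi$ are exactly those factoring through projective--injectives, so $\Psi$ descends to a functor $\overline\Psi\colon\underline\cF_\pi^{top}\to add\,\underline\cD$ which is bijective on objects and on morphisms.

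It remains to promote $\overline\Psi$ to a homeomorphism on object and on morphism spaces. On objects both sides carry the same space $\bigsqcup_n Ob(\cD)^n$ with the same topology, since stabilization does not alter objects. On morphisms $\overline\Psi$ is continuous because $\Psi$ is and $\underline\cF_\pi^{top}$ carries, by definition, the quotient topology from $\cF_\pi^{top}$; so what must be shown is that $\Psi$ is itself a quotient map on morphism spaces. Now $Mor(\cF_\pi^{top})=\bigsqcup_{n,m}S_{n,m}$, where $S_{n,m}\subseteq Ob(\cD)^m\times Mor(\cD)^{nm}\times Ob(\cD)^n$ is the subspace cut out by $s(f_{ij})=X_j$, $t(f_{ij})=Y_i$ — closed because $Ob(\cD)\cong S^1\times[0,2\pi]$ is Hausdorff — and on $S_{n,m}$ the map $\Psi$ is the restriction of $\mathrm{id}\times q^{nm}\times\mathrm{id}$, where $q\colon Mor(\cD)\onto Mor(\underline\cD)$ is the defining quotient map of the stable morphism space. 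Each $S_{n,m}$ is saturated for the equivalence relation of $\mathrm{id}\times q^{nm}\times\mathrm{id}$, since that relation alters only the $Mor(\cD)^{nm}$-coordinates and stable equivalence preserves the source and target of every entry; as the restriction of a quotient map to a closed saturated subspace is again a quotient map, and a disjoint union of quotient maps is a quotient map, everything reduces to showing that $\mathrm{id}_{Ob(\cD)^m}\times q^{nm}\times\mathrm{id}_{Ob(\cD)^n}$ is a quotient map.

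This is the main obstacle, because a product of quotient maps need not be a quotient map. I would get around it by checking that $q$ is in fact an \emph{open} map. Fiberwise this is clear: by Lemmas \ref{second lemma for stable continuous Frob} and \ref{third lemma for stable continuous Frob} (and the fact that multiplication by $t$ is stably trivial) each stable hom-set $\underline\cF_\pi(E(x,y),E(a,b))$ is the quotient of $\cF_\pi(E(x,y),E(a,b))$ by a submodule containing $t\cdot\cF_\pi(E(x,y),E(a,b))$, so on hom-sets $q$ is modelled on the open surjection $R\to R/\mathfrak m=K$ followed by a quotient of discrete spaces; one then checks, using the local product description of $Mor(\cD)$ over $Ob(\cD)\times Ob(\cD)$, that these fit together into an open map globally. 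Granting this, $q$ open forces $q^{nm}$ open, as a finite product of open surjections is an open surjection, whence $\mathrm{id}\times q^{nm}\times\mathrm{id}$ is an open continuous surjection, hence a quotient map. Therefore $\Psi$ is a quotient map on morphisms, $\overline\Psi^{-1}$ is continuous, and $\overline\Psi$ is the asserted isomorphism of topological categories.
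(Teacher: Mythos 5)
Your treatment of the first statement is essentially the paper's (Krull--Schmidt plus the fact that every indecomposable of $\cF_\pi$ is isomorphic to some $E(x,y)$ with $x\le y\le x+2\pi$), with the worthwhile extra observation that the exact structure and the projective--injectives transport along the additive equivalence. For the second statement you take a genuinely different route. The paper considers the continuous functor $add\,\underline{\widetilde{Ind}\,\cF}_\pi^{top}\to\underline\cF_\pi^{top}$ induced by the inclusion, observes that it is an algebraic isomorphism of categories, and concludes by citing the fact that a continuous bijection from a compact space to a Hausdorff space is a homeomorphism. You instead construct the inverse functor $\overline\Psi$ and prove directly that the stabilization $\Psi$ is a quotient map on morphism spaces, by showing that the fiberwise quotient $q\colon Mor(\cD)\to Mor(\underline\cD)$ is an \emph{open} surjection (modelled on $R\onto R/\mathfrak m$) and that finite products of open surjections are open, hence quotient maps. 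This is more work, but it buys something real: the morphism spaces here are disjoint unions of infinitely many pieces, each carrying a factor of $R$, and $R$ in the $\mathfrak m$-adic topology is compact only when it is complete with finite residue field, so the compactness argument does not apply verbatim to $Mor$; your openness argument sidesteps this and also the standard pitfall that products of quotient maps need not be quotient maps. The one place you wave your hands --- that the fiberwise open maps ``fit together globally'' --- does check out: the saturation of a basic open set $(r+\mathfrak m^n)\times(s+\mathfrak m^n)\times V$ is open over every point of $V$ regardless of the configuration of endpoints (it is the whole fiber where the stable hom vanishes, and otherwise of the form $(r+\mathfrak m)\times R$ or $R\times(s+\mathfrak m)$), so $q^{-1}(q(U))$ is a union of open sets for every open $U$. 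Your remaining reductions (restriction of a quotient map to a closed saturated subspace, disjoint unions of quotient maps, and the entrywise characterization of morphisms factoring through projective--injectives) are all sound.
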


\begin{proof}
The first statement follows from the fact that both categories are Krull-Schmidt and every indecomposable object of $\cF_\pi$ is isomorphic to some object in $\widetilde{Ind}\cF_\pi^{top}$. The continuous isomorphism $F:add\,\underline{\widetilde{Ind}\,\cF}_\pi^{top}\to \underline\cF_\pi^{top}$ is induced by the continuous inclusion functor $\underline{\widetilde{Ind}\,\cF}_\pi^{top}\into \underline\cF_\pi^{top}$. The functor $F$ is an isomorphism since $\underline\cF_\pi^{top}$ has the same space of objects as $add\,\widetilde{Ind}\, \cF_\pi^{top}$. Using the fact that any continuous bijection from a compact space to a Hausdorff space is a homeomorphism, we conclude that $F$ is a topological isomorphism of categories.
\end{proof}

By definition the topological stable category $\underline\cF_\pi^{top}$ has an infinite number of (isomorphic) zero objects. When we identify all of these objects to one point and take the quotient topology, we get the continuous cluster category. 

\begin{definition}\label{defn: continuous cluster category} For any field $K$ the \emph{continuous cluster category} $\cC_\pi$ is the additive category generated by the $K$-category $\widetilde{Ind}\,\cC_\pi$ defined as follows.

The \emph{object set} of $\widetilde{Ind}\,\cC_\pi$ will be the set of all ordered pairs of distinct points in $S^1$. Objects are labeled by pairs of real numbers $(x,y)$ with $x<y<x+2\pi$ with $(x,y)=(x+2\pi n,y+2\pi n)$ for all integers $n$.

\emph{Morphism set}: For any two objects $X,Y$ in $\widetilde{Ind}\,\cC_\pi$, we define $\cC_\pi(X,Y)$ to be a one-dimensional vector space if the coordinates $X=(x_0,x_1), Y=(y_0,y_1)$ can be chosen to satisfy either $x_0\le y_0<x_1\le y_1<x_0+2\pi$ or $x_0\le y_1<x_1\le y_0+2\pi<x_0+2\pi$. We denote the generator of $\cC_\pi(X,Y)$ by $b^{XY}_+$ in the first case and $b^{XY}_-$ in the second case. Nonzero morphisms are $rb^{XY}_\e$ where $r\neq0\in K$ and $\e=\pm$. The composition of morphisms $rb^{XY}_\e:X\to Y$ and $sb^{YZ}_{\e'}:Y\to Z$ is defined to be $rsb^{XZ}_{\e\e'}:X\to Z$ provided that $\cC_\pi(X,Z)$ is nonzero with generator $b^{XZ}_{\e\e'}$. Otherwise the composition is zero. We call the chosen morphisms $b^{XY}_\e$ \emph{basic morphisms}. Any composition of basic morphisms is either basic or zero.
\end{definition}

\begin{center}
\begin{figure}[ht]\label{fig0}
\centering
\subfigure{
       \includegraphics[width=2in]{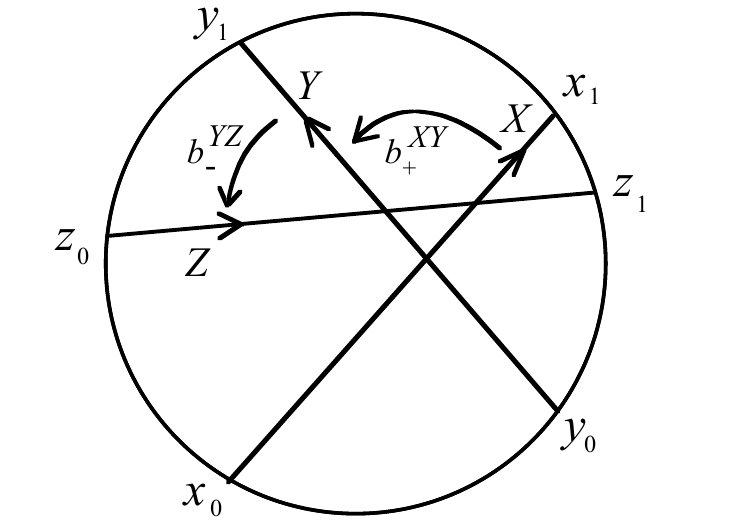}
}
\subfigure{
\setlength{\unitlength}{1in}
{\mbox{
\begin{picture}(2,1.3)
\put(0,.2){
	\put(0,.6){$ b_+^{XY}:X=(x_0,x_1)\to Y=(y_0,y_1)$}
	\put(0,.2){$ b_-^{YZ}:Y=(y_0,y_1)\to Z=(z_0,z_1)$}
	}
\end{picture}}}
} 
\caption{In this figure, $b_+^{XY}$ has subscript $(+)$ since $x_0\le y_0<x_1\le y_1<x_0+2\pi$. This is equivalent to the condition that when $X$ is rotated counterclockwise its orientation matches that of $Y$. The morphism $b_-^{YZ}$ has negative subscript since $y_0\le z_1<y_1\le z_0<y_0+2\pi$ and this is equivalent to saying that the orientation of $Y$ when rotated counterclockwise does not match that of $Z$.
}
\end{figure}
\end{center}

\begin{remark}
The subscript $\e$ in $b_\e^{XY}:X\to Y$ is uniquely determined by the objects $X,Y$. Its purpose is to give the formula for the shift functor in the triangulated structure of the category. We will see later (Remark \ref{rem T(be)=eb}) that $\Sig X=(y,x)$ if $X=(x,y)$ and $\Sig (rb_\e^{XY}):\Sig X\to \Sig Y$ is equal to $\e rb_\e^{\Sig X,\Sig Y}$. \end{remark}


\begin{theorem}\label{thm: stable cat of continuous Frob is cont cluster}
The stable category of the continuous Frobenius category $\cF_\pi$ is equivalent, as a $K$-category, to the continuous cluster category $\cC_\pi$, i.e.
$\underline\cF_\pi\approx \cC_\pi$.
\end{theorem}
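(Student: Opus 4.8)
The plan is to exhibit an explicit equivalence of $K$-categories $\Phi: \underline\cF_\pi \to \cC_\pi$ by matching up objects, hom-spaces, and composition, using the stable-hom computations in Lemmas \ref{second lemma for stable continuous Frob} and \ref{third lemma for stable continuous Frob} as the computational core. Since both $\underline\cF_\pi$ and $\cC_\pi$ are additive $K$-categories built by $add$ from a skeleton of indecomposable objects, it suffices by the standard reduction to define $\Phi$ on the full subcategory of indecomposables and check it is fully faithful and essentially surjective there; the rest follows formally. On objects I would send $E(x,y)$ with $x < y < x+2\pi$ to the object $(x,y) \in \widetilde{Ind}\,\cC_\pi$, and send the projective-injectives $E(x,x)$ to the zero object. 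This is well-defined on isomorphism classes: $E(x,y) \cong E(y,x+2\pi)$ in $\cF_\pi$, and correspondingly $(x,y) = (x+2\pi n, y+2\pi n)$ in $\cC_\pi$, while both sides are invariant under the switch; and the skeleton of nonzero indecomposables of $\underline\cF_\pi$ is exactly $\{E(x,y) : 0 \le x < y < 2\pi\}$ by Theorem \ref{thm: indec objects of ccc} and its corollary, in bijection with ordered pairs of distinct points of $S^1$, which is the object set of $\widetilde{Ind}\,\cC_\pi$. So essential surjectivity is immediate.

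For full faithfulness, I would compute $\underline\cF_\pi(E(x,y), E(a,b))$ for all configurations of the two pairs of points on the circle and compare with $\cC_\pi((x,y),(a,b))$. Up to the symmetry $E(a,b) \cong E(b,a+2\pi)$, reorganizing the four points around the circle reduces to the cases already handled: if the intervals $[x,y]$ and $[a,b]$ are disjoint on $S^1$ (equivalently $x < y \le a < b < x+2\pi$ after suitable lifting), Lemma \ref{second lemma for stable continuous Frob} gives $\underline\cF_\pi = 0$, matching the fact that $\cC_\pi((x,y),(a,b))$ is zero unless one of the two nesting/linking conditions of Definition \ref{defn: continuous cluster category} holds; and if they overlap in the linked pattern $x \le a < y \le b < x+2\pi$, Lemma \ref{third lemma for stable continuous Frob}(a) gives $\underline\cF_\pi(E(x,y),E(a,b)) = K \cdot (f \oplus g)$, exactly one-dimensional, matching the generator $b_+^{XY}$ (and the case where the overlap pattern corresponds to $b_-$ comes from applying the switch isomorphism to one of the objects). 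One must also check the remaining combinatorial configurations — nested intervals, one pair equal to a point, etc. — but each reduces by the circle symmetry and the stable-triviality of multiplication by $t$ (recorded in the display after Lemma \ref{third lemma for stable continuous Frob}'s preamble) to one of the two computed cases or to $0$. So I would define $\Phi$ on a nonzero basic morphism by sending the stable generator $f\oplus g$ (with unit coefficient $r=s=1$) to the basic morphism $b_\e^{XY}$, with $\e$ read off from the configuration, and extend $K$-linearly.

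The last thing to verify is that $\Phi$ is a functor, i.e. compatible with composition. Here I would use Lemma \ref{third lemma for stable continuous Frob}(b): a composite of two nonzero basic stable morphisms $E(x,y) \to E(a,b) \to E(c,d)$ is computed by composing the diagonal representatives $f\oplus g$, and the lemma's factorization criterion tells us precisely when the composite is again a nonzero basic morphism (the intervals must line up transitively) versus when it factors through a projective-injective and is hence zero — and this matches verbatim the composition rule for the $b_\e^{XY}$ in Definition \ref{defn: continuous cluster category}, including the sign rule $\e \e'$ for the subscripts (the diagonal-vs-counterdiagonal bookkeeping in the proof of Lemma \ref{third lemma for stable continuous Frob} is exactly what produces the product of signs). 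Finally, since $\Phi$ is fully faithful and essentially surjective on indecomposables, sends zero objects to zero, and is additive by construction, the induced functor $add\,\Phi : \underline\cF_\pi \to \cC_\pi$ is an equivalence of $K$-categories.

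The main obstacle I anticipate is not any single hard step but the bookkeeping of \emph{all} the circular configurations of two ordered pairs of points and verifying the sign rule $\e\e'$ is respected under composition in every case — in particular keeping track of which of $E(a,b)$ or $E(b,a+2\pi)$ one uses as the representative, since an innocuous-looking application of the switch isomorphism $E(a,b)\cong E(b,a+2\pi)$ can flip a $b_+$ to a $b_-$, and one must confirm this flipping is consistent with the shift-functor description $\Sig(x,y)=(y,x)$ previewed in the remark after Definition \ref{defn: continuous cluster category}. Everything else is a routine unwinding of Lemmas \ref{second lemma for stable continuous Frob} and \ref{third lemma for stable continuous Frob}.
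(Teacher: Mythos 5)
Your proposal follows essentially the same route as the paper's proof: reduce to the full subcategory of indecomposables via $add$, send $E(x,y)$ to $(x,y)$, and invoke Lemmas \ref{second lemma for stable continuous Frob} and \ref{third lemma for stable continuous Frob} to match hom-spaces and composition (including the diagonal versus counter-diagonal sign bookkeeping). The only cosmetic difference is that the paper keeps \emph{two} representatives $E(x,y)$ and $E(y,x+2\pi)$ in each isomorphism class so that the object map is an actual bijection onto ordered pairs, whereas your skeleton $\{E(x,y):0\le x<y<2\pi\}$ keeps one per class -- this is not in bijection with ordered pairs as you assert, but it is harmless since $(x,y)\cong(y,x)$ in $\cC_\pi$, so essential surjectivity still holds.
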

\begin{proof}
Since $\underline\cF_\pi\cong \underline\cF_\pi^{top}=add\,\underline{\widetilde{Ind}\,\cF}_\pi^{top}$ and $\cC_\pi=add\,\widetilde{Ind}\,\cC_{\pi}$, it suffices to show that $\underline{\widetilde{Ind}\,\cF}_\pi^{top}$ is equivalent to $\widetilde{Ind}\,\cC_{\pi}$ as $K$-categories. We will show that the full subcategory $\underline{\widetilde{Ind}_\ast\cF}_\pi$ of nonzero objects of $\underline{\widetilde{Ind}\,\cF}_\pi^{top}$, i.e., those $E(x,y)$ with $0\le x<2\pi$ and $x< y<x+2\pi$, with no topology, is isomorphic to $\widetilde{Ind}\,\cC_\pi$. 
An isomorphism 
\[
	\Psi:\underline{\widetilde{Ind}_\ast\cF}_\pi\xrarrow\cong \widetilde{Ind}\,\cC_\pi
\] 
is given on objects by $\Psi E(x,y)=(x,y)\in S^1\times S^1$ considered as an object of $\widetilde{Ind}\,\cC_\pi$. This is a bijection on objects.
By Lemma \ref{second lemma for stable continuous Frob} and Lemma \ref{third lemma for stable continuous Frob}(a), $\Psi$ is an isomorphism on Hom sets. By Lemma \ref{third lemma for stable continuous Frob}(b), the composition of two basic morphisms $E(x,y)\to E(c,d)\to E(a,b)$ is a nonzero basic morphism if and only if $x\le c\le a<y\le d\le b<x+2\pi$ (for some choice of liftings to $\RR$) or the analogous condition holds with $(c,d)$ replaced by $(d,c+2\pi)$ or with $(a,b)$ replaced by $(b,a+2\pi)$. So, $\Psi$ respects composition. It is an isomorphism of $K$-categories. 
\end{proof}

\begin{remark}\label{topology of Cpi}
The functor $\Psi$ extends to an equivalence of categories $\underline\cF_\pi^{top}\to \cC_\pi$ which is surjective on object and morphism sets. We give $\cC_\pi$ the quotient topology. Readers familiar with topology will recognize that this is the James construction making the object set of $\cC_\pi$ homotopy equivalent to the connected space $\Omega\Sigma(S^2\vee S^1)$. (See \cite{Hatcher}, p. 224.) However, the object set of $\underline\cF_\pi^{top}$ is disconnected since, by definition, it is the same as the object set of $\cF_\pi^{top}=add\,\widetilde{Ind}\cF_\pi^{top}$ which is defined as a disjoint union (Definition \ref{topology of add D}).
\end{remark}


\subsection{{Stable categories $\underline\cF_c$ and $\underline\cF_\phi$ and the continuous categories $\cC_c$ and $\cC_\phi$}}\ 

\noindent The Frobenius categories $\cF_c$ and $\cF_\phi$ were defined and studied in the previous section. Here we define  categories $\cC_c$ and $\cC_\phi$ and show that they are equivalent to the stable categories of $\cF_c$ and $\cF_\phi$. The  categories $\cC_c$ and $\cC_\phi$ are continuous triangulated categories, however they are not necessarily cluster categories.  

In a subsequent paper \cite{IT10} we show that $\cC_c$ has cluster structure precisely when $c=(n+1)\pi/(n+3)$ for $n\in\NN$. In that case every cluster is finite and each cluster is contained in a thick subcategory of $\cC_c$ which is equivalent to the cluster category of type $A_n$. The $\cC_\phi$ construction is more versatile and has a cluster structure whenever $\phi$ has fixed points. For example, if $\phi$ has exactly one fixed point then $\cC_\phi$ contains the cluster category of type $A_\infty$ as a thick subcategory. Furthermore, $\cC_\phi$ contains an $m$-cluster category of type $A_\infty$ as a thick subcategory if $\phi$ has exactly $m$ points of finite period, say $x_i=\phi^i(x_0)$ with $x_m=x_0$, $m\ge3$ and so that $x_0,y,\phi^m(y),x_1$ are in cyclic order for all $y$ between $x_0$ and $x_1$. These clusters and $m$-clusters of type $A_\infty$ will have an infinite number of objects and will be explored in \cite{IT11}.

Since $\cC_c$ are special cases of $\cC_\phi$, we give the definition only in the second case.

\begin{definition} Let $K$ be a field and $\phi$ a $2\pi$-periodic homeomorphism of $\RR$ as in Definition \ref{def: Phi and F-Phi}. The \emph{continuous category} $\cC_\phi$ is defined to be the additive category generated by $\widetilde{Ind}\,\cC_\phi$ where $\widetilde{Ind}\,\cC_\phi$ denotes the category with:\\
1.  Objects are ordered pairs of points $(x_0,x_1)$ in $S^1$ so that $x_0\le \phi(x_0)\le x_1\le \phi^{-1}(x_0+2\pi)$.
\\
2. Morphisms are given by
\[
	\cC_\phi(X,Y)=\begin{cases} Kb^{XY}_+ & \text{if } x_0\le y_0<\phi^{-1}(x_1)\le x_1\le y_1<\phi^{-1}(x_0+2\pi)\\
	Kb^{XY}_- & \text{if } x_0\le y_1<\phi^{-1}(x_1)\le x_1\le y_0+2\pi<\phi^{-1}(x_0+2\pi)\\
  0  & \text{if the elements of $X,Y$ do not lift to such real numbers}
    \end{cases}
\]
3. Composition of morphisms is given by 
\[
	rb^{YZ}_{\e}\circ sb^{XY}_{\e'}=\begin{cases} rsb^{XZ}_{\e\e'} & \text{if } \cC_c(X,Z)=Kb^{XZ}_{\e\e'}\\
  0  & \text{otherwise}
    \end{cases}
\]
For $0<c\le\pi$ the category $\cC_c$ is defined to be $\cC_\phi$ in the case when $\phi(x)=x+\pi-c$ for all $x\in\RR$. The topology on these categories is given in Definition \ref{def: topology of Cc,Cphi}.
\end{definition}

\begin{proposition}
The stable category of the Frobenius category $\cF_\phi$ is equivalent to the category $\cC_\phi$.
\end{proposition}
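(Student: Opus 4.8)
The plan is to mimic the proof of Theorem~\ref{thm: stable cat of continuous Frob is cont cluster}. Since $\cF_\phi$ is Krull--Schmidt (Proposition~\ref{prop: indecomposable objects of F-Phi}), it suffices to construct an isomorphism of $K$-categories $\Psi$ from the full subcategory of $\underline\cF_\phi$ on indecomposable objects to $\widetilde{Ind}\,\cC_\phi$, and then pass to additive hulls. On objects set $\Psi E(x,y)=(x,y)$: by Proposition~\ref{prop: indecomposable objects of F-Phi} the indecomposables of $\cF_\phi$ are exactly the $E(x,y)$ with $\phi(x)\le y\le\phi^{-1}(x+2\pi)$, with the two-fold redundancy $E(x,y)\cong E(y,x+2\pi)$, which matches the object set of $\widetilde{Ind}\,\cC_\phi$. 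By Theorem~\ref{thm:FJ is Frobenius} the projective--injectives are the $E(x,\phi(x))$ (equivalently those $E(x,y)$ with $y=\phi(x)$ or $\phi(y)=x+2\pi$); these become zero in $\underline\cF_\phi$, while $\Psi$ sends them to the degenerate pairs $(x,\phi(x))$ and $(x,\phi^{-1}(x+2\pi))$, which are zero objects of $\cC_\phi$ (their endomorphism $K$-spaces vanish, since the strict inequalities in the definition of $\cC_\phi$ fail on them). Hence it suffices to treat non-projective--injective objects.

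The main work is computing the Hom sets. First I would note that $\underline\cF_\phi(E(x,y),E(a,b))$ is a quotient of $\underline\cF_\pi(E(x,y),E(a,b))$: every object of $\cF_\phi$ embeds via a $\cF_\pi$-cofibration into a projective--injective of $\cF_\phi$ (Theorem~\ref{thm:FJ is Frobenius}), and $\cF_\pi$-projective--injectives are injective for $\cF_\pi$-cofibrations, so any $\cF_\pi$-morphism between objects of $\cF_\phi$ that factors through a $\cF_\pi$-projective--injective also factors through a $\cF_\phi$-projective--injective; in particular multiplication by $t$ is $0$ in $\underline\cF_\phi$. By Lemmas~\ref{second lemma for stable continuous Frob} and~\ref{third lemma for stable continuous Frob}(a), $\underline\cF_\pi(E(x,y),E(a,b))$ is $0$ or one dimensional over $K$, spanned by a basic diagonal or counter-diagonal morphism $f\oplus g$; hence $\underline\cF_\phi(E(x,y),E(a,b))$ is $0$ or $K$, and it vanishes precisely when a nonzero multiple of $f\oplus g$ factors through a projective--injective $\bigoplus_i E(z_i,\phi(z_i))$ of $\cF_\phi$ — which, since $d$ on $E(z,\phi(z))$ links its two summands, one checks amounts to $f\oplus g$ factoring through a single $E(z,\phi(z))$. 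Combining the factorization criterion of Lemma~\ref{third lemma for stable continuous Frob}(b) with the characterization of $\cF_\phi$-projective--injectives in Theorem~\ref{thm:FJ is Frobenius} ($E(c,d)$ is such iff $d=\phi(c)$ or $c+2\pi=\phi(d)$), I expect the following: in the diagonal position $x\le a<y\le b<x+2\pi$, the morphism $f\oplus g$ factors through some $E(z,\phi(z))$ iff $\phi(a)\ge y$ or $\phi(b)\ge x+2\pi$. Thus $\underline\cF_\phi(E(x,y),E(a,b))\cong K$ exactly when $x\le a<y\le b<x+2\pi$ together with $\phi(a)<y$ and $\phi(b)<x+2\pi$; as $\phi^{-1}(s)\le s$ for all $s$, this is precisely the condition $x\le a<\phi^{-1}(y)\le y\le b<\phi^{-1}(x+2\pi)$ that defines $\cC_\phi(\Psi E(x,y),\Psi E(a,b))\cong K$ with generator $b_+$. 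Replacing $E(a,b)$ by the isomorphic $E(b,a+2\pi)$ gives the $b_-$ case symmetrically, so $\Psi$ is bijective on Hom sets and carries basic morphisms to basic morphisms.

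For composition, I would lift basic morphisms $E(x,y)\to E(c,d)\to E(a,b)$ in $\underline\cF_\phi$ to diagonal/counter-diagonal morphisms of $\cF_\pi$; their composite has the same shape, and by the Hom computation above it represents a basic generator of $\underline\cF_\phi(E(x,y),E(a,b))$ when that group is nonzero and vanishes otherwise, matching the composition law in the definition of $\cC_\phi$ — the $\e\e'$ sign bookkeeping being identical to the $\cF_\pi$ case. Passing to additive hulls and identifying the zero objects then yields $\underline\cF_\phi\approx\cC_\phi$; the case $\cF_c$ is the specialization $\phi(x)=x+\pi-c$, and a topological refinement is obtained exactly as for $\cF_\pi$ (building $\cF_\phi^{top}$ and $\underline\cF_\phi^{top}$ via Definition~\ref{topology of add D}).

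The step I expect to be the main obstacle is pinning down exactly which basic morphisms become zero in $\underline\cF_\phi$: one must enumerate the ways $f\oplus g$ can pass through a projective--injective $E(z,\phi(z))$, keeping track that $E(c,d)$ can be projective--injective for two distinct reasons ($d=\phi(c)$ or $c+2\pi=\phi(d)$) and that for the counter-diagonal generator one argues with $E(b,a+2\pi)$ in place of $E(a,b)$. Getting all these cases to collapse cleanly onto the two inequalities $\phi(a)<y$ and $\phi(b)<x+2\pi$ is the delicate point, whereas the reduction to indecomposables, the quotient-of-$\underline\cF_\pi$ observation, and the composition bookkeeping are routine once Theorem~\ref{thm: stable cat of continuous Frob is cont cluster} and Lemma~\ref{third lemma for stable continuous Frob} are in hand.
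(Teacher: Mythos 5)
Your proposal is correct and takes essentially the same approach as the paper: the paper's own proof consists only of the remark that ``the verification follows the pattern of Theorem~\ref{thm: stable cat of continuous Frob is cont cluster} and is straightforward,'' and your argument is precisely that verification carried out in detail. In particular, your identification of the extra stably-trivial morphisms (the basic morphism $E(x,y)\to E(a,b)$ dies in $\underline\cF_\phi$ exactly when $\phi(a)\ge y$ or $\phi(b)\ge x+2\pi$, via Lemma~\ref{third lemma for stable continuous Frob}(b) and the description of the projective--injectives $E(z,\phi(z))$) is the correct content of that ``straightforward'' step.
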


\begin{proof}
The verification of the proposition follows the pattern of Theorem \ref{thm: stable cat of continuous Frob is cont cluster} and is straightforward.
\end{proof}

\begin{corollary}
For any positive $c<\pi$, the stable category of the Frobenius category $\cF_c$ is equivalent to the category $\cC_c$.
\end{corollary}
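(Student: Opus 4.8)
The plan is to obtain this as the special case $\phi(x)=x+\th$, $\th=\pi-c$, of the preceding Proposition. First I would check that this $\phi$ meets the requirements of Definition \ref{def: Phi and F-Phi}: it visibly satisfies $\phi(x+2\pi)=\phi(x)+2\pi$, and since $0<c<\pi$ we have $0<\th<\pi$, whence $x\le\phi(x)=x+\th<x+\pi$ for all $x\in\RR$. So $\cF_\phi$ and $\cC_\phi$ are both defined for this $\phi$.

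Next I would invoke the two identifications already recorded in the text. On the Frobenius side, the remark following Definition \ref{def: Phi and F-Phi} observes that for $\phi(x)=x+\th$ the requirement that every support interval $[x,y]$ of $d$ contain an interval $[z,\phi(z)]=[z,z+\th]$ amounts to $y-x\ge\th$ for every support interval; by the remark identifying $\delta(d)$ with the minimum length of the support intervals of $d$, this is precisely the condition $\delta(d)\ge\th$ defining $\cF_c$. Hence $\cF_c=\cF_\phi$ as full subcategories of $\cF_\pi$, so in particular their stable categories agree. On the cluster side, $\cC_c$ is by definition $\cC_\phi$ for this same $\phi$. Applying the preceding Proposition, $\underline{\cF_\phi}\approx\cC_\phi$, and substituting the two identifications yields $\underline{\cF_c}\approx\cC_c$.

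There is no real obstacle: all of the content sits in the preceding Proposition (and, behind it, Theorem \ref{thm:FJ is Frobenius} and Theorem \ref{thm: stable cat of continuous Frob is cont cluster}, whose proof scheme that Proposition follows). The only point requiring a moment's care is the boundary behaviour of $\phi$ at the ends of the allowed range of $c$; the strict inequality $\phi(x)<x+\pi$ in Definition \ref{def: Phi and F-Phi} forces $\th<\pi$, i.e.\ $c>0$, while $\phi(x)\ge x$ forces $\th\ge0$, i.e.\ $c\le\pi$, so the hypothesis $0<c<\pi$ is comfortably inside the range where the Proposition applies.
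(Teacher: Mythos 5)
Your proposal is correct and matches the paper's (unstated) intent exactly: the corollary is meant to follow immediately from the preceding Proposition by specializing to $\phi(x)=x+\pi-c$, using the identifications $\cF_c=\cF_\phi$ (noted after Definition \ref{def: Phi and F-Phi}) and $\cC_c=\cC_\phi$ (by definition). Your verification of the hypotheses on $\phi$ and of the range of $c$ is accurate, and nothing further is needed.
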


\begin{definition}\label{def: topology of Cc,Cphi}
The category $\cF_\phi$ is a full subcategory of $\cF_\pi$ and we define $\cF_\phi^{top}$ to be the corresponding topological full subcategory of $\cF_\pi^{top}$. We have functors $\cF_\phi^{top}\to \underline\cF_\phi^{top}\to \cC_\phi$ which are epimorphisms on objects and morphisms. So, we define $\underline\cF_\phi$, $\cC_\phi$ to have the quotient topologies with respect to $\cF_\phi^{top}$. As a special case we obtain the topological categories $\cF_c^{top}$, $\underline\cF_c^{top}$ and $\cC_c$.
\end{definition}

\subsection{Distinguished triangles}\label{ss:distinguished triangles} In order to obtain an explicit triangulation of the stable category of any Frobenius category we need to fix a choice, for each object $X$, of an exact sequence $X\to P\to Y$ where $P$ is projective-injective. In the Frobenius category $\cF_\pi$, for each indecomposable nonprojective object $E(x,y)$ in $\cF_\pi$ we choose the following exact sequence.
\begin{equation}\label{injective envelope sequence of E(x,y)}
	E(x,y) \xrarrow{\small\left[\begin{matrix}{1}\\{-1}\end{matrix}\right]}E(y,y)\oplus E(x,x+2\pi)\xrarrow{[1,1]}E(y,x+2\pi)
\end{equation}
Here all morphisms between indecomposable objects $E(a,b)$ are diagonal (as $2\times 2$ matrices) and labeled by the scalar $r\in R$ indicating that they are $r$ times the basic diagonal morphism.
The middle term is projective-injective. So, this choice defines the shift functor
\[
	\Sig E(x,y)=E(y,x+2\pi)
\]
in the stable category $\underline\cF_\pi$ of $\cF_\pi$. (See \cite{HappelBook} for details.) The functor $\Sig =[1]$ takes basic diagonal morphisms to basic diagonal morphisms and takes basic counter-diagonal morphisms to negative basic counter-diagonal morphisms. For example, a basic counter-diagonal morphism $f:E(x,y)\to E(a,b)$ takes $P_{[x]}$ to $P_{[b]}$ and $P_{[y]}$ to $P_{[a]}$ and therefore induces a map of diagrams:
\[
\xymatrixrowsep{35pt}\xymatrixcolsep{45pt}
\xymatrix{
E(x,y)\ar[d]^1\ar[r]^(.35){\small\left[\begin{matrix}{1}\\{-1}\end{matrix}\right]} &
	E(y,y)\oplus E(x,x+2\pi)\ar[d]^{\small\mat{0&-1\\-1&0}}\ar[r]^(.6){[1,1]} &
	E(y,x+2\pi)\ar[d]^{-1}\\
E(a,b) \ar[r]^(.35){\small\left[\begin{matrix}{1}\\{-1}\end{matrix}\right]}& 
	E(b,b) \oplus E(a,a+2\pi)\ar[r]^(.6){[1,1]}&
	E(b,a+2\pi)
	}
\]
where the horizontal morphisms are all basic diagonal morphisms times the indicated scalars and the vertical morphisms are basic counter-diagonal morphisms times the indicated scalars. For example, the morphism $-1:E(y,y)\to E(a,a+2\pi)$ is the composition of the basic counter-diagonal isomorphism $E(y,y)\to E(y,y+2\pi)$ composed with $-1$ times the basic diagonal morphism $E(y,y+2\pi)\to E(a,a+2\pi)$. This diagram shows that the functor $\Sig $ takes the basic counter-diagonal morphism $E(x,y)\to E(a,b)$ to $-1$ times the basic counter-diagonal morphism $\Sig E(x,y)\to \Sig E(a,b)$.

\begin{remark}\label{rem T(be)=eb}
In terms of the abstractly defined category $\cC_\pi$ of Definition \ref{defn: continuous cluster category}, the functor $\Sig $ acts on objects by $\Sig (x,y)=(y,x)$ and on basic morphisms by $\Sig (b_\e^{XY})=\e b_\e^{\Sig X,\Sig Y}$ since $\e=+$ for diagonal morphisms and $\e=-$ for counter-diagonal morphisms.
\end{remark}

As an example of the general construction and the detailed knowledge we obtain about the triangulated structure of the continuous cluster category, we give the following example of a distinguished triangle in $\cC_\pi$.

\begin{example}\label{eg: distinguished triangle} Take any 6 distinct points $a<b<c<x<y<z<a+2\pi$. 
Let $X=(a,x)$, $Y=(b,z)\oplus(c,y)$. Then any morphism $\underline f:X\to Y$, both of whose components are nonzero, can be completed to a distinguished triangle
\begin{equation}\label{eq:distinguished triangle example}
	X\xrarrow{\underline f}Y\xrarrow{\underline g}Z\xrarrow{\underline h}\Sig X
\end{equation}
if and only if $Z\cong (b,a+2\pi)\oplus (c,z)\oplus (x,y)$. Furthermore, this triangle will be distinguished if and only if the matrices $(f_i),(g_{ji}),(h_j)$ of the morphisms $\underline f,\underline g,\underline h$ satisfy the six conditions listed in the Figure 2 as it is proved below.
\begin{center}
\begin{figure}[ht]\label{fig22}
\centering
\subfigure{
       \includegraphics[width=2in]{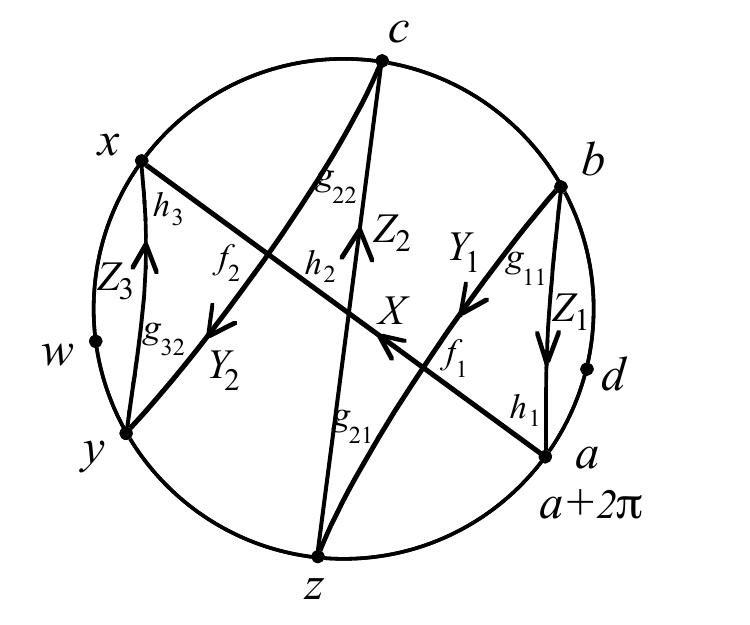}
}
\subfigure{
\setlength{\unitlength}{1in}
{\mbox{
\begin{picture}(1.7,1.3)
\put(.3,.5){
	\put(0,.8){1) \ $ h_1g_{11}f_1=-1$}
	\put(0,.6){2) \ $ h_2g_{21}f_1=1$}
	\put(0,.4){3) \ $ h_2g_{22}f_2=-1$}
	\put(0,.2){4) \ $ h_3g_{32}f_2=1$}
	\put(0,.0){5) \ $ g_{12}=0$}
	\put(0,-.2){6) \ $ g_{31}=0$}
	}
\end{picture}}}
} 
\caption{In the four triangles in the figure, the product of the angles is 1 or -1 depending on the orientation of $X$ around its boundary. $g_{12}=0$ since $Y_2$ does not meet $Z_1$. Similarly, $g_{31}=0$.}
\end{figure}
\end{center}
This is an example of a general procedure for determining which candidate triangles are distinguished. For example, the following statements are proved in \cite{IT1y}:
\begin{enumerate}
\item Given any distinguished triangle in $\cC_\pi$, vertices can be merged together and the result will still be a distinguished triangle. For example $z,a,b$ can be merged to one point, say $a$, and $x,y$ can be merged to one point, say $x$, giving the distinguished triangle:
\begin{equation}\label{dist triangle coming from collapsing example}
	X=(a,x)\xrarrow{f_2} Y_2=(c,x)\xrarrow{g_{22}}Z_2=(a,c)\xrarrow{h_2}\Sig X=(x,a)
\end{equation}
\item Three morphisms $(a,b)\to (b,c)\to (c,a)\to \Sig (a,b)=(b,a)$ with $a,b,c$ in cyclic order on the circle form a distinguished triangle if and only if the product of the scalars corresponding to the three maps is $+1$. If $a,b,c$ are not in cyclic order, the three morphisms form a distinguished triangle if and only if the product of the scalars corresponding to the maps is $-1$. For example, in \eqref{dist triangle coming from collapsing example} we must have $h_2g_{22}f_2=-1$ since $a,x,c$ are not in cyclic order. (The orientation of the two middle terms $Y_2,Z_2$ is irrelevant.)
\end{enumerate}

This example also gives an example of a distinguished triangle in the cluster category $\cC_{A_5}$ of type $A_5$. For this interpretation we need to add the two points $d,w$ in the centers of arcs $Z_1,Z_3$ to make these objects nonzero. Let $\phi$ be an automorphism of the circle sending $a,d,b,c,x,w,y,z,a$ to the next point in the sequence. Then the thick subcategory of $\cC_\phi$ consisting of objects with endpoints in this set of 8 points is equivalent to $\cC_{A_5}$. In this triangulated category, the definition of $\Sig$ is not the same as in $\cC_\pi$. So, we need to replace $X$ with $\tau X=W=(d,w)$. Then $\Sig W=(x,a)$ in $\cC_{A_5}$, and the distinguished triangle is:
\begin{equation}\label{dist triangle in A5}
	W=(d,w)\xrarrow{\underline f'} Y\xrarrow{\underline g} Z\xrarrow{\underline h} \Sig W=(x,a)
\end{equation}
Here $g,h$ are the same as in \eqref{eq:distinguished triangle example}. But $f'$ is the composition of $f:X\to Y$ with the basic map $W\to X$. The coordinates of the matrices of $\underline{f}',\underline g,\underline h$ are the same as for $\underline f,\underline g,\underline h$ and \eqref{dist triangle in A5} is a distinguished triangle if and only if these coefficients satisfy the six conditions in Figure 2.
\end{example}

\begin{proof} By definition the distinguished triangles starting with $\underline f:X\to Y$ is given by lifting $\underline f$ to a morphism in $\cF_\pi$ and taking the pushout of the chosen sequence for $X$:
\[
\xymatrix{
 X=E(a,x)\ar[d]_f\ar[r]^(.4){\small\left[\begin{matrix}{1}\\{-1}\end{matrix}\right]} &
	E(x,x)\oplus E(a,a+2\pi)\ar[d]_u\ar[r]^{[1,1]} &
	\Sig X=E(x,a+2\pi)\ar[d]_=
	\\
 Y=E(b,z)\oplus E(c,y) \ar[r]^(.6)g& 
	Z \ar[r]^h&
	E(x,a+2\pi)
	}
\]
Since the bottom row is an exact sequence in $\cF_\pi$, as an object of $\cP_{S^1}$ we must have $Z= P_{[b]}\oplus P_{[c]}\oplus P_{[x]}\oplus P_{[y]}\oplus P_{[z]}\oplus P_{[a]}$ and these objects must be paired to make 3 indecomposable objects of $\cF_\pi$. One of them must be $E(b,w)$ for some $w$. But the morphism $g$ maps the summand $P_{[b]}$ of $E(b,z)$ isomorphically onto the summand $P_{[b]}$ of $E(b,w)$ and this is only possible if $w=z$ or $w=a+2\pi$. The first case is not possible since $\cC_\pi((c,y),(b,z))=0$ which would force $f_1=0$ contrary to assumption. So, $w=a+2\pi$ and $E(b,a+2\pi)$ is a summand of $Z$. An analogous argument shows that $E(x,y)$ is also a summand of $Z$ and the remaining two points must be paired to give $E(c,z)$ as claimed.

Next we show that the 6 conditions are sufficient to have a distinguished triangle. To do this we first lift the elements $f_i,g_{ji},h_j\in K$ to $R$ so that the 6 conditions are still satisfied. Then we let $u$ be given by the $3\times 2$ matrix with entries in $R$ given by:
\[
	u=\left[
	\begin{matrix}
	0 & g_{11}f_1\\0&0\\g_{32}f_2&0
	\end{matrix}
	\right]
\] 
Then the diagram commutes and $Y\to Z\to \Sig X$ is the pushout of the chosen sequence for $X$ making $X\to Y\to Z\to \Sig X$ a distinguished triangle by definition.

Finally, we prove the necessity of the 6 listed conditions. The last condition follows from the fact that $\underline\cF_\pi(Y_2,Z_1)=0=\underline\cF_\pi(Y_1,Z_3)$. The condition $\underline g\underline f=0$ implies that $g_{21}f_1+g_{22}f_2=0$. So, $(2),(3)$ are equivalent. The condition $\underline h\underline g=0$ implies that $h_1g_{11}+h_2g_{21}=0$ and $h_2g_{22}+h_3g_{32}=0$. So, $(1)-(4)$ are all equivalent. So, it suffices to prove (1).

The composition $X=E(a,x)\xrarrow{f_1} Y_1=E(b,z)\xrarrow{g_{11}} Z_1=E(b,a+2\pi)$ factors through $E(a,a+2\pi)$. Modulo the maximal ideal, the induced morphism $E(a,a+2\pi)\to Z_1$ must be $-g_{11}f_1$ times the basic morphism since the chosen map $X=E(a,x)\to E(a,a+2\pi)$ in \eqref{injective envelope sequence of E(x,y)} is $-1$ times the basic map. However, the composition $E(a,a+2\pi)\to Z_1\xrarrow{h_1} \Sig X$ is equal to the basic map $E(a,a+2\pi)\to \Sig X$ since $Y\to Z\to \Sig X$ is the pushout of \eqref{injective envelope sequence of E(x,y)} by definition of distinguished triangles. Therefore, $h_1(-g_{11}f_1)=1$ proving (1). So, the six conditions are both necessary and sufficient for $X\to Y\to Z\to \Sig X$ to be a distinguished triangle.
\end{proof}

\subsection{Generalizations} The following Proposition is a generalization of our previous construction of Frobenius categories, and the proof is essentially the same as the proof of Theorem \ref{thmxxx}. However we are not able to prove Krull-Schmidt property in this generality. The reason we include this here is the fact that a special case of this more general construction yields a triangulated category which is algebraically triangulated-equivalent to the continuous cluster category $\mathcal C_{\pi}$, however the two categories are not topologically equivalent, which we explain in the Remark below.

\begin{proposition}\label{prop: general Frobenius category} Let $R$ be a discrete valuation ring with uniformizer $t$. Let $\cP$ be any additive Krull-Schmidt $R$-category where the endomorphism rings of indecomposable objects are commutative local $R$-algebras. Let $\cF$ be the category of all pairs $(V,d)$ where $V$ is an object of $\cP$ and $d$ is an endomorphism of $V$ with $d^2$ equal to multiplication by the uniformizer $t\in R$. Take exact sequences in $\cF$ to be sequences $(X,d)\to (Y,d)\to (Z,d)$ where $X\to Y\to Z$ is split exact in $\cP$. Then $\cF$ is a Frobenius category with projective-injective objects the direct summands of $\left(X^2,\small\left[\begin{matrix}0 & t\\1 & 0\end{matrix}\right]\right)$ for $X$ in $\cP$. \qed
\end{proposition}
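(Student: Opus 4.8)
The plan is to reproduce the proof of Theorem~\ref{thmxxx} step by step, checking at each point that only the hypotheses now assumed of $\cP$ are used: additivity, the Krull--Schmidt property, and commutativity and locality of the endomorphism rings of indecomposable objects.

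First I would verify that $\cF$ is an exact category. The key point, exactly as in the lemma preceding the proof that $\cF_\pi$ is exact, is that a morphism $f:(V,d)\to(W,d)$ is a cofibration if and only if $f:V\to W$ is a split monomorphism in $\cP$: if $f$ is split mono then its cokernel $C$ is a direct summand of $W$, hence an object of $\cP$, and since $fd=df$ the endomorphism $d$ descends to $C$ with $d^2=t$, so $(C,d)\in\cF$ and $(V,d)\cof(W,d)\onto(C,d)$ is exact; the converse is immediate from the definition. Granting this, Keller's axioms (E0), (E1), (E2) and their duals follow verbatim from the corresponding facts about split short exact sequences in the additive category $\cP$ (a pushout of a split monomorphism along any morphism is again a split monomorphism, and dually).

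Next I would introduce, for each $X\in\cP$, the object $X^2=\left(X\oplus X,\ \mat{0&t\\1&0}\right)$ and establish the two adjunction isomorphisms $\cF(X^2,(V,d))\cong\cP(X,V)$ via $(g,dg)\leftrightarrow g$ and $\cF((V,d),X^2)\cong\cP(V,X)$ via $(gd,g)\leftrightarrow g$; these are purely formal and identical to the $\cF_\pi$ case. From the first, $X^2$ is projective relative to quotient maps, because a morphism $X\to W$ lifts along the split epimorphism $V\to W$ in $\cP$; from the second, $X^2$ is injective relative to cofibrations, because a morphism $V\to X$ extends along the split monomorphism $V\to W$. Then $\cF$ has enough projectives and injectives, since $(1,d):V^2\onto(V,d)$ is a quotient map and $(d,1):(V,d)\cof V^2$ is a cofibration for every $(V,d)\in\cF$.

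The only step that genuinely departs from the proof of Theorem~\ref{thmxxx} is the final identification of the projective (equivalently injective) objects. There one used the Krull--Schmidt property of $\cF_\pi$ together with the locality of $\End_{\cF_\pi}(P_{[x]}^2)$; here $\cF$ need not be Krull--Schmidt, and $\End_\cF(X^2)$ need not be local in this generality, so that line of reasoning is unavailable. But the weaker conclusion in the statement is exactly what the weaker hypotheses deliver, and it is immediate: any projective $(V,d)$ is a direct summand of $V^2$ because the quotient map $(1,d):V^2\onto(V,d)$ splits, and, writing $V\cong\bigoplus_i X_i$ with each $X_i$ indecomposable (Krull--Schmidt in $\cP$), we have $V^2\cong\bigoplus_i X_i^2$; hence $(V,d)$ is a direct summand of $\left(W^2,\mat{0&t\\1&0}\right)$ with $W=V$, and, being a summand of an injective object, it is itself injective. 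Conversely, any direct summand of such an object is both projective and injective by the two adjunction isomorphisms. Thus the projectives, the injectives, and the direct summands of the objects $\left(X^2,\mat{0&t\\1&0}\right)$ all coincide, so $\cF$ is a Frobenius category. I anticipate no real obstacle here; the one thing to be careful about is not to overreach and claim $\End_\cF(X^2)$ is local --- it is not needed, and the \emph{direct summands} in the statement is precisely what makes the argument go through without it.
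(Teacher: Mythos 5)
Your proposal is correct and follows essentially the same route the paper intends: the paper gives no separate proof, stating only that the argument is the same as for Theorem \ref{thmxxx} except that the Krull--Schmidt property of $\cF$ is unavailable, which is exactly why the conclusion is weakened to ``direct summands of $X^2$''. You correctly identify that the locality of $\End_{\cF}(X^2)$ is not needed for this weaker statement, and the rest of your argument (exactness via split sequences in $\cP$, the two adjunction isomorphisms, and the two-way approximations $(1,d)$ and $(d,1)$) reproduces the paper's proof verbatim.
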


With the following choice of two-way approximation for each object $(V,d)$
\begin{equation}\label{eq: Orlov's definition of triangulated structure}
	(V,d)\xrarrow{\small\left[\begin{matrix}{d}\\{1}\end{matrix}\right]} \left(V^2,\small\left[\begin{matrix}0 & t\\1 & 0\end{matrix}\right]\right)\xrarrow{[-1,d]} (V,-d)
\end{equation}
we get the shift functor $\Sig $ in the stable category $\cC=\underline{\cF}$, to satisfy $\Sig (V,d)=(V,-d)$.

\begin{remark}\label{another two fold cover} The above choice of the two-way approximations comes from \cite{Orlov}. It gives a different topology on the continuous cluster category $C_{\pi}$ defined as follows. Let $\cM$ be the space of all two-element subsets $X=\{x,y\}\subset S^1$. Each $X\in\cM$ corresponds to the indecomposable object $(V,d)$ where $V=P_{[x]}\oplus P_{[y]}$ and $d:V\to V$ is the direct sum of the basic maps $P_{[x]}\to P_{[y]}$ and $P_{[y]}\to P_{[x]}$. These objects form a subspace of the space of objects homeomorphic to $\cM$. The definition $\Sig (V,d)=(V,-d)$ implies that $-d:V\to V$ is the direct sum of $-1$ times the basic maps $P_{[x]}\to P_{[y]}$ and $P_{[y]}\to P_{[x]}$. Since we are taking the discrete topology on the ground field $K$, there is no continuous path from $+1$ to $-1$, if  $char(K)\neq 2$. Therefore, the set of objects $(V,-d)$ forms a disjoint copy of the Moebius band $\cM$.

The space of indecomposable objects for this model of the cluster category $\cC_\pi$ is therefore homeomorphic to the disjoint union of two copies of the open Moebius band, as opposed to the connected (and oriented) two-fold covering which we are using. The reason is that, instead of changing the sign of $d$, we take ordered sets and define the shift $\Sig =[1]$ by changing the order of the letters $x,y$. Since one can go continuously from the ordered set $(x,y)$ to the ordered set $(y,x)$ by rotation of the chord, $(V,d)$ and $\Sig (V,d)$ are in the same connected component of the space of objects in the model for the continuous cluster categories $\cC_c$ constructed and described in detail in this paper.
\end{remark}



\begin{thebibliography}{aa}

\bibitem{BIRSc}
A.~B. Buan, O.~Iyama, I.~Reiten, and J.~Scott, \emph{Cluster structures for 2-{C}alabi-{Y}au categories and unipotent groups}, Compos. Math. \textbf{145} (2009), no.~4, 1035--1079.


\bibitem{BMRRT}
Aslak~Bakke Buan, Robert Marsh, Markus Reineke, Idun Reiten, and Gordana Todorov, \emph{Tilting theory and cluster combinatorics}, Adv. Math. \textbf{204} (2006), no.~2, 572--618.

  
  \bibitem{CCS}
P.~Caldero, F.~Chapoton, and R.~Schiffler, \emph{Quivers with relations arising from clusters ({$A_n$} case)}, Trans. Amer. Math. Soc. \textbf{358} (2006), no.~3, 1347--1364.
  
\bibitem{HappelBook}
Dieter Happel, \emph{Triangulated categories in the representation theory of finite dimensional algebras}, London Math. Soc. Lecture Note Ser., vol. \textbf{119}, Cambridge Univ. Press, Cambridge, 1988.
 
 
 \bibitem{Hatcher} Allen Hatcher, \emph{Algebraic Topology}, Cambridge Univ. Press, Cambridge, 2002





 \bibitem{HJ12}
 T. Holm and P. J\o rgensen, \emph{On a cluster category of infinite Dynkin type, and the relation to triangulations of the infinity-gon}, Math. Z. \textbf{270} (2012), 277--295.
 
 \bibitem{IT10} K. Igusa and G. Todorov, \emph{Continuous cluster categories I}, arXiv:1209.1879.
 
\bibitem{IT1x} K. Igusa and G. Todorov, \emph{Continuous cluster categories II: Continuous cluster-tilted categories}, in preparation.

\bibitem{IT1y} K. Igusa and G. Todorov, \emph{Distinguished triangles in the continuous cluster category}, in preparation.

\bibitem{IT11} K. Igusa and G. Todorov, \emph{Cluster categories coming from cyclic posets}, in preparation.


 
\bibitem{K} B. Keller, \emph{Chain complexes and stable categories}, Manuscripta Math \textbf{67} (1990), 379--417.

\bibitem{KellerReiten} B. Keller and I. Reiten, \emph{Acyclic Calabi-Yau categories, with an appendix by Michel Van den Bergh}, Compos. Math. 144 (2008), 1332--1348.

\bibitem{Ng} P. Ng, \emph{A characterization of torsion theories in the cluster category of Dynkin type {$A_{\infty}$}}, arXiv:1005.4364v1.

\bibitem{Orlov}
D.~O. Orlov, \emph{Triangulated categories of singularities and {D}-branes in {L}andau-{G}inzburg models}, Ð (Russian) Tr. Mat. Inst. Steklova \textbf{246} (2004), 3, 240--262.; transl. Proc. Steklov Inst. Math. \textbf{246} (2004), 3, 227Ð248.
 

\bibitem{vR} Adam-Christiaan van Roosmalen, \emph{Hereditary uniserial categories with Serre duality}, arXiv:1011.6077v1.


\bibitem{W}
Friedhelm Waldhausen, \emph{Algebraic ${K}$-theory of spaces}, Algebraic and Geometric Topology (New Brunswick, N.J., 1983), Lecture Notes in Math \textbf{1126}, Springer, Berlin, 1985, pp.~318--419.
\end{thebibliography}
\end{document}